\newcommand{\eps}{\varepsilon}
\newcommand{\opnm}{\operatorname}
\newcommand{\Bff}{\mathbf}
\newcommand{\BigO}{\mathcal{O}}
\newcommand{\Spec}{\operatorname{Spec}}
\newcommand{\tr}{\operatorname{tr}}
\newcommand{\Op}{\operatorname{Op}}
\newcommand{\ii}{\textnormal{i}}
\newcommand{\dd}{\textnormal{d}}
\newcommand{\ee}{\textnormal{e}}
\newcommand\xqed[1]{%
  \leavevmode\unskip\penalty9999 \hbox{}\nobreak\hfill
  \quad\hbox{#1}}
\newtheorem{theorem}{Theorem}
\newtheorem{proposition}[theorem]{Proposition}
\newtheorem{lemma}[theorem]{Lemma}
\theoremstyle{definition}
\newtheorem{xdefinition}[theorem]{Definition}
\newenvironment{definition}{\begin{xdefinition}}{\xqed{$\triangle$}\end{xdefinition}}
\theoremstyle{remark}
\newtheorem{xremark}[theorem]{Remark}
\newenvironment{remark}{\begin{xremark}}{\xqed{$\triangle$}\end{xremark}}
\newtheorem{xexample}[theorem]{Example}
\newenvironment{example}{\begin{xexample}}{\xqed{$\triangle$}\end{xexample}}
\newtheorem*{acknowledgements}{Acknowledgements}
\numberwithin{equation}{section}
\numberwithin{theorem}{section}
\numberwithin{figure}{section}
\newcommand{\fnJ}[1]{}
\title{The elliptic evolution of non-self-adjoint degree-2 Hamiltonians}
\author{Joe Viola}
\email{Joseph.Viola@univ-nantes.fr}
\address{Laboratoire de Math\'{e}matiques Jean Leray \\ 2 rue de la Houssini\`{e}re \\ Universit\'{e} de Nantes \\ BP 92208 F-44322 Nantes Cedex 3}
\begin{document}

\begin{abstract}
We study the relationship between the classical Hamilton flow and the quantum Schr\"odinger evolution where the Hamiltonian is a degree-2 complex-valued polynomial. When the flow obeys a strict positivity condition equivalent to compactness of the evolution operator, we find geometric expressions for the $L^2$ operator norm and a singular-value decomposition of the Schr\"odinger evolution, using the Hamilton flow. The flow also gives a geometric composition law for these operators, which correspond to a large class of integral operators with nondegenerate Gaussian kernels.
\end{abstract}

\maketitle

\section{Introduction}

We study the Schr\"odinger evolution $\exp(-\ii P)$ where $P$ is the Weyl quantization (Definition \ref{def_Weyl}) of a certain type of degree-2 polynomial. The primary goal of this work is to identify the norm of $\exp(-\ii P)$ as an operator on $L^2(\Bbb{R}^n)$ using the Hamilton flow of its symbol (Theorems \ref{thm_norm_quadratic} and \ref{thm_norm_deg2}), though what we obtain is in fact a decomposition of singular-value type (Theorem \ref{thm_SVD}). We also show that the class of Schr\"odinger evolution operators considered here and in \cite{Aleman_Viola_2014b} corresponds to any strictly positive linear canonical transformation (Proposition \ref{prop_supersymmetric_onto}) and therefore gives a geometric composition law (Theorem \ref{thm_shift_composition}) for a large class of integral operators with nondegenerate Gaussian kernels (Theorem \ref{thm_gaussian_kernels}).

A good example to keep in mind is the shifted harmonic oscillator, considered in \cite[Sec.~VII.D]{Krejcirik_Siegl_Tater_Viola_2014} or \cite{Mityagin_Siegl_Viola_2013}. We write $D_x = \frac{1}{\ii}\partial_x$ and $\Op^w$ for the Weyl quantization. For $b \in \Bbb{R}$, let
\[
	\begin{aligned}
	P_b &= \frac{1}{2}(D_x^2 + x^2 - 2\ii b x - b^2 - 1)
	\\ &= \frac{1}{2}\Op^w(\xi^2 + (x-\ii b)^2 - 1).
	\end{aligned}
\]
In particular, the evolution $\ee^{-\ii (t_1 + \ii t_2)P_b}$ is a bounded operator on $L^2(\Bbb{R})$ if $t_2 < 0$ and $t_1 \in \Bbb{R}$, since the multiplication operator $-2 b x$ is subordinated to the harmonic oscillator $\Re P_b = \frac{1}{2}(x^2 + D_x^2)-1-b^2$. Boundedness for $t_2 < 0$ may also be shown using absolute convergence of the eigenfunction expansion for the evolution \cite{Mityagin_Siegl_Viola_2013}. This subordination argument fails as $t_2 \to 0^-$, and making this precise, in Example \ref{ex_shifted_HO} we compute the norm
\begin{equation}\label{eq_intro_norm_SHO}
	\|\ee^{-\ii(t_1 + \ii t_2)P_b}\|_{\mathcal{L}(L^2(\Bbb{R}))} = \exp\left(\frac{\cos t_1 - \cosh t_2}{\sinh t_2} b^2\right), \quad t_1 \in \Bbb{R}, ~~ t_2 < 0,
\end{equation}
which blows up exponentially rapidly in $1/t_2$ as $t_2 \to 0^-$ if and only if $t_1 \notin 2\pi\Bbb{Z}$. Perhaps more interestingly, we also describe how the norm is a simple consequence of the dynamics on phase space induced by the evolution operator.

The hypotheses used in this paper are satisfied if the quadratic part of the Hamiltonian has negative definite imaginary part, so the reader could skip ahead and substitute this weaker hypothesis in Theorems \ref{thm_norm_quadratic} and \ref{thm_norm_deg2}.

\subsection{Definitions}

To begin, we recall the Weyl quantization; see for instance \cite[Sec.~18.5]{Hormander_ALPDO_3}. 

\begin{definition}\label{def_Weyl}
For a symbol $a \in \mathscr{S}'(\Bbb{R}^{2n})$, the Weyl quantization may be defined weakly for $u, v \in \mathscr{S}(\Bbb{R}^{2n})$ via the formula
\[
	\langle a^w(x,D_x) u, v \rangle = (2\pi)^{-n}\int \ee^{\ii (x-y)\cdot \xi}a\left(\frac{x+y}{2}, \xi\right)u(y)\,\dd y \,\dd \xi.
\]
\end{definition}

We often consider $a(x,\xi)$ a polynomial. In this case, one may obtain $a^w(x,D_x)$ by expanding $a((x+y)/2, \xi)$ and using the rule $x^\alpha \xi^\beta y^\gamma \mapsto x^\alpha D_x^\beta x^\gamma$, where $D_x = -\ii \partial_x$. To find the Weyl quantization of a degree-2 polynomial, the only time where we need to pay attention to the order is in the relation $x_j \xi_j \mapsto \frac{1}{2}(x_j D_{x_j} + D_{x_j}x_j)$.

The Schr\"odinger evolution operators considered are closely linked with complex symplectic linear algebra, made evident in \eqref{eq_Weyl_semigroup} below. Recall the symplectic 2-form on $\Bbb{C}^{2n}$,
\[
	\sigma((x_1, \xi_1), (x_2, \xi_2)) = \xi_1\cdot x_2 - \xi_2 \cdot x_1, \quad (x_1, \xi_1), (x_2, \xi_2) \in \Bbb{C}^{2n}.
\]
A transformation $\Bff{K}$ is canonical if it preserves the symplectic form, $\Bff{K}^*\sigma = \sigma$. We will reserve bold upper-case letters for canonical transformations and bold lower-case letters, such as $\Bff{z} = (x,\xi)$, for vectors in the symplectic vector space $\Bbb{C}^{2n}$. 

Recall also the Hamilton vector field, which may be regarded as a matrix when the Hamiltonian is quadratic:
\[
	\begin{aligned}
	H_q &= \partial_\xi q \partial_x - \partial_x q \partial_\xi
	\\ &= \left(\begin{array}{cc} q''_{\xi x} & q''_{\xi \xi} \\ -q''_{xx} & -q''_{x\xi}\end{array}\right).
	\end{aligned}
\]
The Hamilton flow $\exp H_q$ is always a canonical transformation. We make the following (strict) positivity assumption of Melin and Sj\"ostrand \cite{Melin_Sjostrand_1976} on the Hamilton flow of the quadratic part of our Hamiltonians.

\begin{definition}\label{def_positive_canonical}
A linear canonical transformation $\Bff{K}:\Bbb{C}^{2n} \to \Bbb{C}^{2n}$ is positive if the Hermitian form $\ii \sigma(\overline{\Bff{z}}, \Bff{z})$ increases upon applying $\Bff{K}$ for all $\Bff{z} = (x,\xi) \in \Bbb{C}^{2n}$. Equivalently,
\[
	\ii \left(\sigma(\overline{\Bff{K}\Bff{z}}, \Bff{K}\Bff{z}) - \sigma(\overline{\Bff{z}}, \Bff{z})\right) \geq 0, \quad \forall \Bff{z} \in \Bbb{C}^{2n}.
\]
The transformation $\Bff{K}$ is strictly positive if the inequality is strict for all $\Bff{z} \neq 0$.
\end{definition}

Our positivity assumption is on the flow $\exp H_q$ instead of on the generator $\ii q(x,\xi)$, which is why we say that the evolution is elliptic. We see in Section \ref{ssec_supersymmetric_onto} that strict positivity is a necessary and sufficient condition for defining $\ee^{-\ii Q}$ as a compact operator using the methods of \cite{Aleman_Viola_2014b}, summarized in Section \ref{ssec_Fock}. In particular, the analysis there relies on a hypothesis of supersymmetric structure (Definition \ref{def_supersymmetric}) which always holds when $\exp H_q$ is strictly positive (Proposition \ref{prop_positivity_implies_supersymmetry}).

We emphasize that, in defining the compact operator $\ee^{-\ii P}$ throughout, we do not assume that $\{\ee^{-\ii t P}\}_{t \in [0, 1]}$, defined in the sense of \cite{Aleman_Viola_2014b}, is a family of bounded operators.

\subsection{Results}

With these definitions in hand, we study $\ee^{-\ii P}$ acting on $L^2(\Bbb{R}^n)$ for $P = p^w$ where $p(x,\xi)$ is a degree-two polynomial and the flow of the quadratic part of $p$ is strictly positive. Because this positivity assumption implies that the gradient of the quadratic part is invertible, it suffices to study $p(x,\xi) = q((x,\xi) - \Bff{v})$ for $\Bff{v} \in \Bbb{C}^{2n}$ fixed.

First, in the quadratic case, the $L^2$ operator norm of $\ee^{-\ii Q}$ may be computed from the associated Hamilton flow $\exp H_q$. This result is inspired by \cite[Thm.~4.3]{Hormander_1983}.

\begin{theorem}\label{thm_norm_quadratic}
Let $q:\Bbb{R}^{2n}\to\Bbb{C}$ be a quadratic form for which $\Bff{K} = \exp H_q$ is strictly positive, let $Q = q^w(x,D_x)$, and let $\ee^{-\ii Q}$ be defined as in Section \ref{ssec_Fock}.

Then we may write $\opnm{Spec}\overline{\Bff{K}}^{-1}\Bff{K} = \{\mu_j, \mu_j^{-1}\}_{j=1}^n$, where $\mu_j \in (0,1)$ are repeated for algebraic multiplicity, and
\[
	\|\ee^{-\ii Q}\|_{\mathcal{L}(L^2(\Bbb{R}^n))} = \prod_{j=1}^n \mu_j^{1/4}.
\]
\end{theorem}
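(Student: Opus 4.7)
The strategy is to reduce $T^*T$, where $T = \ee^{-\ii Q}$, to a tensor product of rescaled harmonic oscillators by a metaplectic-type conjugation, and then read off the norm from the standard Mehler spectrum.

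\emph{Step 1 (Canonical transformation of $T^*T$).} Since $q$ is polynomial, $T^* = \ee^{\ii\bar Q}$ corresponds, via the quantization--canonical transformation dictionary of Section \ref{ssec_Fock}, to $\exp H_{-\bar q} = \overline{\Bff{K}}^{-1}$. Thus $T^*T$ corresponds to $\Bff{L} := \overline{\Bff{K}}^{-1}\Bff{K}$, which inherits strict positivity from $\Bff{K}$.

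\emph{Step 2 (Spectral structure of $\Bff{L}$).} A direct calculation shows $\overline{\Bff{L}} = \Bff{L}^{-1}$; combined with canonicity, the eigenvalues come in quartets $\{\mu, \mu^{-1}, \bar\mu, \bar\mu^{-1}\}$. For $\Bff{z}$ with $\Bff{L}\Bff{z} = \mu\Bff{z}$, canonicity of $\overline{\Bff{K}}$ yields
\[
\ii\sigma(\overline{\Bff{K}\Bff{z}}, \Bff{K}\Bff{z}) - \ii\sigma(\overline{\Bff{z}}, \Bff{z}) = (\mu - 1)\,\ii\sigma(\overline{\Bff{z}}, \Bff{z}),
\]
and strict positivity of $\Bff{K}$ forces $\mu \in \Bbb{R}\setminus\{1\}$ (non-real $\mu$ would make the right side non-real or zero). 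A continuity argument --- deforming $\Bff{K}$ inside the set of strictly positive canonical transformations to the identity, where all eigenvalues equal $1$ --- prevents eigenvalues from crossing $0$ or $1$, yielding $\Spec\Bff{L} = \{\mu_j,\mu_j^{-1}\}_{j=1}^n$ with $\mu_j \in (0,1)$.

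\emph{Step 3 (Normal form).} Writing $\mu_j = \ee^{-2\lambda_j}$ with $\lambda_j > 0$, one constructs a complex canonical transformation $\Bff{M}$ with $\Bff{M}^{-1}\Bff{L}\Bff{M}$ block-diagonal with blocks $\operatorname{diag}(\mu_j,\mu_j^{-1})$. Quantizing $\Bff{M}$ in the Fock framework of \cite{Aleman_Viola_2014b} produces a bounded invertible $\mathcal{M}$ on $L^2(\Bbb{R}^n)$ intertwining $T^*T$ with $c \prod_{j=1}^n \ee^{-\lambda_j(y_j^2+D_{y_j}^2)}$ for some scalar $c > 0$. Evaluating both sides on the Gaussian ground state (or matching Fredholm determinants $\det(I-T^*T)$ versus $\det(I-c\prod_j \ee^{-\lambda_j(y_j^2+D_{y_j}^2)})$) pins down $c = 1$.

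\emph{Step 4 (Conclusion).} Each $\ee^{-\lambda_j(y_j^2+D_{y_j}^2)}$ has spectrum $\{\ee^{-\lambda_j(2k+1)} : k \in \Bbb{N}\}$, hence norm $\ee^{-\lambda_j} = \mu_j^{1/2}$; tensoring gives $\|T^*T\| = \prod_j \mu_j^{1/2}$, and extracting a square root yields $\|T\| = \prod_j \mu_j^{1/4}$.

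\textbf{Main obstacle.} The crux is Step~3: complex canonical transformations do not lift to unitary operators on $L^2(\Bbb{R}^n)$, so $\mathcal{M}$ is only bounded invertible and the norm is not automatically preserved under conjugation; controlling the scalar $c$ demands explicit generating-function computations on the Bargmann side. A cleaner alternative worth pursuing is to write the Schwartz kernel of $T$ as a non-degenerate Gaussian (cf.\ Theorem \ref{thm_gaussian_kernels}) and compute $\|T\|$ by symplectically diagonalizing the quadratic form in that kernel, bypassing metaplectic unitarity entirely.
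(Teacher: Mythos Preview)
Your overall architecture matches the paper's: reduce $T^*T$ to the canonical transformation $\Bff{L}=\overline{\Bff{K}}^{-1}\Bff{K}$, find a quadratic generator, and diagonalize to harmonic oscillators. The genuine gap is exactly where you flag it, in Step~3, and it is more serious than a bookkeeping issue with the scalar $c$.

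You propose to quantize a \emph{complex} canonical $\Bff{M}$ diagonalizing $\Bff{L}$ ``in the Fock framework'' to a bounded invertible $\mathcal{M}$. But the Fock construction of Section~\ref{ssec_Fock} does not quantize arbitrary complex canonical maps: it produces $\ee^{-\ii Q}$ for supersymmetric $q$, and this is bounded only when $\exp H_q$ is positive (Proposition~\ref{prop_compact_strictly_pos}). A block-diagonalizing $\Bff{M}$ has no reason to be positive, so $\mathcal{M}$ may not exist as a bounded operator at all. Your proposed remedies (ground-state evaluation, Fredholm determinants) presuppose that $\mathcal{M}$ is already in hand; they do not manufacture it.

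The paper sidesteps this completely. Because $T^*T$ is compact, self-adjoint, and positive, Proposition~\ref{prop_real_log} lets one choose the generator $q_1$ of $\Bff{L}$ so that $-\ii q_1$ is a \emph{real} positive definite quadratic form on $\Bbb{R}^{2n}$. The classical symplectic reduction of such a form to $\sum\frac{\lambda_j}{2}(x_j^2+\xi_j^2)$ is then achieved by a \emph{real} linear canonical $\Bff{U}$, whose metaplectic quantization $\mathcal{U}$ is \emph{unitary}. Now conjugation preserves the norm on the nose, no scalar $c$ appears, and $\|T^*T\|=\|\ee^{-\ii Q_2}\|=\ee^{-\sum\lambda_j}$ follows immediately from the harmonic-oscillator spectrum. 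The key idea you are missing is precisely this: exploit self-adjointness of $T^*T$ to force the diagonalizing transformation to be real, hence unitarily implementable.

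A smaller point: your continuity argument in Step~2 deforms $\Bff{K}$ to the identity ``inside the set of strictly positive canonical transformations,'' but the identity is not strictly positive, so the endpoint lies outside the set and the deformation does not immediately rule out eigenvalues crossing $0$. The eigenvalue computation you give does correctly show $\mu\in\Bbb{R}\setminus\{1\}$; positivity of the eigenvalues then follows more cleanly from the real normal form of $-\ii q_1$ (once you know it is real positive definite, $\opnm{Spec}H_{q_1}\subset\ii\Bbb{R}$ and $\opnm{Spec}\Bff{L}=\opnm{Spec}\exp H_{q_1}\subset(0,\infty)$).
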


This theorem is a straightforward consequence of the beautiful exact classical-quantum correspondence, valid for $Q_j = q_j^w, j=1,2,3,$ with $q_j$ certain complex-valued quadratic forms:
\begin{equation}\label{eq_Weyl_semigroup}
	\exp H_{q_1}\exp H_{q_2} = \exp H_{q_3} \iff \ee^{-\ii Q_1}\ee^{-\ii Q_2} = \pm \ee^{-\ii Q_3}.
\end{equation}
The idea of the proof of Theorem \ref{thm_norm_quadratic}, given in full in Section \ref{ssec_norm_quadratic}, is simple: the operator $(\ee^{-\ii Q})^*\ee^{-\ii Q}$ is associated with the canonical transformation $\overline{\Bff{K}}^{-1}\Bff{K}$. This can be shown to be the Hamilton flow of a quadratic form $q_1$, and since $\overline{\Bff{K}}^{-1}\Bff{K}$ corresponds to a positive definite compact operator, we may take $\ii q_1$ real positive definite. Writing $Q_1 = q_1^w$, by \eqref{eq_Weyl_semigroup}, $\ee^{-\ii Q_1} = (\ee^{-\ii Q})^*\ee^{-\ii Q}$. The spectrum of $\exp H_{q_1} = \overline{\Bff{K}}^{-1}\Bff{K}$ gives $\opnm{Spec} Q_1$ and therefore $\|\ee^{-\ii Q_1}\| = \|\ee^{-\ii Q}\|^2$.

In Proposition \ref{prop_classical_quantum}, we establish \eqref{eq_Weyl_semigroup} when the flows $\exp H_{q_j}$ are strictly positive, though many forms of the correspondence are well-known (see for instance \cite[Prop.~5.9]{Hormander_1995}). We extend this relation to polynomials of degree 2 in Theorem \ref{thm_shift_composition} by keeping track of a constant factor associated with phase-space shifts, and this extension allows us to find the norm of any complex shift of a quadratic Hamiltonian with strictly positive Hamilton flow.

\begin{theorem}\label{thm_norm_deg2}
Let $q:\Bbb{R}^{2n}\to\Bbb{C}$ be a quadratic form for which $\Bff{K} = \exp H_q$ is strictly positive, and let the matrix $A = A(\Bff{K})$ be as in \eqref{eq_def_A_matrix}. For $\Bff{v} \in \Bbb{C}^{2n}$, let $p(x,\xi) = q((x,\xi) - \Bff{v})$ and let $P = p^w$ and $Q = q^w$.

Then, for $\ee^{-\ii Q}$ and $\ee^{-\ii P}$ defined as in Section \ref{ssec_Fock},
\begin{equation}\label{eq_deg2_norm_result}
	\|\ee^{-\ii P}\|_{\mathcal{L}(L^2(\Bbb{R}^n))} = \ee^{-\frac{1}{2}\sigma(\Im \Bff{v}, A\Im \Bff{v})} \|\ee^{-\ii Q}\|_{\mathcal{L}(L^2(\Bbb{R}^n))}.
\end{equation}
\end{theorem}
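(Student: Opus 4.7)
The plan is to mirror the proof of Theorem \ref{thm_norm_quadratic} while carefully tracking the phase-space shift, in three moves: absorb $\Re \Bff{v}$ by a unitary conjugation, compute $(\ee^{-\ii P})^* \ee^{-\ii P}$ via the shifted composition law (Theorem \ref{thm_shift_composition}), and identify the resulting scalar factor with the Gaussian in $\Im \Bff{v}$.

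For the first move, the covariance of the Weyl quantization under real symplectic translations provides unitary operators $U_{\Bff{u}}$, $\Bff{u} \in \Bbb{R}^{2n}$, satisfying $U_{\Bff{u}}^{-1} a^w U_{\Bff{u}} = (a(\cdot - \Bff{u}))^w$ for suitable $a$. Applied with $\Bff{u} = \Re \Bff{v}$ and $a(\Bff{z}) = q(\Bff{z} - \ii \Im \Bff{v})$, this yields $\ee^{-\ii P} = U_{\Re \Bff{v}} \ee^{-\ii P_0} U_{\Re \Bff{v}}^{-1}$, where $P_0$ is the Weyl quantization of $q(\cdot - \ii \Im \Bff{v})$. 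Since $U_{\Re \Bff{v}}$ is unitary, $\|\ee^{-\ii P}\|_{\mathcal{L}(L^2)} = \|\ee^{-\ii P_0}\|_{\mathcal{L}(L^2)}$; and since the right-hand side of \eqref{eq_deg2_norm_result} depends on $\Bff{v}$ only through $\Im \Bff{v}$, we may assume $\Bff{v} = \ii \Im \Bff{v}$ is purely imaginary.

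For the second move, I would compute $(\ee^{-\ii P_0})^* \ee^{-\ii P_0}$ using the shifted composition law. The adjoint $(\ee^{-\ii P_0})^*$ has symbol $\overline{q}$ shifted by $-\ii \Im \Bff{v}$, while $\ee^{-\ii P_0}$ has symbol $q$ shifted by $+\ii \Im \Bff{v}$. Theorem \ref{thm_shift_composition} produces a product of the form $C \cdot \ee^{-\ii P_1}$, where the linear part of the affine canonical transformation associated with $P_1$ is $\overline{\Bff{K}}^{-1} \Bff{K}$ exactly as in the quadratic case, and $C = C(\Bff{K}, \Im \Bff{v})$ is the explicit scalar factor tracking the shifts. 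Because $(\ee^{-\ii P_0})^* \ee^{-\ii P_0}$ is positive self-adjoint, the quadratic part of the symbol of $P_1$ is the form $q_1$ from the proof of Theorem \ref{thm_norm_quadratic} (with $\ii q_1$ real positive definite), and its affine fixed point must be real. Applying the first move now to $P_1$ in place of $P$ reduces $\|\ee^{-\ii P_1}\|$ to $\|\ee^{-\ii Q_1}\|$, which equals $\|\ee^{-\ii Q}\|^2$ by the proof of Theorem \ref{thm_norm_quadratic}. Taking square roots gives
\[
\|\ee^{-\ii P}\|_{\mathcal{L}(L^2(\Bbb{R}^n))} = C^{1/2} \|\ee^{-\ii Q}\|_{\mathcal{L}(L^2(\Bbb{R}^n))}.
\]

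The main obstacle is the identification $C^{1/2} = \ee^{-\frac{1}{2}\sigma(\Im \Bff{v}, A \Im \Bff{v})}$. This is a bookkeeping step rather than a conceptual one: the matrix $A = A(\Bff{K})$ of \eqref{eq_def_A_matrix} is designed precisely to encode how the scalar prefactor in Theorem \ref{thm_shift_composition} depends on the translation vectors, and substituting the opposite imaginary shifts $\pm \ii \Im \Bff{v}$ into that composition formula produces a Gaussian in $\Im \Bff{v}$ whose quadratic form must, by construction, be $\sigma(\cdot, A \cdot)$. Carrying out this computation carefully, together with the factor of $1/2$ that arises upon taking the square root of $C$, completes the proof.
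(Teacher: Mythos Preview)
Your strategy is sound and is in fact explicitly noted in the paper as a viable alternative: immediately before the proof, the author remarks that one could prove Theorem~\ref{thm_norm_deg2} by applying Theorem~\ref{thm_shift_composition} to $(\ee^{-\ii P})^*\ee^{-\ii P}$, but chooses a different route ``to avoid a somewhat lengthy and redundant computation.''

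The paper instead establishes the stronger singular-value decomposition $\ee^{-\ii P} = \ee^{\frac{\ii}{2}\sigma(\Bff{v},\,\Bff{a}_2 - \Bff{a}_1)}\mathcal{S}_{\Bff{a}_2}\ee^{-\ii Q}\mathcal{S}_{\Bff{a}_1}^{-1}$ with \emph{real} $\Bff{a}_1,\Bff{a}_2$ (the second half of Theorem~\ref{thm_SVD}). The vectors $\Bff{a}_1,\Bff{a}_2$ are found by matching, on the level of affine canonical transformations, $(\ee^{-\ii P})^*\ee^{-\ii P}$ with $\mathcal{S}_{\Bff{a}_1}(\ee^{-\ii Q})^*\ee^{-\ii Q}\mathcal{S}_{\Bff{a}_1}^{-1}$ and $\ee^{-\ii P}(\ee^{-\ii P})^*$ with $\mathcal{S}_{\Bff{a}_2}\ee^{-\ii Q}(\ee^{-\ii Q})^*\mathcal{S}_{\Bff{a}_2}^{-1}$; the constant is then computed directly from Proposition~\ref{prop_crossing}, not from the full composition law. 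The matrix $A$ of \eqref{eq_def_A_matrix} is \emph{defined} by $\Bff{a}_2 - \Bff{a}_1 = A\Im\Bff{v}$, and the norm formula drops out from $|\ee^{\frac{\ii}{2}\sigma(\Bff{v}, A\Im\Bff{v})}| = \ee^{-\frac{1}{2}\sigma(\Im\Bff{v}, A\Im\Bff{v})}$. This approach yields Theorem~\ref{thm_SVD} as a byproduct; yours gives only the norm.

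The one place where you are too casual is the last step. The matrix $A$ is not ``designed precisely to encode'' the prefactor in Theorem~\ref{thm_shift_composition}; it comes from the paper's $\Bff{a}_1,\Bff{a}_2$. To close your argument you must actually expand $\exp\bigl(\tfrac{\ii}{2}(\sigma(\Bff{v}_1-\Bff{v}_3,\Bff{w}_1)+\sigma(\Bff{u}_2,\Bff{v}_2-\Bff{v}_3))\bigr)$ with $\Bff{v}_1=-\ii\Im\Bff{v}$, $\Bff{v}_2=\ii\Im\Bff{v}$, $\Bff{K}_1=\overline{\Bff{K}}^{-1}$, $\Bff{K}_2=\Bff{K}$, and check it against \eqref{eq_def_A_matrix}. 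Your claim that $\Bff{v}_3$ is real is correct (it follows from $\overline{\Bff{K}_3}^{-1}=\Bff{K}_3$ together with $\overline{\Bff{w}_1}=\Bff{u}_2$), but the identification of the scalar is precisely the ``lengthy computation'' the author elected to bypass, and it does not come for free from the definition of $A$.
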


The method of proof of Theorems \ref{thm_norm_quadratic} and \ref{thm_norm_deg2} gives more information than the operator norm. In fact, one has a reduction to an operator of harmonic oscillator type, similar to \cite[Thm.~2.1]{Aleman_Viola_2014a}. As described in Theorem \ref{thm_SVD}, one may find $\ii Q_2$ of positive harmonic oscillator type, displacements $\Bff{a}_1, \Bff{a}_2 \in \Bbb{R}^{2n}$, unitary metaplectic operators $\mathcal{U}_1, \mathcal{U}_2$, and unitary phase-space shifts $\mathcal{S}_{\Bff{a}_1}, \mathcal{S}_{\Bff{a}_2}$ such that
\[
	\begin{aligned}
	\ee^{-\ii Q} &= \mathcal{U}_2 \ee^{-\frac{\ii}{2} Q_2} \mathcal{U}_1^*,
	\\ \ee^{-\ii P} &= \ee^{\frac{\ii}{2}\sigma(\Bff{v}, \Bff{a}_2 - \Bff{a}_1)}\mathcal{S}_{\Bff{a}_2} \ee^{-\ii Q}\mathcal{S}_{\Bff{a}_1}^*.
	\end{aligned}
\]
Theorems \ref{thm_norm_quadratic} and \ref{thm_norm_deg2} are straightforward corollaries of these decompositions, in particular because $\Bff{a}_2 - \Bff{a}_1 = A\Im \Bff{v}$.

Finally, we note that the geometric meaning associated with Schr\"odinger evolutions may be applied to a broad class of integral operators with Gaussian kernels. By the Mehler formula \cite{Hormander_1995} (see Proposition \ref{prop_Mehler_via_FBI}) and Proposition \ref{prop_Mehler_integrability}, for every $P = \Op^w(q((x,\xi) - \Bff{v}))$ where $q$ is a quadratic form with strictly positive Hamilton flow, there exists some degree-2 polynomial $\varphi(x,y)$ with $\Im \varphi$ positive definite and $\det \varphi''_{xy} \neq 0$ such that $\ee^{-\ii P} = \mathfrak{T}_\varphi$ with
\begin{equation}\label{eq_T_phi}
	\mathfrak{T}_\varphi u(x) = \int \ee^{\ii \varphi(x,y)}u(y)\,dy.
\end{equation}
This association may be reversed.

\begin{theorem}\label{thm_gaussian_kernels}
Let $\varphi:\Bbb{R}^n_x \times \Bbb{R}^n_y \to \Bbb{C}$ be a degree-2 polynomial such that $\Im \varphi''$ is a positive definite $2n\times 2n$ matrix and $\det \varphi''_{xy} \neq 0$. Then there exists a quadratic form $q$ with $\exp H_q$ strictly positive, a vector $\Bff{v} \in \Bbb{C}^{2n}$, and a complex number $c \in \Bbb{C}$ such that, writing $P = \Op^w(q((x,\xi) - \Bff{v})),$ defining $\ee^{-\ii P}$ as in Section \ref{ssec_Fock}, and with $\mathfrak{T}_\varphi$ in \eqref{eq_T_phi},
\[
	\mathfrak{T}_\varphi = c\ee^{-\ii P}.
\]
\end{theorem}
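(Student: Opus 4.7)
The plan is to invert the forward correspondence---the Mehler formula of Proposition \ref{prop_Mehler_via_FBI} together with Proposition \ref{prop_Mehler_integrability}---by reading off from $\varphi$ the geometric data that $(q,\Bff{v})$ must encode. First decompose $\varphi(x,y) = \varphi_2(x,y) + \ell(x,y) + c_0$ into its homogeneous quadratic, linear, and constant parts; the hypotheses $\Im\varphi'' > 0$ and $\det\varphi''_{xy}\neq 0$ depend only on $\varphi_2$. I would then read $\varphi_2$ as the generating function of a linear canonical transformation $\Bff{K}:\Bbb{C}^{2n}\to\Bbb{C}^{2n}$, defined implicitly by
\[
	\Bff{K}:(y,\eta)\mapsto (x,\xi), \qquad \xi = \partial_x\varphi_2(x,y), \quad \eta = -\partial_y\varphi_2(x,y).
\]
The nondegeneracy $\det\varphi''_{xy}\neq 0$ makes this map well-defined and invertible, and $\Bff{K}^*\sigma = \sigma$ follows routinely from the symmetry of $\varphi_2''$.

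Next I would verify that $\Bff{K}$ is strictly positive in the sense of Definition \ref{def_positive_canonical}. Using the implicit formulas for $\Bff{K}$ and the symmetry of $\varphi_2''$, a direct computation reduces $\ii[\sigma(\overline{\Bff{K}\Bff{z}},\Bff{K}\Bff{z}) - \sigma(\bar{\Bff{z}},\Bff{z})]$ to a Hermitian form in $(x,\bar y)$ whose matrix is, up to an invertible change of coordinates, $\Im\varphi_2''$; strict positivity of $\Im\varphi_2''$ then yields strict positivity of $\Bff{K}$. Proposition \ref{prop_supersymmetric_onto}, which realizes every strictly positive canonical transformation as the flow of a quadratic form, produces $q:\Bbb{R}^{2n}\to\Bbb{C}$ with $\exp H_q = \Bff{K}$ and with $\ee^{-\ii Q}$ defined as a compact operator via Section \ref{ssec_Fock}.

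Both $\mathfrak{T}_{\varphi_2}$ and $\ee^{-\ii Q}$ then quantize the same canonical transformation $\Bff{K}$; testing the two against a single Gaussian state (for instance via the FBI transform of Proposition \ref{prop_Mehler_via_FBI}) shows they agree up to a scalar, so $\mathfrak{T}_{\varphi_2} = c_2 \ee^{-\ii Q}$ for some $c_2\in\Bbb{C}$. To absorb the lower-order terms, write $\ell(x,y) = \alpha\cdot x + \beta\cdot y$; then $\mathfrak{T}_\varphi$ differs from $\ee^{\ii c_0}\mathfrak{T}_{\varphi_2}$ only by pre- and post-composition with multiplications by $\ee^{\ii\alpha\cdot x}$ and $\ee^{\ii\beta\cdot y}$, which are (analytic extensions of) unitary phase-space shifts. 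On the other hand, by the identity
\[
	\ee^{-\ii P} = \ee^{(\ii/2)\sigma(\Bff{v},\Bff{a}_2 - \Bff{a}_1)}\mathcal{S}_{\Bff{a}_2}\ee^{-\ii Q}\mathcal{S}_{\Bff{a}_1}^*
\]
recalled after Theorem \ref{thm_norm_deg2}, the operator $\ee^{-\ii P}$ with $P = \Op^w(q((\cdot)-\Bff{v}))$ arises from $\ee^{-\ii Q}$ by precisely the same pattern of shifts, and the resulting assignment $\Bff{v}\mapsto(\alpha,\beta)$ is affine linear with invertible derivative (the nondegeneracy $\det\varphi''_{xy}\neq 0$ reappears). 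Solving this linear system produces $\Bff{v}\in\Bbb{C}^{2n}$, and the remaining scalar factors combine into the required $c$.

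The main obstacle I anticipate is the positivity computation in the second paragraph: converting the matrix inequality $\Im\varphi_2'' > 0$ into the geometric inequality of Definition \ref{def_positive_canonical} requires careful bookkeeping of signs and complex conjugates inside the implicit definition of $\Bff{K}$, and it is the one step where the hypotheses of the theorem enter in an essential way. Everything else is a combination of Proposition \ref{prop_supersymmetric_onto}, the classical--quantum correspondence \eqref{eq_Weyl_semigroup}, and affine bookkeeping.
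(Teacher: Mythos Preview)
Your proposal is essentially correct and follows the same broad outline as the paper's proof: read off the linear canonical transformation $\Bff{K}$ from the quadratic part of $\varphi$, show it is strictly positive, invoke Proposition~\ref{prop_supersymmetric_onto} to produce $q$, then handle the linear part via shifts and the constant at the end. The positivity computation you describe is exactly Proposition~\ref{prop_Gaussian_nondeg_iff_positive} in the paper.

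Two places where the paper is cleaner than your route:

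\textbf{The linear part.} You split $\varphi=\varphi_2+\ell+c_0$ and try to match the complex momentum shifts $\ee^{\ii\alpha\cdot x},\ee^{\ii\beta\cdot y}$ against the real shifts $\mathcal{S}_{\Bff{a}_2},\mathcal{S}_{\Bff{a}_1}^*$ from the SVD identity. That identity has $\Bff{a}_1,\Bff{a}_2\in\Bbb{R}^{2n}$, whereas $(0,\alpha),(0,\beta)$ are complex, so ``the same pattern of shifts'' is not literally true; you would need Proposition~\ref{prop_crossing} to convert complex one-sided shifts into two-sided conjugation. The paper avoids this altogether by working with the \emph{affine} canonical transformation $\Bff{z}\mapsto\Bff{K}_\varphi\Bff{z}+\Bff{w}$ from the outset and simply setting $\Bff{v}=(1-\Bff{K}_\varphi)^{-1}\Bff{w}$, so that $\ee^{-\ii P}$ quantizes the same affine map as $\mathfrak{T}_\varphi$ (see \eqref{eq_Egorov_exp-iP}). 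The invertibility you need here is that of $1-\Bff{K}_\varphi$, which follows from strict positivity, not from $\det\varphi''_{xy}\neq 0$.

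\textbf{The scalar.} Instead of ``testing against a single Gaussian,'' the paper writes $\ee^{-\ii P}$ as an explicit Gaussian integral operator $\mathfrak{T}_\psi$ (Mehler formula plus Lemma~\ref{lem_shift_Egorov}, then integrate out $\xi$ using Proposition~\ref{prop_Mehler_integrability}), and observes that the affine canonical transformation determines $\nabla\psi$ and hence $\psi$ up to an additive constant; thus $\varphi=\psi+c_0$ and $\mathfrak{T}_\varphi=\ee^{\ii c_0}\ee^{-\ii P}$. Your Gaussian test would work, but only once you have already written both operators as Gaussian kernels associated to the same canonical transformation---at which point the paper's argument is shorter.
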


\subsection{Context and plan of the paper}

The primary motivation for this paper is the study of linear perturbations of quadratic operators, particularly to study subelliptic operators as in Examples \ref{ex_Davies} and \ref{ex_shifted_Davies}. The composition formula and the exact formula for the norm draw a sharp contrast with the quadratic case, where the Hamilton flow is enough to completely describe the Schr\"odinger evolution up to sign. Theorems \ref{thm_norm_deg2} and \ref{thm_SVD} show that the complex Hamilton flow nonetheless gives precise information for the evolution, both in terms of the norm and in terms of the dynamics on phase space. The formula is furthermore straightforward to compute and is not limited by an Ehrenfest time or other error term.

Following the ideas of \cite{Hormander_1995} in the quadratic case, we also show that the study of Schr\"odinger evolution operators, as considered in \cite{Aleman_Viola_2014b}, coincides to a large extent with the study of nondegenerate Gaussian kernels studied in, for instance, \cite{Howe_1988, Lieb_1990}. We note, however, that we are only considering the problem on $L^2(\Bbb{R}^n)$, where the fact that the functions maximizing the norm are Gaussians becomes evident from the reduction to a harmonic oscillator model. Here, we focus on what the norm is, where that norm is attained in phase space, and how these objects can be found using elementary symplectic linear algebra.

The association between positive linear canonical transformations and Gaussian kernels is already detailed in \cite{Hormander_1995}, particularly in Theorem 5.12. There, H\"ormander studies this association as an extension from the metaplectic semigroup, defined as the set of Schr\"odinger evolutions of quadratic Hamiltonians with negative semidefinite imaginary parts. The principal novelty of this work is therefore in the extension to polynomials of degree 2, but we also find here that the analysis in \cite{Aleman_Viola_2014b}, inspired by the works of Sj\"ostrand, allows us to directly associate strictly positive canonical transformations to Schr\"odinger evolutions by using a broader definition of the Schr\"odinger evolution. Furthermore, the reduction via an FBI--Bargmann transform leads to simple proofs of established results such as Mehler formulas.

We postpone the limiting case of non-strictly positive canonical transformations for future investigation. For quadratic Hamiltonians (or, equivalently, Gaussian kernels), this limiting case is well-studied, \cite{Hormander_1983, Howe_1988, Hormander_1995}. When considering linear perturbations, strict positivity of the flow assures us the existence of unique maximizers for norms, and allows us to side-step questions of domains for less strongly regularizing operators. This is particularly convenient since we work with shift operators \eqref{eq_def_shift} for complex translations, which are not even defined on arbitrary functions in $\mathscr{S}(\Bbb{R}^n)$. Nonetheless, there is evidence that one could carry the analysis even beyond the set of positive canonical transformations: in Section \ref{ssec_Bargmann_via_Mehler} we observe that the classical Bargmann transform can be formally obtained as a Schr\"odinger evolution $\ee^{-\ii P}$ of an operator $P = p^w$ whose spectrum is $\ii \Bbb{R}$.

The plan of this paper is as follows. In Section \ref{sec_shifts} we introduce phase-space shift operators and prove some associated properties. In Section \ref{sec_proofs_norms}, we prove Theorems \ref{thm_norm_quadratic}, \ref{thm_norm_deg2}, and \ref{thm_SVD}. Section \ref{sec_generators_canonical} serves to collect, re-prove, and extend certain results on the evolution operators considered here, including Mehler formulas and Theorem \ref{thm_gaussian_kernels}. Finally, in Section \ref{sec_app}, we apply Theorems \ref{thm_norm_quadratic}, \ref{thm_norm_deg2}, and \ref{thm_SVD} to some simple concrete models.

\begin{acknowledgements}
The author gratefully acknowledges the support of a d\'el\'egation Centre National de la Recherche Scientifique (CNRS) during the preparation of this manuscript.
\end{acknowledgements}

\section{Shift operators}\label{sec_shifts}

The fundamental tool in introducing linear perturbations will be the phase-space shift operators
\begin{equation}\label{eq_def_shift}
	\mathcal{S}_{(v_x,v_\xi)}u(x) = \ee^{\ii v_\xi\cdot x - \frac{\ii}{2}v_x \cdot v_\xi}u(x-v_x).
\end{equation}
These naturally appear in the proof of Theorem \ref{thm_norm_deg2}, because from Lemma \ref{lem_shift_Egorov}, $\ee^{-\ii P} = \mathcal{S}_{\Bff{v}}\ee^{-\ii Q}\mathcal{S}_{\Bff{v}}^{-1}$.

When $\Bff{v}$ is real, 
\[
	\mathcal{S}_{\Bff{v}} = \Op^w(\ee^{-\ii \sigma(\Bff{z}, \Bff{v})}), \quad \Bff{z} = (x,\xi) \in \Bbb{R}^{2n},
\]
and these operators play a fundamental role in the realization of the Weyl quantization via Fourier decomposition of symbols. When $\Bff{v}$ is not real, $\mathcal{S}_{\Bff{v}}$ is not a bounded operator, even from $\mathscr{S}(\Bbb{R}^n)$ to $\mathscr{S}'(\Bbb{R}^n)$. Nonetheless, we work with operators defined on cores generated by products of polynomials and rapidly decaying Gaussians, as in \cite[Thm.~1.1]{Aleman_Viola_2014b}. The Weyl symbols of these operators are likewise superexponentially decaying in any tubular neighborhood of $\Bbb{R}^n$ along with all their derivatives, shown in Propositions \ref{prop_Mehler_integrability} and \ref{prop_Mehler_via_FBI}. It is on these smooth and rapidly decaying functions and symbols that we perform our computations.

It is straightforward to check that
\begin{equation}\label{eq_shift_composition}
	\mathcal{S}_{\Bff{v}_1}\mathcal{S}_{\Bff{v}_2} = \ee^{\frac{\ii}{2} \sigma(\Bff{v}_1, \Bff{v}_2)}S_{\Bff{v}_1+\Bff{v}_2} = \ee^{\ii \sigma(\Bff{v}_1, \Bff{v}_2)}\mathcal{S}_{\Bff{v}_2}\mathcal{S}_{\Bff{v}_1},
\end{equation}
from which it is obvious that \eqref{eq_Weyl_semigroup} cannot be extended to symbols of degree 2. We also record that
\begin{equation}\label{eq_shift_inverse}
	\mathcal{S}_{\Bff{v}}^{-1} = \mathcal{S}_{-\Bff{v}}
\end{equation}
and
\begin{equation}\label{eq_shift_adjoint}
	\mathcal{S}_{\Bff{v}}^* = \mathcal{S}_{-\overline{\Bff{v}}}.
\end{equation}

For an appropriate (very rapidly decaying) symbol $p:\Bbb{R}^{2n}\to \Bbb{C}$, or for $p:\Bbb{R}^{2n}\to \Bbb{C}$ a polynomial and $p^w$ acting on rapidly decaying functions, it is straightforward to check the following Egorov relations. The two-sided relation \eqref{eq_shift_Egorov} is generally interpreted to mean that $\mathcal{S}_{\Bff{z}_0}$ ``quantizes'' the canonical transformation $\Bff{z} \mapsto \Bff{z} + \Bff{v}$ just as $\ee^{-\ii Q}$ quantizes $\Bff{K}$ in \eqref{eq_semigroup_Egorov}. The simple half-Egorov relations \eqref{eq_shift_Egorov_left} and \eqref{eq_shift_Egorov_right}, however, are quite special and crucial for the analysis which follows. Since we apply this lemma to integrable Gaussian symbols, we make no effort to find an optimal symbol class.

\begin{lemma}\label{lem_shift_Egorov}
Let $a:\Bbb{C}^{2n}\to \Bbb{C}$ be either a polynomial or smooth and rapidly decaying on tubular neighborhoods of $\Bbb{R}^{2n}$ in that, for every $C > 0$, there exists $c > 0$ for which $\ee^{c|\Bff{z}|^2}\partial^\alpha a(\Bff{z}) \in L^\infty(\{|\Im \Bff{z}| < C\})$ for any multi-index $\alpha \in \Bbb{N}^{2n}$. Then
\begin{equation}\label{eq_shift_Egorov_left}
	\mathcal{S}_{\Bff{v}}a^w(x, D_x) = \Op^w\left(a(\Bff{z} - \frac{1}{2}\Bff{v})\ee^{-\ii \sigma(\Bff{z}, \Bff{v})}\right),
\end{equation}
\begin{equation}\label{eq_shift_Egorov_right}
	a^w(x, D_x)\mathcal{S}_{\Bff{v}}^{-1} = \Op^w\left(a(\Bff{z} - \frac{1}{2}\Bff{v})\ee^{\ii \sigma(\Bff{z}, \Bff{v})}\right),
\end{equation}
and
\begin{equation}\label{eq_shift_Egorov}
	\mathcal{S}_{\Bff{v}}a^w(x, D_x) = \Op^w(a(\Bff{z}-\Bff{v}))\mathcal{S}_{\Bff{v}}.
\end{equation}
\end{lemma}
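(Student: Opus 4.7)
My plan is to prove \eqref{eq_shift_Egorov_left} by direct manipulation of the Weyl oscillatory integral, to deduce \eqref{eq_shift_Egorov_right} by an adjoint argument, and then to combine these two half-Egorov relations to obtain the conjugation formula \eqref{eq_shift_Egorov}. For \eqref{eq_shift_Egorov_left}, I expand the right-hand side via Definition \ref{def_Weyl}. With $\Bff{v} = (v_x, v_\xi)$, evaluating the symbol at $\Bff{z} = ((x+y)/2, \xi)$ puts $a((x+y-v_x)/2, \xi - v_\xi/2)\ee^{-\ii \xi \cdot v_x + \frac{\ii}{2}v_\xi \cdot (x+y)}$ inside the integral. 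After the substitution $\xi \mapsto \xi + v_\xi/2$, the symbol becomes $a((x - v_x + y)/2, \xi)$, the $v_\xi$-dependent phases collect into the prefactor $\ee^{\ii v_\xi \cdot x - \frac{\ii}{2}v_x \cdot v_\xi}$ of \eqref{eq_def_shift}, and the $\xi$-oscillation becomes $\ee^{\ii(x - v_x - y)\cdot \xi}$, leaving precisely the Weyl kernel of $a^w u$ evaluated at $x - v_x$, i.e.\ $\mathcal{S}_{\Bff{v}} a^w u(x)$.

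For \eqref{eq_shift_Egorov_right}, I take the adjoint of \eqref{eq_shift_Egorov_left} with $\Bff{v}$ replaced by $\overline{\Bff{v}}$ and $a$ replaced by its conjugate-holomorphic extension $\bar a(\Bff{z}) := \overline{a(\overline{\Bff{z}})}$. By \eqref{eq_shift_inverse} and \eqref{eq_shift_adjoint}, $(\mathcal{S}_{\overline{\Bff{v}}})^* = \mathcal{S}_{-\Bff{v}} = \mathcal{S}_{\Bff{v}}^{-1}$, and since Weyl adjoints correspond to pointwise complex conjugation of the symbol on $\Bbb{R}^{2n}$, the right-hand side conjugates to $a(\Bff{z} - \Bff{v}/2)\ee^{\ii \sigma(\Bff{z}, \Bff{v})}$, which is the symbol in \eqref{eq_shift_Egorov_right}. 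Then \eqref{eq_shift_Egorov} is obtained by applying \eqref{eq_shift_Egorov_right} to $b(\Bff{z}) := a(\Bff{z} - \Bff{v}/2)\ee^{-\ii \sigma(\Bff{z}, \Bff{v})}$: by \eqref{eq_shift_Egorov_left} we have $b^w = \mathcal{S}_{\Bff{v}} a^w$, so \eqref{eq_shift_Egorov_right} yields $\mathcal{S}_{\Bff{v}} a^w \mathcal{S}_{\Bff{v}}^{-1} = \Op^w\!\left(b(\Bff{z} - \Bff{v}/2)\ee^{\ii \sigma(\Bff{z}, \Bff{v})}\right)$, and using $\sigma(\Bff{v}, \Bff{v}) = 0$ the bracketed symbol collapses to $a(\Bff{z} - \Bff{v})$, which is \eqref{eq_shift_Egorov}.

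The main obstacle is justifying the $\xi$-translation above when $\Bff{v}$ is complex, since it is then a contour shift rather than a genuine change of variable. For the Gaussian-type symbols in the regime of the lemma, this is licensed by Cauchy's theorem, as the integrand is holomorphic in $\xi$ and decays superexponentially on the relevant tube. For polynomial $a$, one instead verifies each identity on the dense core of polynomial-times-Gaussian functions from \cite[Thm.~1.1]{Aleman_Viola_2014b}: on this core $\mathcal{S}_{\Bff{v}}$ and $a^w$ both act with outputs depending holomorphically on $\Bff{v}$, and since the identities hold for real $\Bff{v}$ as standard Weyl calculus facts, they extend to all $\Bff{v} \in \Bbb{C}^{2n}$ by analytic continuation.
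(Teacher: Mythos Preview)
Your proof is correct and follows essentially the same approach as the paper: a direct manipulation of the Weyl integral with a $\xi$-translation to obtain \eqref{eq_shift_Egorov_left}, the adjoint trick (which the paper also mentions) for \eqref{eq_shift_Egorov_right}, and combining the two half-Egorov relations for \eqref{eq_shift_Egorov}. The only cosmetic difference is that the paper starts from $\mathcal{S}_{\Bff{v}}a^w u(x)$ and rearranges the phase, whereas you start from the right-hand side; you are also more explicit than the paper about justifying the complex $\xi$-shift via Cauchy's theorem or analytic continuation in $\Bff{v}$.
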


\begin{proof}
By density in $L^2(\Bbb{R}^n)$, it suffices to check the identity for $u$ holomorphic and sufficiently rapidly decaying, for instance, any polynomial times a Gaussian.

We write
\[
	\mathcal{S}_{\Bff{v}}a^w u(x) = (2\pi)^{-n}\int \ee^{\ii(x-v_x-y)\xi + \ii v_\xi x - \frac{\ii}{2}v_x v_\xi}a\left(\frac{x+y}{2}-\frac{v_x}{2}, \xi\right)u(y)\,\dd y \,\dd \xi
\]
and rearrange the phase:
\[
	\ii(x-v_x-y)\xi + \ii v_x x - \frac{\ii}{2} v_x v_\xi =  \ii (x-y)(\xi - \frac{v_\xi}{2}) + \ii v_\xi\frac{x+y}{2}-\ii v_x(\xi + \frac{v_\xi}{2}).
\]
Changing variables in $\xi$ gives \eqref{eq_shift_Egorov_left}. A similar argument, or computing the adjoint $(a^w \mathcal{S}_{\Bff{v}}^{-1})^* = \mathcal{S}_{\overline{\Bff{v}}}\overline{a}^w$, gives \eqref{eq_shift_Egorov_right}. The two together then give \eqref{eq_shift_Egorov}.
\end{proof}

The key to the proof of Theorems \ref{thm_norm_deg2} and \ref{thm_shift_composition} is the observation that, for the Schr\"odinger evolution of a quadratic operator, a one-sided shift is equivalent to a certain two-sided shift up to a geometric factor.

\begin{proposition}\label{prop_crossing}
	Let $q:\Bbb{R}^{2n}\to\Bbb{C}$ be a quadratic form for which $\Bff{K} = \exp H_q$ is strictly positive (Definition \ref{def_positive_canonical}). Let $Q = q^w$ and let $\ee^{-\ii Q}$ be defined as in Section \ref{ssec_Fock}. Fix $\Bff{v}\in\Bbb{C}^{2n}$ and recall the definition \eqref{eq_def_shift} of the phase-space shift operators.
	
	Then, if 
	\[
		\begin{aligned}
		\Bff{u} &= (1-\Bff{K}^{-1})\Bff{v},
		\\ \Bff{w} &= (1-\Bff{K})\Bff{v},
		\end{aligned}
	\]
	then 
	\[
		\mathcal{S}_{\Bff{v}}\ee^{-\ii Q}\mathcal{S}_{\Bff{v}}^{-1} = \ee^{\frac{\ii}{2}\sigma(\Bff{u}, \Bff{v})}\ee^{-\ii Q}\mathcal{S}_{\Bff{u}}^{-1} = \ee^{\frac{\ii}{2}\sigma(\Bff{v}, \Bff{w})}\mathcal{S}_{\Bff{w}}\ee^{-\ii Q}.
	\]
\end{proposition}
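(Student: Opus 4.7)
The plan is to invoke the explicit Gaussian form of the Weyl symbol of $\ee^{-\ii Q}$ and then apply the Egorov formulas of Lemma \ref{lem_shift_Egorov} to reduce the proposition to pointwise identities between Gaussian symbols. By the Mehler formula (Proposition \ref{prop_Mehler_via_FBI}), one has $\ee^{-\ii Q} = \Op^w(g)$ with $g(\Bff{z}) = c_0\exp(-\tfrac{1}{2}\Bff{z}^T M \Bff{z})$, where $M$ is a complex symmetric matrix related to $\Bff{K}$ by a Cayley-transform identity of the form $M(I+\Bff{K}) = 2\ii J(\Bff{K}-I)$, with $J$ the standard symplectic matrix ($\sigma(\Bff{z}_1,\Bff{z}_2) = \Bff{z}_1^T J \Bff{z}_2$). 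Strict positivity of $\Bff{K}$ implies $\Re M$ is positive definite, so $g$ decays superexponentially on every tubular neighborhood of $\Bbb{R}^{2n}$ and the hypothesis of Lemma \ref{lem_shift_Egorov} is met.

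Two applications of \eqref{eq_shift_Egorov} give $\mathcal{S}_{\Bff{v}}\ee^{-\ii Q}\mathcal{S}_{\Bff{v}}^{-1} = \Op^w(g(\Bff{z}-\Bff{v}))$, while \eqref{eq_shift_Egorov_right} and \eqref{eq_shift_Egorov_left} yield
\[
	\ee^{-\ii Q}\mathcal{S}_{\Bff{u}}^{-1} = \Op^w\bigl(g(\Bff{z}-\tfrac{1}{2}\Bff{u})\ee^{\ii\sigma(\Bff{z},\Bff{u})}\bigr), \quad \mathcal{S}_{\Bff{w}}\ee^{-\ii Q} = \Op^w\bigl(g(\Bff{z}-\tfrac{1}{2}\Bff{w})\ee^{-\ii\sigma(\Bff{z},\Bff{w})}\bigr).
\]
Injectivity of the Weyl quantization on this class of Gaussian symbols reduces the two asserted equalities to the pointwise identities
\[
	g(\Bff{z}-\Bff{v}) = \ee^{\frac{\ii}{2}\sigma(\Bff{u},\Bff{v})}\,g(\Bff{z}-\tfrac{1}{2}\Bff{u})\,\ee^{\ii\sigma(\Bff{z},\Bff{u})}, \quad g(\Bff{z}-\Bff{v}) = \ee^{\frac{\ii}{2}\sigma(\Bff{v},\Bff{w})}\,g(\Bff{z}-\tfrac{1}{2}\Bff{w})\,\ee^{-\ii\sigma(\Bff{z},\Bff{w})}.
\]

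Taking logarithms, the matching of quadratic coefficients in $\Bff{z}$ is automatic (both sides contain only $-\tfrac{1}{2}\Bff{z}^T M \Bff{z}$), while matching of linear coefficients in the first identity reduces to $M(\Bff{v}-\tfrac{1}{2}\Bff{u}) = \ii J\Bff{u}$. Substituting $\Bff{u} = (I-\Bff{K}^{-1})\Bff{v}$ gives $\tfrac{1}{2}M(I+\Bff{K}^{-1})\Bff{v} = \ii J(I-\Bff{K}^{-1})\Bff{v}$, and right-multiplying by $\Bff{K}$ recovers exactly the Cayley-transform relation from Step 1. The second identity is handled identically with $\Bff{w} = (I-\Bff{K})\Bff{v}$.

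The main obstacle I foresee is the matching of constant terms in $\Bff{z}$, which after substitution becomes the quadratic identity $-\tfrac{1}{2}\Bff{v}^T M \Bff{v} = \tfrac{\ii}{2}\sigma(\Bff{u},\Bff{v}) - \tfrac{1}{8}\Bff{u}^T M \Bff{u}$ in $\Bff{v}$ (and its analog for $\Bff{w}$). The plan is to use the Cayley-transform identity to rewrite $M$ in terms of $J(\Bff{K}-I)(\Bff{K}+I)^{-1}$ and combine this with the symplectic identity $\Bff{K}^T J \Bff{K} = J$ to collapse $A^T M A$ (with $A = I - \Bff{K}^{-1}$) into $M$-terms plus an antisymmetric remainder; symmetrizing in $\Bff{v}$ then absorbs the remainder via the antisymmetry of $J$. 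This is the only step where more than a one-line symbol manipulation is required, and is essentially a tedious but routine bookkeeping of Cayley-transform and symplectic relations.
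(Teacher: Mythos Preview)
Your proposal is correct and follows essentially the same approach as the paper: both reduce the operator identity via the Mehler symbol and Lemma~\ref{lem_shift_Egorov} to a pointwise Gaussian identity, then match linear and constant terms. The paper parametrizes the exponent by $T=\tanh(H_q/2)=(\Bff{K}+1)^{-1}(\Bff{K}-1)$ (equivalent to your $M=2\ii JT$) and handles the constant term by substituting $\Bff{v}=\tfrac12(T^{-1}+1)\Bff{u}$ and invoking the $\sigma$-antisymmetry of $T$, which collapses the computation to $c_0=\tfrac{1}{4\ii}\sigma(T^{-1}\Bff{u},\Bff{u})=\tfrac{\ii}{2}\sigma(\Bff{u},\Bff{v})$ in a few lines; this is exactly your ``tedious but routine bookkeeping,'' packaged slightly more cleanly than the $\Bff{K}^\top J\Bff{K}=J$ route you sketch.
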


\begin{proof} We prove the first equality, and the second follows similarly.

Let
\[
	T = \tanh(H_q/2) = (\Bff{K}+1)^{-1}(\Bff{K}-1),
\]
recalling that strict positivity of $\Bff{K}$ implies that $-1 \notin \opnm{Spec}\Bff{K}$. Disregarding constants in the Mehler formula \eqref{eq_Mehler}, it suffices to verify the equality for the symbol
\[
	a(\Bff{z}) = \exp\left(\sigma(\Bff{z}, \frac{1}{\ii}T\Bff{z})\right), \quad \Bff{z} = (x,\xi) \in \Bbb{C}^{2n}.
\]
By Lemma \ref{lem_shift_Egorov},
\[
	 \mathcal{S}_{\Bff{v}} a^w \mathcal{S}_{\Bff{v}}^{-1} = \ee^{c_0}a^w \mathcal{S}_{\Bff{u}}^{-1}
\]
if and only if
\[
	\sigma(\Bff{z}-\Bff{v}, \frac{1}{\ii}T(\Bff{z}-\Bff{v})) = c_0 + \sigma(\Bff{z} - \Bff{u}/2, \frac{1}{\ii} T (\Bff{z} - \Bff{u}/2)) + \ii \sigma(\Bff{z}, \Bff{u}).
\]
Eliminating the term $\sigma(\Bff{z}, \frac{1}{\ii}T\Bff{z})$ from both sides and using antisymmetry of $T$ with respect to $\sigma$, we obtain the equivalent
\[
	2\sigma(\frac{1}{\ii} T\Bff{v}, \Bff{z}) + \sigma(\Bff{v}, \frac{1}{\ii}T\Bff{v}) = c_0 + \sigma(\frac{1}{\ii}(1+T)\Bff{u}, \Bff{z}) + \frac{1}{4}\sigma(\Bff{u}, \frac{1}{\ii}T\Bff{u}).
\]
From this, we deduce that
\[
	\begin{aligned}
	\Bff{u} &= 2(1+T)^{-1}T\Bff{v}
	\\ &= 2\left((\Bff{K}+1)2\Bff{K}^{-1}\right)^{-1}(\Bff{K}+1)^{-1}(\Bff{K}-1)^{-1}\Bff{v}
	\\ &= (1-\Bff{K}^{-1})\Bff{v}.
	\end{aligned}
\]

To compute the constant, we use that $\Bff{v} = \frac{1}{2}(T^{-1}+1)\Bff{u}$ to obtain
\[
	\begin{aligned}
	c_0 &= \sigma(\Bff{v}, \frac{1}{\ii}T\Bff{v}) - \frac{1}{4}\sigma(\Bff{u}, \frac{1}{\ii}T\Bff{u})
	\\ &= \frac{1}{4\ii}\sigma((T^{-1}+1)\Bff{u}, (1+T)\Bff{u}) - \frac{1}{4\ii}\sigma(\Bff{u}, T\Bff{u})
	\\ &=  \frac{1}{4\ii}\left(\sigma(T^{-1}\Bff{u}, \Bff{u}) + \sigma(T^{-1}\Bff{u}, T\Bff{u}) + \sigma(\Bff{u}, \Bff{u}) + \sigma(\Bff{u}, T\Bff{u}) - \sigma(\Bff{u}, T\Bff{u})\right)
	\\ &= \frac{1}{4\ii}\sigma(T^{-1}\Bff{u}, \Bff{u}),
	\end{aligned}
\]
since $T$ is antisymmetric with respect to $\sigma$ and $\sigma(\Bff{u}, \Bff{u}) = 0$. Since
\[
	\sigma(T^{-1}\Bff{u}, \Bff{u}) = \sigma((1+T^{-1})\Bff{u}, \Bff{u}) = \sigma(2\Bff{v}, \Bff{u}),
\]
we obtain
\[
	c_0 = \frac{\ii}{2}\sigma(\Bff{u}, \Bff{v}),
\]
proving the first equality in the proposition. Again, the second equality follows by a similar argument.
\end{proof}

As a consequence, we are able to extend the composition relation \eqref{eq_Weyl_semigroup} to shifted operators by including a geometric coefficient.

\begin{theorem}\label{thm_shift_composition}
For $j = 1, 2$, let $\Bff{v}_j \in \Bbb{C}^{2n}$, let $q_j$ be quadratic forms for which $\Bff{K}_j = \exp H_{q_j}$ are strictly positive. Let
\[
	P_j = \Op^w(q_j((x,\xi) - \Bff{v}_j))
\]
and let $\ee^{-\ii P_j}$ be defined as in Section \ref{ssec_Fock}. By Proposition \ref{prop_supersymmetric_onto}, let $q_3$ be a quadratic form such that, with $Q_3 = q_3^w$,
\begin{equation}\label{eq_shift_composition_quadratic}
	\ee^{-\ii Q_3} = \pm \ee^{-\ii Q_1} \ee^{-\ii Q_2},
\end{equation}
and let $\Bff{K}_3 = \exp H_{q_3} = \Bff{K}_1\Bff{K}_2$.

As in Proposition \ref{prop_crossing}, let
\[
	\begin{aligned}
	\Bff{w}_1 &= (1-\Bff{K}_1)\Bff{v}_1,
	\\ \Bff{u}_2 &= (1-\Bff{K}_2^{-1})\Bff{v}_2,
	\end{aligned}
\]
and define
\begin{equation}\label{eq_def_v3}
	\Bff{v}_3 = (1-\Bff{K}_3)^{-1} \Bff{w}_1 + (1-\Bff{K}_3^{-1})^{-1}\Bff{u}_2.
\end{equation}
Then, with
\[
	P_3 = \Op^w(q_3((x,\xi) - \Bff{v}_3) = \mathcal{S}_{\Bff{v}_3} Q_3\mathcal{S}_{\Bff{v}_3}^{-1},
\]
and the same sign as in \eqref{eq_shift_composition_quadratic}, one has
\[
	\ee^{-\ii P_1}\ee^{-\ii P_2} = \pm \ee^{\frac{\ii}{2}(\sigma(\Bff{v}_1 - \Bff{v}_3, \Bff{w}_1) + \sigma(\Bff{u}_2, \Bff{v}_2 - \Bff{v}_3))} \ee^{-\ii P_3}.
\]
\end{theorem}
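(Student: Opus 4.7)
The idea is to use \eqref{eq_shift_Egorov} to write $\ee^{-\ii P_j} = \mathcal{S}_{\Bff{v}_j}\ee^{-\ii Q_j}\mathcal{S}_{\Bff{v}_j}^{-1}$, and then apply Proposition \ref{prop_crossing} to unfold each conjugation so that the two quadratic factors become adjacent. Applying the second identity of Proposition \ref{prop_crossing} to $\ee^{-\ii P_1}$ collects the conjugating data into $\mathcal{S}_{\Bff{w}_1}$ on the left, while the first identity applied to $\ee^{-\ii P_2}$ collects its conjugating data into $\mathcal{S}_{\Bff{u}_2}^{-1}$ on the right. This produces
\[
\ee^{-\ii P_1}\ee^{-\ii P_2} = \ee^{\frac{\ii}{2}(\sigma(\Bff{v}_1,\Bff{w}_1)+\sigma(\Bff{u}_2,\Bff{v}_2))}\,\mathcal{S}_{\Bff{w}_1}\ee^{-\ii Q_1}\ee^{-\ii Q_2}\mathcal{S}_{\Bff{u}_2}^{-1},
\]
and \eqref{eq_shift_composition_quadratic} collapses the middle product to $\pm\ee^{-\ii Q_3}$.

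The remaining task is to rewrite $\mathcal{S}_{\Bff{w}_1}\ee^{-\ii Q_3}\mathcal{S}_{\Bff{u}_2}^{-1}$ as a scalar multiple of $\ee^{-\ii P_3} = \mathcal{S}_{\Bff{v}_3}\ee^{-\ii Q_3}\mathcal{S}_{\Bff{v}_3}^{-1}$. A short rearrangement of the first identity of Proposition \ref{prop_crossing} combined with \eqref{eq_shift_composition} yields the complex Egorov rule $\ee^{-\ii Q_3}\mathcal{S}_{\Bff{v}} = \mathcal{S}_{\Bff{K}_3\Bff{v}}\ee^{-\ii Q_3}$. Commuting $\mathcal{S}_{\Bff{u}_2}^{-1}$ past $\ee^{-\ii Q_3}$ in this way and merging it with $\mathcal{S}_{\Bff{w}_1}$ via \eqref{eq_shift_composition} produces $\ee^{-\frac{\ii}{2}\sigma(\Bff{w}_1,\Bff{K}_3\Bff{u}_2)}\mathcal{S}_{\Bff{w}_1-\Bff{K}_3\Bff{u}_2}\ee^{-\ii Q_3}$. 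A direct computation from \eqref{eq_def_v3}, using the algebraic identity $(1-\Bff{K}_3)(1-\Bff{K}_3^{-1})^{-1} = -\Bff{K}_3$, shows $\Bff{w}_1-\Bff{K}_3\Bff{u}_2 = (1-\Bff{K}_3)\Bff{v}_3 =: \Bff{w}_3$, which is exactly the $\Bff{w}$-vector attached to $\Bff{v}_3$ in Proposition \ref{prop_crossing}. Reversing the second identity of that proposition then gives $\mathcal{S}_{\Bff{w}_3}\ee^{-\ii Q_3} = \ee^{-\frac{\ii}{2}\sigma(\Bff{v}_3,\Bff{w}_3)}\ee^{-\ii P_3}$.

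Collecting all accumulated phase factors gives $\ee^{-\ii P_1}\ee^{-\ii P_2} = \pm \ee^{\frac{\ii}{2}C}\ee^{-\ii P_3}$ with
\[
C = \sigma(\Bff{v}_1,\Bff{w}_1)+\sigma(\Bff{u}_2,\Bff{v}_2) - \sigma(\Bff{w}_1,\Bff{K}_3\Bff{u}_2) - \sigma(\Bff{v}_3,\Bff{w}_3).
\]
Matching $C$ with the theorem's exponent $\sigma(\Bff{v}_1-\Bff{v}_3,\Bff{w}_1)+\sigma(\Bff{u}_2,\Bff{v}_2-\Bff{v}_3)$ reduces, via bilinearity and $\Bff{w}_3 = \Bff{w}_1-\Bff{K}_3\Bff{u}_2$, to the single identity $\sigma(\Bff{w}_1-\Bff{v}_3,\Bff{K}_3\Bff{u}_2) = \sigma(\Bff{u}_2,\Bff{v}_3)$; using $\sigma(\Bff{K}_3\cdot,\cdot) = \sigma(\cdot,\Bff{K}_3^{-1}\cdot)$ this becomes $\sigma(\Bff{u}_2,\,\Bff{v}_3-\Bff{K}_3^{-1}\Bff{v}_3 + \Bff{K}_3^{-1}\Bff{w}_1) = 0$, and a second short computation from \eqref{eq_def_v3} identifies the inner vector as $\Bff{u}_2$ itself, so the identity collapses to $\sigma(\Bff{u}_2,\Bff{u}_2) = 0$. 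I expect the only real difficulty to be combinatorial: carefully tracking the Maslov-type phases through each application of Proposition \ref{prop_crossing} and \eqref{eq_shift_composition}, together with the two algebraic identities linking $\Bff{v}_3$, $\Bff{w}_1$, $\Bff{u}_2$ that follow directly from \eqref{eq_def_v3}. There is no deeper analytic obstacle beyond what is already contained in Proposition \ref{prop_crossing}.
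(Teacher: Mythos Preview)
Your proposal is correct and follows the same overall strategy as the paper: unfold each $\ee^{-\ii P_j}$ via Proposition~\ref{prop_crossing} to bring the quadratic factors together, collapse $\ee^{-\ii Q_1}\ee^{-\ii Q_2}$ via \eqref{eq_shift_composition_quadratic}, and then re-fold the outer shifts into a conjugation by $\mathcal{S}_{\Bff{v}_3}$.

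The second half differs in bookkeeping. The paper converts $\mathcal{S}_{\Bff{w}_1}$ and $\mathcal{S}_{\Bff{u}_2}^{-1}$ \emph{separately} into two-sided shifts by $\Bff{v}_{3a}=(1-\Bff{K}_3)^{-1}\Bff{w}_1$ and $\Bff{v}_{3b}=(1-\Bff{K}_3^{-1})^{-1}\Bff{u}_2$, then merges them and simplifies the constant by a symmetry argument (the order of pushing $\Bff{w}_1$ and $\Bff{u}_2$ across must not matter, which forces the identity \eqref{eq_composition_proof_any_order}). You instead first extract the exact Egorov rule $\ee^{-\ii Q_3}\mathcal{S}_{\Bff{v}}=\mathcal{S}_{\Bff{K}_3\Bff{v}}\ee^{-\ii Q_3}$ from Proposition~\ref{prop_crossing}, use it to merge both outer shifts into a single left-shift $\mathcal{S}_{\Bff{w}_3}$, and only then invoke Proposition~\ref{prop_crossing} once more. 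Your constant is then verified by a direct algebraic identity reducing to $\sigma(\Bff{u}_2,\Bff{u}_2)=0$. Both routes use the same ingredients; yours is marginally shorter, while the paper's symmetry argument makes the structure of the constant more transparent.
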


\begin{proof}
We begin by using Proposition \ref{prop_crossing} to push shift operators to the outside of the composition:
\[
	\begin{aligned}
	\ee^{-\ii P_1} \ee^{-\ii P_2} &= \mathcal{S}_{\Bff{v}_1} \ee^{-\ii Q_1}\mathcal{S}_{\Bff{v}_1}^{-1}\mathcal{S}_{\Bff{v}_2} \ee^{-\ii Q_2}\mathcal{S}_{\Bff{v}_2}^{-1}
	\\ &= \ee^{\frac{\ii}{2}\sigma(\Bff{v}_1, \Bff{w}_1) + \frac{\ii}{2}\sigma(\Bff{u}_2, \Bff{v}_2)} \mathcal{S}_{\Bff{w}_1}\ee^{-\ii Q_1}\ee^{-\ii Q_2}\mathcal{S}_{\Bff{u}_2}^{-1}
	\\ &= \pm \ee^{\frac{\ii}{2}\sigma(\Bff{v}_1, \Bff{w}_1) + \frac{\ii}{2}\sigma(\Bff{u}_2, \Bff{v}_2)} \mathcal{S}_{\Bff{w}_1}\ee^{-\ii Q_3}\mathcal{S}_{\Bff{u}_2}^{-1}.
	\end{aligned}
\]
Throughout this proof, the symbol $\pm$ makes reference to the sign in \eqref{eq_shift_composition_quadratic}. We continue by pushing the $\Bff{w}_1$ shift to a two-sided shift by
\[
	\Bff{v}_{3a} = (1-\Bff{K}_3)^{-1}\Bff{w}_1.
\]
This gives
\[
	\mathcal{S}_{\Bff{w}_1}\ee^{-\ii Q_3}\mathcal{S}_{\Bff{u}_2}^{-1} = \ee^{-\frac{\ii}{2}\sigma(\Bff{v}_{3a}, \Bff{w}_1)} \mathcal{S}_{\Bff{v}_{3a}} \ee^{-\ii Q_3}\mathcal{S}_{\Bff{v}_{3a}}^{-1}\mathcal{S}_{\Bff{u}_2}^{-1}.
\]
Recalling the composition formula \eqref{eq_shift_composition} and pushing $\Bff{u}_2$ to a two-sided shift by
\[
	\Bff{v}_{3b} = (1-\Bff{K}_3^{-1})^{-1}\Bff{u}_2
\]
gives
\[
	\begin{aligned}
	\mathcal{S}_{\Bff{v}_{3a}} \ee^{-\ii Q_3}\mathcal{S}_{\Bff{v}_{3a}}^{-1}\mathcal{S}_{\Bff{u}_2}^{-1} &= \ee^{-\frac{\ii}{2}\sigma(\Bff{u}_2, \Bff{v}_{3a})}\mathcal{S}_{\Bff{v}_{3a}} \ee^{-\ii Q_3} \mathcal{S}_{\Bff{u}_2}^{-1}\mathcal{S}_{\Bff{v}_{3a}}^{-1}
	\\ &= \ee^{-\ii\sigma(\Bff{u}_2, \Bff{v}_{3a}) - \frac{\ii}{2}\sigma(\Bff{u}_2, \Bff{v}_{3b})} \mathcal{S}_{\Bff{v}_{3a}} \mathcal{S}_{\Bff{v}_{3b}} \ee^{-\ii Q_3} \mathcal{S}_{\Bff{v}_{3b}}^{-1}\mathcal{S}_{\Bff{v}_{3a}}^{-1}
	\end{aligned}
\]

Finally, we set
\[
	\Bff{v}_3 = \Bff{v}_{3a} + \Bff{v}_{3b},
\]
giving \eqref{eq_def_v3}, and we note that $\mathcal{S}_{\Bff{v}_{3a}}\mathcal{S}_{\Bff{v}_{3b}} = c\mathcal{S}_{\Bff{v}_3}$ while $\mathcal{S}_{\Bff{v}_{3b}}^{-1}\mathcal{S}_{\Bff{v}_{3a}}^{-1} = c^{-1}\mathcal{S}_{\Bff{v}_3}^{-1}$.
Therefore
\[
	\mathcal{S}_{\Bff{v}_{3a}} \mathcal{S}_{\Bff{v}_{3b}} \ee^{-\ii Q_3} \mathcal{S}_{\Bff{v}_{3b}}^{-1}\mathcal{S}_{\Bff{v}_{3a}}^{-1} = \mathcal{S}_{\Bff{v}_3} \ee^{-\ii Q_3} \mathcal{S}_{\Bff{v}_3}^{-1} = \ee^{-\ii P_3}.
\]
Combining these computations, we obtain that
\begin{equation}\label{eq_composition_proof_w_c0}
	\ee^{-\ii P_1}\ee^{-\ii P_2} = \pm \ee^{\frac{\ii}{2}c_0} \ee^{-\ii P_3}
\end{equation}
where
\begin{equation}\label{eq_composition_proof_c0_v1}
	c_0 = \sigma(\Bff{v}_1 - \Bff{v}_{3a}, \Bff{w}_1) - 2\sigma(\Bff{u}_2, \Bff{v}_{3a}) + \sigma(\Bff{u}_2, \Bff{v}_2 - \Bff{v}_{3b}).
\end{equation}

To simplify $c_0$, we are motivated by the idea that the result should be independent of our choice to push $\mathcal{S}_{\Bff{w}_1}$ across $\ee^{-\ii Q_3}$ first and $\Bff{S}_{\Bff{u}_2}^{-1}$ second. If we reverse the order, we obtain instead
\[
	c_0 = \sigma(\Bff{v}_1 - \Bff{v}_{3a}, \Bff{w}_1) - 2\sigma(\Bff{v}_{3b}, \Bff{w}_1) + \sigma(\Bff{u}_2, \Bff{v}_2 - \Bff{v}_{3b}).
\]
To check that
\begin{equation}\label{eq_composition_proof_any_order}
	\sigma(\Bff{u}_2, \Bff{v}_{3a}) = \sigma(\Bff{v}_{3b}, \Bff{w}_1),
\end{equation}
we rewrite the statement as
\begin{equation}\label{eq_composition_proof_any_order_2}
	\sigma(\Bff{u}_2, (1-\Bff{K}_3)^{-1}\Bff{w}_1) = \sigma((1-\Bff{K}_3^{-1})^{-1}\Bff{u}_2, \Bff{w}_1).
\end{equation}
This follows readily by using the transpose with respect to $\sigma$, which may be written as $A^{\sigma \top} = -\Bff{J}A^\top \Bff{J}$ when $\Bff{J}(x,\xi) = (-\xi, x)$. This transpose is linear in $A$, commutes with inverses, and preserves the identity matrix. Furthermore, $\Bff{K}_3$ is canonical, so $\Bff{K}_3^{\sigma\top} = \Bff{K}_3^{-1}$. This allows us to verify that
\[
	\left((1-\Bff{K}_3)^{-1}\right)^{\sigma\top} = (1-\Bff{K}_3^{-1})^{-1}
\]
which gives \eqref{eq_composition_proof_any_order_2} and therefore \eqref{eq_composition_proof_any_order}.

We exploit this observation by writing
\[
	-2\sigma(\Bff{u}_2, \Bff{v}_{3a}) = -\sigma(\Bff{u}_2, \Bff{v}_{3a}) + \sigma(\Bff{v}_{3b}, \Bff{w}).
\]
Putting this into \eqref{eq_composition_proof_c0_v1} and recalling that $\Bff{v}_3 = \Bff{v}_{3a} + \Bff{v}_{3b}$ gives 
\[
	c_0 = \sigma(\Bff{v}_1 - \Bff{v}_3, \Bff{w}_1) + \sigma(\Bff{u}_2, \Bff{v}_2 - \Bff{v}_3).
\]
Along with \eqref{eq_composition_proof_w_c0}, this proves the proposition.
\end{proof}

\section{Proofs of norm results}\label{sec_proofs_norms}

\subsection{The purely quadratic case}\label{ssec_norm_quadratic}

In this section, we prove Theorem \ref{thm_norm_quadratic} and a related singular-value-type decomposition for $\ee^{-\ii Q}$, where $Q = q^w$ for $q$ a quadratic form for which $\Bff{K} = \exp H_q$ is strictly positive.

\begin{proof}
Because $Q$ is defined by the Weyl quantization, $Q^* = \overline{q}^w$. Therefore the operator $(\ee^{-\ii Q})^*\ee^{-\ii Q}$ is associated with the canonical transformation $\overline{\Bff{K}}^{-1}\Bff{K}$.

It is straightforward to check that $\overline{\Bff{K}}^{-1}$ is strictly positive, and therefore $\overline{\Bff{K}}^{-1}\Bff{K}$ is as well. By Proposition \ref{prop_supersymmetric_onto}, there exists some quadratic form $q_1$ such that 
\begin{equation}\label{eq_quad_def_q1}
	\exp H_{q_1} = \overline{\Bff{K}}^{-1}\Bff{K}.
\end{equation}
Let $Q_1 = q_1^w.$ By the classical-quantum correspondence as in Proposition \ref{prop_classical_quantum}, we have
\begin{equation}\label{eq_quad_q_q1}
	\pm \ee^{-\ii Q_1} = (\ee^{-\ii Q})^*\ee^{-\ii Q}.
\end{equation}
The operator on the right is compact and positive definite Hermitian, so by Proposition \ref{prop_real_log}, we may choose $-\ii q_1$ negative definite real. As a consequence, the sign in the equality is $+$.

With this choice, it is classical that there exists some real linear canonical transformation $\Bff{U}$ such that
\begin{equation}\label{eq_quad_def_q2}
	-\ii q_2(x,\xi) := -\ii q_1 \circ \Bff{U}^{-1}(x,\xi) = -\sum_{j=1}^n \frac{\lambda_j}{2}(x_j^2 + \xi_j^2),
\end{equation}
where $\lambda_j > 0$ for $j=1,\dots, n$. There is a metaplectic operator $\mathcal{U}$ quantizing $\Bff{U}^{-1}$, meaning in particular that, with $Q_2 = q_2^w$,
\begin{equation}\label{eq_quad_q1_q2}
	\mathcal{U} \ee^{-\ii Q_1}\mathcal{U}^* = \ee^{-\ii \mathcal{U}Q_1 \mathcal{U}^*} = \ee^{-\ii Q_2}.
\end{equation}

As the direct sum of harmonic oscillators,
\[
	\opnm{Spec} (-\ii Q_2) = \left\{-\sum_{j=1}^n \frac{\lambda_j}{2}(1 + 2\alpha_j)\::\: \alpha \in \Bbb{N}^n\right\},
\]
so
\[
	\|\ee^{-\ii Q_2}\| = \ee^{-\sum \lambda_j/2}.
\]

By \eqref{eq_quad_q_q1} and \eqref{eq_quad_q1_q2},
\[
	\|\ee^{-\ii Q}\| = \|\ee^{-\ii Q_1}\|^{1/2} = \|\ee^{-\ii Q_2}\|^{1/2} = \ee^{-\sum \lambda_j/4}.
\]
The Hamilton vector field of the harmonic oscillator model $q_0(x,\xi) = \frac{1}{2}(\xi^2 + x^2)$ in dimension one is rotation by $-\pi/2$, or $H_{q_0}(x,\xi) = (\xi, -x)$, so $\opnm{Spec}H_{q_0} = \{\pm \ii\}$. Since $-\ii q_2$ is a direct sum of harmonic oscillators,
\[
	\opnm{Spec}H_{-\ii q_2} = \{\pm \ii \lambda_j\}_{j=1}^n.
\]
By the spectral mapping theorem,
\[
	\opnm{Spec}\exp H_{q_2} = \{\ee^{\pm \lambda_j}\}_{j=1}^n.
\]
Writing $\mu_j = \ee^{-\lambda_j}$ gives that $\opnm{Spec} \exp H_{q_2} = \{\mu_j, \mu_j^{-1}\}_{j=1}^n$ and that
\[
	\|\ee^{-\ii Q}\| = \prod \left\{\mu_j^{1/4}\::\: \mu_j \in \opnm{Spec} \exp H_{q_2} \cap (0, 1)\right\}.
\]

The composition \eqref{eq_quad_def_q2} with a linear canonical transformation induces the similarity relation 
\begin{equation}\label{eq_Hq1_Hq2}
	H_{q_2} = \Bff{U} H_{q_1}\Bff{U}^{-1}.
\end{equation}
Therefore, using the definition \eqref{eq_quad_def_q1} of $q_1$,
\[
	\opnm{Spec} \exp H_{q_2} = \opnm{Spec} \exp H_{q_1} = \opnm{Spec} \overline{\Bff{K}}^{-1}\Bff{K},
\]
which completes the proof of the theorem.
\end{proof}

We continue with the proof of the first part of Theorem \ref{thm_SVD}.

\begin{proof}[Proof of \eqref{eq_SVD_quad}]
Using $q_2$ from \eqref{eq_quad_def_q2}, let
\[
	\Bff{L} = \exp(\frac{1}{2}H_{q_2}).
\]
By \eqref{eq_quad_def_q1} and \eqref{eq_Hq1_Hq2}, $\Bff{U}\Bff{L}\Bff{U}^{-1}$ is a natural square root of $\overline{\Bff{K}}^{-1}\Bff{K}.$ Furthermore, because $-\ii q_2$ is real, $\overline{\Bff{L}} = \Bff{L}^{-1}$.

Imitating the singular value decomposition, we look for a metaplectic operator $\mathcal{V}$ quantizing a real linear canonical transformation $\Bff{V}:\Bbb{R}^{2n}\to\Bbb{R}^{2n}$ such that
\begin{equation}\label{eq_quad_SVD}
	\ee^{-\ii Q} = \mathcal{V} \ee^{-\frac{\ii}{2} Q_2} \mathcal{U}^*.
\end{equation}
On the level of canonical transformations, this means that
\[
	\Bff{K} = \Bff{V} \Bff{L} \Bff{U}^{-1},
\]
or
\[
	\Bff{V} = \Bff{K} \Bff{U} \Bff{L}^{-1}.
\]
Therefore, using that $\overline{\Bff{L}} = \Bff{L}^{-1}$, we can prove that $\overline{\tilde{\Bff{U}}}$ is real:
\[
	\begin{aligned}
	\overline{\Bff{V}} &= \overline{\Bff{K}} \Bff{U}\overline{\Bff{L}^{-1}}
	\\ &= \overline{\Bff{K}} \Bff{U}\Bff{L}
	\\ &= \overline{\Bff{K}} \Bff{U}\Bff{L}^2 \Bff{U}^{-1}\Bff{U}\Bff{L}^{-1}
	\\ &= \overline{\Bff{K}\Bff{K}^{-1}}\Bff{KUL}^{-1}
	\\ &= \Bff{V}.
	\end{aligned}
\]

Since $\Bff{V}$ is real, a corresponding metaplectic transformation $\mathcal{V}$ which quantizes $\Bff{V}$ exists. By the classical-quantum correspondence \eqref{eq_Weyl_semigroup} (we have not proven the classical-quantum correspondence here for canonical transformations which are merely positive; we are essentially relying on \cite[Prop.~2.9]{Hormander_1995}), we have that \eqref{eq_quad_SVD} holds up to sign. We may change the sign of either $\mathcal{U}$ or $\mathcal{V}$ freely, since multiplication by $-1$ is an element of the metaplectic group. This gives a singular-value-type decomposition of $\ee^{-\ii Q}$ and therefore proves the first part of Theorem \ref{thm_SVD}.
\end{proof}

\subsection{Proof of norms and decomposition for shifted operators}

We continue by analyzing $P = \Op^w(q(\Bff{z} - \Bff{v}))$. In summary, the canonical transformation associated with $(\ee^{\ii P})^*(\ee^{\ii P})$ allows us to identify the real displacement $\Bff{a}_1$ for which
\[
	(\ee^{\ii P})^*\ee^{\ii P} = cS_{\Bff{a}_1}(\ee^{\ii Q})^* \ee^{\ii Q}S_{\Bff{a}_1}^{-1},
\]
and the constant $c$ is a consequence of Proposition \ref{prop_crossing}.

We remark that one could prove Theorem \ref{thm_norm_deg2} by applying Theorem \ref{thm_shift_composition} to $(\ee^{-\ii P})^*\ee^{-\ii P}$. In order to avoid a somewhat lengthy and redundant computation, we directly prove the second part of Theorem \ref{thm_SVD} from which Theorem \ref{thm_norm_deg2} is an immediate consequence.

\begin{proof}[Proof of Theorem \ref{thm_norm_deg2}]
In this section, we use the notation
\[
	\Bff{K}_1 = \exp H_q
\]
for the canonical transformation associated with $\ee^{-\ii Q}$ and
\[
	\Bff{K}_2 = \overline{\Bff{K}_1}^{-1} = \exp H_{-\overline{q}}
\]
for the canonical transformation associated with $(\ee^{-\ii Q})^* = \ee^{\ii Q^*}$. We will find $\Bff{a}_1, \Bff{a}_2 \in \Bbb{R}^{2n}$ such that, for constants $c_1, c_2, c_3$,
\begin{equation}\label{eq_SVD_1}
	(\ee^{-\ii P})^*\ee^{-\ii P} = c_1\mathcal{S}_{\Bff{a}_1}(\ee^{-\ii Q})^*\ee^{-\ii Q}\mathcal{S}_{\Bff{a}_1}^{-1},
\end{equation}
\begin{equation}\label{eq_SVD_2}
	\ee^{-\ii P}(\ee^{-\ii P})^* = c_2\mathcal{S}_{\Bff{a}_2}\ee^{-\ii Q}(\ee^{-\ii Q})^*\mathcal{S}_{\Bff{a}_2}^{-1},
\end{equation}
and consequently
\begin{equation}\label{eq_SVD_3}
	\ee^{-\ii P} = c_3 \mathcal{S}_{\Bff{a}_2}\ee^{-\ii Q}\mathcal{S}_{\Bff{a}_1}^{-1}.
\end{equation}

Recall from Section \ref{ssec_Fock} that
\[
	\ee^{-\ii P} = \mathcal{S}_{\Bff{v}} \ee^{-\ii Q} \mathcal{S}_{\Bff{v}}^{-1}.
\]
Since $\mathcal{S}_{\Bff{v}}$ is associated with the canonical transformation $\Bff{z} \mapsto \Bff{z} + \Bff{v}$ (see Lemma \ref{lem_shift_Egorov}) and $\ee^{-\ii Q}$ is associated with $\Bff{K}_1 = \exp H_q$ (see Proposition \ref{prop_Mehler_via_FBI}), $\ee^{-\ii P}$ is associated with
\begin{equation}\label{eq_Egorov_exp-iP}
	\Bff{z}\mapsto \Bff{K}_1(\Bff{z} - \Bff{v}) + \Bff{v}.
\end{equation}
Because $\mathcal{S}^*_{\Bff{v}} = \mathcal{S}_{-\overline{\Bff{v}}}$ and $(\ee^{-\ii Q})^* = \ee^{\ii \overline{q}^w}$ is associated with $\Bff{K}_2 = \exp(-H_{\overline{q}}) = \overline{\Bff{K}_1}^{-1}$, we see that $(\ee^{-\ii P})^* \ee^{-\ii P}$ is associated with
\[
	\begin{aligned}
	\Bff{z} &\mapsto \Bff{K}_2(\Bff{K}_1(\Bff{z} - \Bff{v}) + \Bff{v} - \overline{\Bff{v}}) + \overline{\Bff{v}}
	\\ &= \Bff{K}_2\Bff{K}_1 \Bff{z} + (1-\Bff{K}_2\Bff{K}_1)\Re\Bff{v} + \ii(-1+2\Bff{K}_2-\Bff{K}_2\Bff{K}_1)\Im \Bff{v}.
	\end{aligned}
\]
On the other hand, for $\Bff{a}_1 \in \Bbb{C}^{2n}$ to be determined, $\mathcal{S}_{\Bff{a}_1} (\ee^{-\ii Q})^*\ee^{-\ii Q}\mathcal{S}_{\Bff{a}_1}^{-1}$ is associated with the canonical transformation
\[
	\begin{aligned}
	\Bff{z} &\mapsto \Bff{K}_2\Bff{K}_1(\Bff{z} - \Bff{a}_1)+\Bff{a}_1
	\\ &= \Bff{K}_2\Bff{K}_1\Bff{z} + (1-\Bff{K}_2\Bff{K}_1)\Bff{a}_1.
	\end{aligned}
\]

In order to make \eqref{eq_SVD_1} hold on the level of canonical transformations, we set
\begin{equation}\label{eq_a1_no_ImK}
	\Bff{a}_1 = \Re \Bff{v} + \ii(1-\Bff{K}_2\Bff{K}_1)^{-1}(-1 + 2\Bff{K}_2 - \Bff{K}_2\Bff{K}_1)\Im \Bff{v}.
\end{equation}
Notice that $1 \notin \opnm{Spec}\Bff{K}_2\Bff{K}_1$ because $\Bff{K}_2\Bff{K}_1$ is a strictly positive canonical transformation. To check that $\Bff{a}_1$ is real, as it should be since $(\ee^{-\ii P})^*\ee^{-\ii P}$ is self-adjoint, note that
\[
	(1 - \Bff{K}_2\Bff{K}_1)^{-1} = (\Bff{K}_2 - \Bff{K}_1)^{-1} \overline{\Bff{K}_1} = \frac{\ii}{2}(\Im \Bff{K}_1)^{-1}\overline{\Bff{K}_1}.
\]
Therefore
\begin{equation}\label{eq_a1_ImK}
	\begin{aligned}
	\Bff{a}_1 &= \Re \Bff{v} -\frac{1}{2}(\Im \Bff{K}_1)^{-1}(-\overline{\Bff{K}_1} + 2 - \Bff{K}_1)\Im \Bff{v}
	\\ &= \Re \Bff{v} + (\Im \Bff{K}_1)^{-1}(\Re \Bff{K}_1 - 1) \Im \Bff{v}.
	\end{aligned}
\end{equation}

Because the canonical transformation associated with $(\ee^{-\ii P})^*$ is the same as the canonical transformation associated with $\ee^{-\ii P}$ except $\Bff{K}_1$ is replaced by $\Bff{K}_2$ and $\Bff{v}$ is replaced by $\overline{\Bff{v}}$, \eqref{eq_SVD_2} holds on the level of canonical transformations when
\begin{equation}\label{eq_a2_ImK}
	\Bff{a}_2 = \Re \Bff{v} - (\Im \Bff{K}_2)^{-1}(\Re \Bff{K}_2 - 1) \Im \Bff{v}.
\end{equation}
On the other hand, \eqref{eq_SVD_3} holds when
\[
	\Bff{K}_1(\Bff{z} -\Bff{v}) + \Bff{v} = \Bff{K}_1(\Bff{z} - \Bff{a}_1) + \Bff{a}_2,
\]
or
\begin{equation}\label{eq_a1a2_SVD_3}
	\Bff{v} = (1-\Bff{K}_1)^{-1} \Bff{a}_2 + (1-\Bff{K}_1^{-1})^{-1}\Bff{a}_1.
\end{equation}
We will proceed to verify the equivalent statement that $\Bff{a}_2$ from \eqref{eq_a2_ImK} satisifes
\begin{equation}\label{eq_a2a1v}
	\Bff{a}_2 - \Bff{K}_1\Bff{a}_1 = (1-\Bff{K}_1)\Bff{v}
\end{equation}
in assuming that $\Re \Bff{v} = 0$, since the relation is linear and obvious when $\Im \Bff{v} = 0$.

From \eqref{eq_a1_no_ImK}, we obtain that, when $\Re \Bff{v} = 0$,
\[
	\begin{aligned}
	\Bff{K}_1\Bff{a}_1 &= \ii\Bff{K}_1(1-\Bff{K}_2\Bff{K}_1)(-1 + 2\Bff{K}_2 - \Bff{K}_2\Bff{K}_1)\Im \Bff{v}
	\\ &= \ii(\Bff{K}_1^{-1} - \Bff{K}_2)^{-1}(-1 + 2\Bff{K}_2 - \Bff{K}_2\Bff{K}_1)\Im \Bff{v}
	\\ &= \ii(1-\Bff{K}_1\Bff{K}_2)^{-1}(-\Bff{K}_1 + 2\Bff{K}_1\Bff{K}_2 - \Bff{K}_1\Bff{K}_2\Bff{K}_1)\Im \Bff{v}.
	\end{aligned}
\]
For $\Bff{a}_2$ in \eqref{eq_a2_ImK}, the corresponding expression, when $\Re \Bff{v} = 0$, is
\[
	\Bff{a}_2 = -\ii(1-\Bff{K}_1\Bff{K}_2)^{-1}(-1 + 2\Bff{K}_1 - \Bff{K}_1 \Bff{K}_2)\Im \Bff{v}.
\]
Therefore, again with $\Re \Bff{v} = 0$,
\[
	\begin{aligned}
	\Bff{a}_2 - \Bff{K}_1\Bff{a}_1 &= \ii (1-\Bff{K}_1 \Bff{K}_2)^{-1}(1-2\Bff{K}_1 + \Bff{K}_1\Bff{K}_2 + \Bff{K}_1 - 2\Bff{K}_1\Bff{K}_2 + \Bff{K}_1\Bff{K}_2\Bff{K}_1)\Im \Bff{v}
	\\ &= \ii(1 - \Bff{K}_1 \Bff{K}_2)^{-1}(1-\Bff{K}_1-\Bff{K}_1 \Bff{K}_2 + \Bff{K}_1 \Bff{K}_2\Bff{K}_1)\Im \Bff{v}
	\\ &= \ii(1-\Bff{K}_1)\Im \Bff{v} = (1-\Bff{K}_1)\Bff{v}.
	\end{aligned}
\]
This proves \eqref{eq_a2a1v}.

Having established \eqref{eq_SVD_1}, \eqref{eq_SVD_2}, and \eqref{eq_SVD_3} on the level of canonical transformations, all that remains is to identify the constant coming from Proposition \ref{prop_crossing}. As in that proposition, let
\[
	\begin{aligned}
	\Bff{v}_1 &= (1-\Bff{K}_1^{-1})^{-1}\Bff{a}_1,
	\\ \Bff{v}_2 &= (1-\Bff{K}_1)^{-1}\Bff{a}_2.
	\end{aligned}
\]
From \eqref{eq_a1a2_SVD_3}, $\Bff{v} = \Bff{v}_1 + \Bff{v}_2$. Then, using Proposition \ref{prop_crossing} and \eqref{eq_shift_composition},
\begin{equation}\label{eq_proof_thm_deg2}
	\begin{aligned}
	\mathcal{S}_{\Bff{a}_2} \ee^{-\ii Q} \mathcal{S}_{\Bff{a}_1}^{-1} &= \ee^{-\frac{\ii}{2}\sigma(\Bff{a}_1, \Bff{v}_1)}\mathcal{S}_{\Bff{a}_2}\mathcal{S}_{\Bff{v}_1}\ee^{-\ii Q}\mathcal{S}_{\Bff{v}_1}^{-1}
	\\ &= \ee^{-\frac{\ii}{2}\sigma(\Bff{a}_1, \Bff{v}_1) + \ii \sigma(\Bff{a}_2, \Bff{v}_1)}\mathcal{S}_{\Bff{v}_1}\mathcal{S}_{\Bff{a}_2}\ee^{-\ii Q} \mathcal{S}_{\Bff{v}_1}^{-1}
	\\ &= \ee^{-\frac{\ii}{2}\sigma(\Bff{a}_1, \Bff{v}_1) + \ii \sigma(\Bff{a}_2, \Bff{v}_1) + \frac{\ii}{2}\sigma(\Bff{a}_2, \Bff{v}_2)}\mathcal{S}_{\Bff{v}_1}\mathcal{S}_{\Bff{v}_2} \ee^{-\ii Q} \mathcal{S}_{\Bff{v}_2}^{-1}\mathcal{S}_{\Bff{v}_1}^{-1}
	\\ &= \ee^{-\frac{\ii}{2}\sigma(\Bff{a}_1, \Bff{v}_1) + \ii \sigma(\Bff{a}_2, \Bff{v}_1) + \frac{\ii}{2}\sigma(\Bff{a}_2, \Bff{v}_2)}\mathcal{S}_{\Bff{v}} \ee^{-\ii Q} \mathcal{S}_{\Bff{v}}^{-1}
	\\ &= \ee^{-\frac{\ii}{2}\sigma(\Bff{a}_1, \Bff{v}_1) + \ii \sigma(\Bff{a}_2, \Bff{v}_1) + \frac{\ii}{2}\sigma(\Bff{a}_2, \Bff{v}_2)}\ee^{-\ii P}.
	\end{aligned}
\end{equation}
If we reverse the order, pushing $\Bff{a}_2$ across $\ee^{-\ii Q}$ before pushing $\Bff{a}_1$, we obtain the same exponent except that $\ii \sigma(\Bff{a}_2, \Bff{v}_1)$ is replaced by $\ii \sigma(\Bff{v}_2, \Bff{a}_1)$. We conclude that the two quantities are equal (which may be verified by a direct computation), and therefore
\[
	\begin{aligned}
	-\frac{\ii}{2}\sigma(\Bff{a}_1, \Bff{v}_1) + \ii \sigma(\Bff{a}_2, \Bff{v}_1) &+ \frac{\ii}{2}\sigma(\Bff{a}_2, \Bff{v}_2)
	\\ &= -\frac{\ii}{2}\sigma(\Bff{a}_1, \Bff{v}_1) + \frac{\ii}{2}\sigma(\Bff{v}_2, \Bff{a}_1) + \frac{\ii}{2} \sigma(\Bff{a}_2, \Bff{v}_1) + \frac{\ii}{2}\sigma(\Bff{a}_2, \Bff{v}_2)
	\\ &= \frac{\ii}{2}\left(-\sigma(\Bff{a}_1, \Bff{v}) + \sigma(\Bff{a}_2, \Bff{v})\right)
	\\ &= \frac{\ii}{2}\sigma(\Bff{a}_2 - \Bff{a}_1, \Bff{v}).
	\end{aligned}
\]
Solving for $\ee^{-\ii P}$ in \eqref{eq_proof_thm_deg2}, we conclude that
\begin{equation}\label{eq_shift_conclusion}
	\ee^{-\ii P} = \ee^{\frac{\ii}{2}\sigma(\Bff{v}, \Bff{a}_2 - \Bff{a}_1)}\ee^{-\ii Q}.
\end{equation}
This proves second part of Theorem \ref{thm_SVD}. Theorem \ref{thm_norm_deg2} follows from noting that
\[
	\Bff{a}_2 - \Bff{a}_1 = A\Im \Bff{v}
\]
when, writing again $\Bff{K} = \exp H_q$,
\begin{equation}\label{eq_def_A_matrix}
	A = \Im (\overline{\Bff{K}}^{-1})^{-1}(1- \Re \overline{\Bff{K}}^{-1}) + (\Im \Bff{K})^{-1}(1- \Re \Bff{K}).
\end{equation}
\end{proof}

The decomposition \eqref{eq_shift_conclusion}, combined with the similar argument at the end of Section \ref{ssec_norm_quadratic}, gives a decomposition of singular-value type.

\begin{theorem}\label{thm_SVD}
Let $q:\Bbb{R}^{2n}\to\Bbb{C}$ be a quadratic form such that $\Bff{K}_1 = \exp H_q$ is strictly positive, and fix $\Bff{v} \in \Bbb{C}^{2n}$. Let $Q = q^w$ and, for $p(x,\xi) = q((x,\xi) - \Bff{v})$, let $P = p^w$, and let $\ee^{-\ii Q}$ and $\ee^{-\ii P}$ be as in Section \ref{ssec_Fock}.

Recall also $\Bff{K}_2 = \overline{\Bff{K}_1}^{-1}$, the eigenvalues $\{\mu_j\}_{j=1}^n = \opnm{Spec} \Bff{K}_2 \Bff{K}_1 \cap (0, 1)$, and the vectors $\Bff{a}_1, \Bff{a}_2 \in \Bbb{R}^{2n}$ in \eqref{eq_a1_ImK} and \eqref{eq_a2_ImK}. Finally, define $Q_2$ such that $-\ii Q_2$ is the positive definite harmonic oscillator
\[
	-\ii Q_2 = -\frac{1}{2}\sum_{j=1}^n (\log \mu_j)(D_{x_j}^2 + x_j^2).
\]

Then there exist $\mathcal{U}, \mathcal{V}$ in the metaplectic group such that
\begin{equation}\label{eq_SVD_quad}
	\ee^{-\ii Q} = \mathcal{V} \ee^{-\frac{\ii}{2}Q_2}\mathcal{U}^*,
\end{equation}
and with the shifts defined in \eqref{eq_def_shift},
\[
	\ee^{-\ii P} = \ee^{\frac{\ii}{2}\sigma(\Bff{v}, \Bff{a}_2 - \Bff{a}_1)}\mathcal{S}_{\Bff{a}_2}\ee^{-\ii Q}\mathcal{S}_{\Bff{a}_1}^*.
\]
\end{theorem}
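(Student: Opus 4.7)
The plan is to observe that both parts of Theorem \ref{thm_SVD} have effectively been established already, so the proof amounts to assembling the pieces and matching notation.

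For the factorization \eqref{eq_SVD_quad}, I would invoke the argument given under ``Proof of \eqref{eq_SVD_quad}'' at the end of Section \ref{ssec_norm_quadratic}. That argument constructs, via Proposition \ref{prop_supersymmetric_onto}, a quadratic form $q_1$ with $\exp H_{q_1} = \overline{\Bff{K}}^{-1}\Bff{K}$; chooses $-\ii q_1$ negative definite real (using Proposition \ref{prop_real_log}); diagonalizes it by a real linear canonical $\Bff{U}$ to $-\ii q_2 = -\sum_{j=1}^n \frac{\lambda_j}{2}(x_j^2 + \xi_j^2)$; forms the square root $\Bff{L} = \exp(\frac{1}{2}H_{q_2})$; and verifies that $\Bff{V} = \Bff{K}\Bff{U}\Bff{L}^{-1}$ is real, whence quantization yields \eqref{eq_SVD_quad} up to a harmless sign absorbed into $\mathcal{U}$ or $\mathcal{V}$. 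The one piece of bookkeeping is matching $q_2^w$ with the $Q_2$ of the theorem statement: the spectral identification already performed in the proof of Theorem \ref{thm_norm_quadratic} shows $\lambda_j = -\log \mu_j$, where $\{\mu_j,\mu_j^{-1}\}$ is $\opnm{Spec}\overline{\Bff{K}}^{-1}\Bff{K} = \opnm{Spec}\Bff{K}_2\Bff{K}_1$, so $q_2 = \frac{1}{2}\sum_j(-\log\mu_j)(x_j^2+\xi_j^2)$ and the displayed $\ee^{-\frac{\ii}{2}Q_2}$ agrees with $\ee^{-\frac{\ii}{2}q_2^w}$.

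For the shift decomposition, I would appeal directly to the chain of equalities \eqref{eq_proof_thm_deg2} in the proof of Theorem \ref{thm_norm_deg2}, which yields
\[
\mathcal{S}_{\Bff{a}_2}\ee^{-\ii Q}\mathcal{S}_{\Bff{a}_1}^{-1} = \ee^{\frac{\ii}{2}\sigma(\Bff{a}_2-\Bff{a}_1,\,\Bff{v})}\ee^{-\ii P},
\]
with $\Bff{a}_1,\Bff{a}_2$ the real vectors of \eqref{eq_a1_ImK} and \eqref{eq_a2_ImK}. Solving for $\ee^{-\ii P}$ and using bilinearity of $\sigma$ produces the factor $\ee^{\frac{\ii}{2}\sigma(\Bff{v},\Bff{a}_2-\Bff{a}_1)}$ in the statement. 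The last step is to replace $\mathcal{S}_{\Bff{a}_1}^{-1}$ by $\mathcal{S}_{\Bff{a}_1}^*$: since $\Bff{a}_1$ is real (verified in passing from \eqref{eq_a1_no_ImK} to \eqref{eq_a1_ImK} via $(1-\Bff{K}_2\Bff{K}_1)^{-1} = \frac{\ii}{2}(\Im\Bff{K}_1)^{-1}\overline{\Bff{K}_1}$), formulas \eqref{eq_shift_inverse}--\eqref{eq_shift_adjoint} give $\mathcal{S}_{\Bff{a}_1}^{-1} = \mathcal{S}_{-\Bff{a}_1} = \mathcal{S}_{\Bff{a}_1}^*$.

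There is no real obstacle: the geometric work of identifying $\Bff{a}_1, \Bff{a}_2$ and computing the phase factor was already done in Proposition \ref{prop_crossing} and in the proof of Theorem \ref{thm_norm_deg2}; and the singular-value factorization of $\ee^{-\ii Q}$ was done at the end of Section \ref{ssec_norm_quadratic}. The only thing mildly worth emphasizing is that, with the real normalization in place, the three operators $\mathcal{V}\ee^{-\frac{\ii}{2}Q_2}\mathcal{U}^*$ and $\mathcal{S}_{\Bff{a}_2}\ee^{-\ii Q}\mathcal{S}_{\Bff{a}_1}^*$ combine into a genuine singular-value decomposition of $\ee^{-\ii P}$, since $\mathcal{S}_{\Bff{a}_j}$ are unitary and $\ee^{-\frac{\ii}{2}Q_2}$ is the positive self-adjoint harmonic-oscillator contraction with spectrum $\{\prod_j \mu_j^{(2\alpha_j+1)/2}\}_{\alpha\in\Bbb{N}^n}$.
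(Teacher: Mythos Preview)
Your proposal is correct and follows exactly the paper's own approach: the paper presents Theorem \ref{thm_SVD} as the combination of the argument at the end of Section \ref{ssec_norm_quadratic} (for \eqref{eq_SVD_quad}) and the derivation culminating in \eqref{eq_shift_conclusion} (for the shift decomposition), and your write-up tracks this assembly accurately, including the identification $\lambda_j = -\log\mu_j$ and the replacement $\mathcal{S}_{\Bff{a}_1}^{-1} = \mathcal{S}_{\Bff{a}_1}^*$ via reality of $\Bff{a}_1$. One trivial slip: the passage from $\ee^{\frac{\ii}{2}\sigma(\Bff{a}_2-\Bff{a}_1,\Bff{v})}$ to $\ee^{\frac{\ii}{2}\sigma(\Bff{v},\Bff{a}_2-\Bff{a}_1)}$ uses antisymmetry of $\sigma$, not bilinearity.
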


\begin{remark}\label{rem_selfadjoint_simplification}
We draw the reader's attention to the case $\overline{\Bff{K}_1}^{-1} = \Bff{K}_1$, which occurs precisely when $\ee^{-\ii Q}$ is self-adjoint. From \eqref{eq_a1_no_ImK}, and recalling that $\tanh (H_q/2) = (\Bff{K}+1)^{-1}(\Bff{K}-1)$,
\[
	\begin{aligned}
	\Bff{a}_1 &= \Re \Bff{v} - \ii (1-\Bff{K}^2)^{-1} (1-\Bff{K})^2 \Im \Bff{v}
	\\ &= \Re \Bff{v} - \ii (1 + \Bff{K})^{-1}(1-\Bff{K})\Im \Bff{v}
	\\ &= \Re \Bff{v} - \frac{1}{\ii}\tanh(H_q/2)\Im \Bff{v}.
	\end{aligned}
\]
Since $\Bff{a}_2$ is obtained by replacing $\Bff{K}$ by $\overline{\Bff{K}}^{-1}$ (which has no effect since we assumed these two are equal) and $\Bff{v}$ by $\overline{\Bff{v}}$, we see that
\[
	\Bff{a}_2 = \Re \Bff{v} + \frac{1}{\ii}\tanh(H_q/2)\Im \Bff{v}.
\]
Therefore, from \eqref{eq_shift_conclusion},
\[
	\ee^{-\ii P} = \ee^{-\sigma(\Bff{v}, \frac{1}{\ii}\tanh(H_q/2)\Im \Bff{v})}S_{\Bff{a}_2}\ee^{-\ii Q}S_{\Bff{a}_1}^{-1},
\]
and
\[
	\|\ee^{-\ii P}\| = \ee^{-\sigma(\Im \Bff{v}, \frac{1}{\ii}\tanh(H_q/2)\Im \Bff{v})}\|\ee^{-\ii Q}\|.
\]
Note that the exponent is the negative of the Mehler exponent \eqref{eq_Mehler}, taken at $\Im \Bff{v}$, which is real when $\overline{\Bff{K}}^{-1} = \Bff{K}$.
\end{remark}

\section{Structure of evolution operators generated by degree-2 Hamiltonians}\label{sec_generators_canonical}

This section is devoted the fundamental structure of the set of evolution operators used in the present work. We begin by recalling recalling the generalized solutions introduced in \cite{Aleman_Viola_2014b} when the generator is a linear perturbation of a supersymmetric quadratic form. We then show the equivalence of positivity or ellipticity conditions used in this work. Next, we dispense with the supersymmetry hypothesis by showing that it is implied by strict positivity of the Hamilton flow, and that conversely any strictly positive canonical transformation corresponds to the flow of a supersymmetric quadratic form. Using the FBI--Bargmann point of view, we then establish the extension of well-known Mehler formulas and Egorov relations for quadratic generators, in addition to the classical-quantum correspondence. Finally, we prove Theorem \ref{thm_gaussian_kernels}.

\subsection{Evolution operators via Fock spaces}\label{ssec_Fock}

We begin by recalling the maximal definition \cite{Aleman_Viola_2014b} of $\ee^{-\ii Q}$, when $Q = q^w$ for $q$ a supersymmetric quadratic form in the sense defined below. This is performed via an FBI--Bargmann reduction essentially due to \cite{Sjostrand_1974}. For further details on FBI--Bargmann transforms with quadratic phase, we refer the reader to \cite[Ch.~13]{Zworski_2012} or \cite[Ch.~12]{Sjostrand_LoR}. We see in Proposition \ref{prop_positivity_implies_supersymmetry} that a supersymmetry hypothesis would be superfluous in the context of this work, since it is implied by strict positivity of the Hamilton flow.

\begin{definition}\label{def_supersymmetric}
A quadratic form $q:\Bbb{R}^{2n}\to \Bbb{C}$ is supersymmetric if it may be written as
\[
	q(x,\xi) = B(\xi - G_+ x)\cdot (\xi - G_- x)
\]
for $B \in \Bbb{C}^{n\times n}$ any matrix and $G_+, G_- \in \Bbb{C}^{n\times n}$ symmetric matrices for which $\pm \Im G_\pm > 0$ in the sense of positive definite matrices.
\end{definition}

By \cite[Prop.~3.3]{Aleman_Viola_2014b}, a quadratic form $q$ is supersymmetric if and only if $Q = q^w$ can be reduced to $Mx\cdot\partial_x + \frac{1}{2}\tr M$ via an appropriate FBI--Bargmann transform $\mathfrak{T}$. In more detail, this latter condition means that there exists both a complex linear canonical transformation $\Bff{T}$ such that 
\begin{equation}\label{eq_q_tilde}
	\tilde{q}(x,\xi) := q\circ \Bff{T}^{-1}(x,\xi) = Mx\cdot \ii \xi
\end{equation}
for some matrix $M \in \Bbb{M}_{n\times n}(\Bbb{C})$, and a unitary map
\[
	\begin{aligned}
	\mathfrak{T}u(x) &= c_\varphi \int \ee^{\ii \varphi(x,y)}\,u(y)\,\dd y,
	\\ \mathfrak{T}&:L^2(\Bbb{R}^n) \to H_\Phi(\Bbb{C}^n) = \opnm{Hol}(\Bbb{C}^n) \cap L^2(\Bbb{C}^n, \ee^{-2\Phi(x)}\,\dd \Re x\,\dd \Im x),
	\end{aligned}
\]
where $\Phi: \Bbb{C}^n \to [0, \infty)$ is real-quadratic and strictly convex,
for which, for any $a \in \mathscr{S}'(\Bbb{R}^{2n})$,
\begin{equation}\label{eq_Egorov_FBI}
	\mathfrak{T}a^w\mathfrak{T}^* = (a \circ \Bff{T}^{-1})^w.
\end{equation}

As a result, with $Q = q^w$ and $\tilde{Q} = \tilde{q}^w$,
\begin{equation}\label{eq_Egorov_q_tilde}
	\mathfrak{T}Q\mathfrak{T}^* = \tilde{Q} = Mx\cdot \partial_x + \frac{1}{2}\tr M.
\end{equation}
Note also that composition with $\Bff{T}$ induces a similarity relation for Hamilton vector fields,
\begin{equation}\label{eq_similarity_FBI}
	H_{\tilde{q}} = \Bff{T}H_q\Bff{T}^{-1},
\end{equation}
and $H_{\tilde{q}}$ takes the simple form
\[
	H_{\tilde{q}} = \left(\begin{array}{cc} \ii M & 0 \\ 0 & -\ii M^\top\end{array}\right).
\]
The same similarity relation also simplifies the Hamilton flow: if $\Bff{K} = \exp H_q$ and $\tilde{\Bff{K}} = \exp H_{\tilde{q}}$, then
\begin{equation}\label{eq_K_FBI}
	\tilde{\Bff{K}} = \Bff{T}\Bff{K}\Bff{T}^{-1} = \left(\begin{array}{cc}\ee^{\ii M} & 0 \\ 0 & \ee^{-\ii M^\top}\end{array}\right).
\end{equation}
This classical fact, that conjugation by $\mathfrak{T}$ serves to block-diagonalize $H_q$, is the cornerstone of the analysis here and in \cite{Aleman_Viola_2014b}. For elliptic complex-valued quadratic forms, this technique comes from \cite{Sjostrand_1974}.

\begin{example}\label{ex_Bargmann}
The classical Bargmann transform \cite[Eq.~(2.1)]{Bargmann_1961}
\[
	\mathfrak{B}_0 = \pi^{-3n/4}\int \ee^{-\frac{1}{2}(x^2 + y^2)+\sqrt{2}xy}u(y)\,\dd y
\]
is a unitary map onto the space of holomorphic functions $u$ for which $u(x)\ee^{-|x|^2/2} \in L^2(\Bbb{C}^n_x, \dd \Re x \, \dd \Im x)$. That is, in this case, $\Phi(x) = \frac{1}{2}|x|^2$.

Furthermore, $\mathfrak{B}_0$ quantizes the complex linear canonical transformation
\begin{equation}\label{eq_def_B0}
	\Bff{B}_0(x,\xi) = \frac{1}{\sqrt{2}}(x-\ii \xi, -\ii x + \xi).
\end{equation}
Since, when $q_0(x,\xi) = \frac{1}{2}(\xi^2 + x^2)$, one has $q_0\circ \Bff{B}_0^{-1}(x,\xi) = x\cdot \ii \xi$, the Bargmann transform reduces the harmonic oscillator $Q_0 = q_0^w$ to
\[
	\mathfrak{B}_0 Q_0 \mathfrak{B}_0^* = x\cdot \partial_x + \frac{n}{2}.
\]
\end{example}

On $H_\Phi$, one can show \cite[Thm.~2.9,~2.12]{Aleman_Viola_2014b} that $\tilde{Q}$ has a complete set of generalized eigenfunctions which are homogeneous polynomials, and the span of this set is a core for the evolution operator
\begin{equation}\label{eq_FBI-side_evolution}
	\exp(-\ii \tilde{Q}) = \exp\left(-\ii Mx\cdot \partial_x - \frac{\ii}{2}\tr M\right)u(x) = \ee^{-\frac{\ii}{2}\tr M}u(\ee^{-\ii M}x)
\end{equation}
with maximal domain
\[
	\mathcal{D}(\exp(-\ii \tilde{Q})) = \{u \in H_\Phi \::\: u(\ee^{-\ii M}\cdot)\in H_\Phi\}.
\]
We then define
\begin{equation}\label{eq_FBI_evolution}
	\exp(-\ii Q) = \mathfrak{T}^*\ee^{-\ii \tilde{Q}}\mathfrak{T}
\end{equation}
with the closed dense maximal domain
\[
	\mathcal{D}(\ee^{-\ii Q}) = \mathfrak{T}^*(\mathcal{D}(\ee^{-\ii \tilde{Q}})).
\]

We recall \cite[Thm.~2.9]{Aleman_Viola_2014b} that this operator is compact on $L^2(\Bbb{R}^n)$ if and only if
\begin{equation}\label{eq_FBI_compactness}
	\Phi(\ee^{\ii M}x) - \Phi(x) > 0, \quad \forall x \in \Bbb{C}^n\backslash\{0\}.
\end{equation}
The operator is bounded if and only if the non-strict version of this inequality holds.

We also recall, following \cite[Prop.~2.23]{Aleman_Viola_2014b}, how to compute the Schr\"odinger evolution associated with $P = \Op^w(q(\Bff{z} - \Bff{v}))$ on the FBI--Bargmann side. Continuing to let $\mathfrak{T}$ be a FBI--Bargmann transform adapted to $q$, let
\[
	\mathfrak{w} = \Bff{T} \Bff{v} = (\mathfrak{w}_x, \mathfrak{w}_\xi).
\]
Therefore $\mathfrak{T} \mathcal{S}_{\Bff{v}}\mathfrak{T}^* = \mathcal{S}_{\mathfrak{w}}$ and, where $\Bff{z} = (x,\xi)$,
\[
	\tilde{p}(\Bff{z}) := (p\circ \Bff{T}^{-1}) = \tilde{q}(\Bff{z} - \mathfrak{w})) = M(x - \mathfrak{w}_x) \cdot \ii (\xi - \mathfrak{w}_\xi).
\]
Letting $\tilde{P} = \tilde{p}^w$, we define
\[
	\ee^{-\ii P} = \mathfrak{T}^* \ee^{-\ii \tilde{P}} \mathfrak{T}
\]
where
\[
	\begin{aligned}
	\ee^{-\ii \tilde{P}}u(x) &= \mathcal{S}_{\mathfrak{w}} \ee^{-\ii \tilde{Q}}\mathcal{S}_{\mathfrak{w}}^{-1}u(x)
	\\ &= \mathcal{S}_{\mathfrak{w}} \ee^{-\ii \tilde{Q}}\ee^{-\ii \mathfrak{w}_\xi \cdot x - \frac{\ii}{2} \mathfrak{w}_\xi \cdot \mathfrak{w}_x} u(x+\mathfrak{w}_x)
	\\ &= \mathcal{S}_{\mathfrak{w}} \ee^{-\frac{\ii}{2}\tr M - \ii \mathfrak{w}_\xi \cdot \ee^{-\ii M}x - \frac{\ii}{2}\mathfrak{w}_\xi \cdot \mathfrak{w}_x} u(\ee^{-\ii M}x+ \mathfrak{w}_x)
	\\ &= \ee^{-\frac{\ii}{2} \tr M + \ii \mathfrak{w}_\xi \cdot x - \ii \mathfrak{w}_\xi \ee^{-\ii M}(x-\mathfrak{w}_x) - \ii \mathfrak{w}_\xi \cdot \mathfrak{w}_x} u(\ee^{-\ii M}(x-\mathfrak{w}_x) + \mathfrak{w}_x)
	\\ &= \ee^{-\frac{\ii}{2} \tr M + \ii \mathfrak{w}_\xi \cdot (1-\ee^{-\ii M})(x - \mathfrak{w}_x)} u(\ee^{-\ii M}x + (1-\ee^{-\ii M})\mathfrak{w}_x).
	\end{aligned}
\]
Note that an advantage of working on the FBI--Bargmann side is that $\mathcal{S}_{\mathfrak{w}}$ is defined on any function in $H_\Phi$, though the image may not belong to $H_\Phi$.

With this definition, note that
\begin{multline*}
	\|\ee^{-\ii \tilde{P}}u(x)\|_{H_\Phi}^2 = \int |\ee^{-\ii \tilde{P}}u(x)|^2 \ee^{-2\Phi(x)}\,\dd L(x)
	\\ = c\int |u(y)|^2 \exp\left( 2 \Im(\mathfrak{w}_\xi \cdot (1 - \ee^{\ii M})y) - 2\Phi(\ee^{\ii M}(y - (1-\ee^{-\ii M}))\mathfrak{w}_x)\right)\,\dd L(y),
\end{multline*}
for
\[
	c = \ee^{\Im\tr M + 2\Im(\mathfrak{w}_\xi \cdot (\ee^{\ii M} - 1)\mathfrak{w}_x)}.
\]
We therefore define the weight
\[
	\begin{aligned}
	\tilde{\Phi}(x) &= \Phi(\ee^{\ii M}(x - (1-\ee^{-\ii M}))\mathfrak{w}_x) -\Im(\mathfrak{w}_\xi \cdot (1 - \ee^{\ii M})x) 
	\\ &= \Phi(\ee^{\ii M}x) + \BigO(|x|), \quad |x|\to \infty.
	\end{aligned}
\]
The assumption that $\ee^{-\ii Q}$ is compact, via \eqref{eq_FBI_compactness}, is therefore a sufficient condition to ensure that $\ee^{-\ii P}$ is compact as well; see \cite[Prop.~2.23]{Aleman_Viola_2014b}.

\subsection{Positivity and boundedness}

In this work, we use three possible ellipticity criteria for an evolution operator: that the canonical transformation is strictly positive; that the Mehler formula \eqref{eq_Mehler} is integrable on $\Bbb{R}^{2n}$; and that the evolution operator is compact, which can be determined via \eqref{eq_FBI_compactness}. The goal of this section is to show that these three are identical, and also to show that this is equivalent to integrability of any Gaussian kernel associated with a canonical transformation.

Writing $\Bff{K} = \exp H_q$, the exponent in the Mehler formula \eqref{eq_Mehler} is
\[
	\sigma(\Bff{z}, \frac{1}{\ii}\tanh(H_q)\Bff{z}) = \sigma(\Bff{z}, \ii(1+\Bff{K})^{-1}(1-\Bff{K})\Bff{z}), \quad \Bff{z} \in \Bbb{R}^{2n}.
\]
The Mehler formula is integrable if and only if the real part of this exponent is negative definite on $\Bbb{R}^{2n}$. We prove the more general fact that the associated Hermitian form is negative definite on $\Bbb{C}^{2n}$.

\begin{proposition}\label{prop_Mehler_integrability}
Let $\Bff{K}$ be a linear canonical transformation $\Bff{K}$. Then the following three conditions are equivalent:
\begin{itemize} 
	\item $-1 \notin \opnm{Spec}\Bff{K}$ and
	\begin{equation}\label{eq_Mehler_integrability}
		\Re \sigma(\overline{\Bff{z}}, \ii(1+\Bff{K})^{-1}(1-\Bff{K}) \Bff{z}) < 0, \quad \forall \Bff{z} \in \Bbb{C}^{2n}\backslash \{0\},
	\end{equation}
	\item $-1 \notin \opnm{Spec}\Bff{K}$ and
	\[
		\Re \sigma(\Bff{z}, \ii(1+\Bff{K})^{-1}(1-\Bff{K}) \Bff{z}) < 0, \quad \forall \Bff{z} \in \Bbb{R}^{2n}\backslash \{0\},
	\]
	and
	\item $\Bff{K}$ is strictly positive.
\end{itemize}
\end{proposition}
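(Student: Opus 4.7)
The plan is to establish (1) $\Leftrightarrow$ (3) via a Cayley change of variable $\Bff{w} = (1+\Bff{K})^{-1}\Bff{z}$, and (1) $\Leftrightarrow$ (2) via the observation that the relevant Hermitian form has a \emph{real symmetric} matrix, so its positivity does not distinguish $\Bbb{R}^{2n}$ from $\Bbb{C}^{2n}$.

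First, I would dispose of well-definedness. If $\Bff{K}\Bff{z}_0 = -\Bff{z}_0$ with $\Bff{z}_0 \neq 0$, the sign factors cancel so $\sigma(\overline{\Bff{K}\Bff{z}_0}, \Bff{K}\Bff{z}_0) = \sigma(\overline{\Bff{z}_0}, \Bff{z}_0)$, contradicting Definition \ref{def_positive_canonical}. Hence (3) implies $-1 \notin \opnm{Spec}\Bff{K}$, and in each of the three conditions the matrix $T = (1+\Bff{K})^{-1}(1-\Bff{K})$ is well defined.

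For (1) $\Leftrightarrow$ (3), note that $\Bff{w} = (1+\Bff{K})^{-1}\Bff{z}$ is a bijection of $\Bbb{C}^{2n}$ satisfying $\Bff{w} = \tfrac{1}{2}(\Bff{z}+T\Bff{z})$ and $\Bff{K}\Bff{w} = \tfrac{1}{2}(\Bff{z}-T\Bff{z})$. Bilinear expansion of the symplectic form yields
\[
	\sigma(\overline{\Bff{K}\Bff{w}},\Bff{K}\Bff{w}) - \sigma(\overline{\Bff{w}},\Bff{w}) = -\tfrac{1}{2}\bigl[\sigma(\overline{\Bff{z}}, T\Bff{z}) + \sigma(\overline{T\Bff{z}}, \Bff{z})\bigr],
\]
and since $\sigma(\overline{T\Bff{z}}, \Bff{z}) = -\overline{\sigma(\overline{\Bff{z}}, T\Bff{z})}$ the bracket collapses to $2\ii\,\Im \sigma(\overline{\Bff{z}}, T\Bff{z})$. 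Multiplying by $\ii$ gives
\[
	\ii\bigl[\sigma(\overline{\Bff{K}\Bff{w}},\Bff{K}\Bff{w}) - \sigma(\overline{\Bff{w}},\Bff{w})\bigr] = -\Re \sigma(\overline{\Bff{z}}, \ii T\Bff{z}),
\]
establishing (1) $\Leftrightarrow$ (3).

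For (1) $\Leftrightarrow$ (2), the canonical identity $\Bff{K}^{\sigma\top} = \Bff{K}^{-1}$ (recalled in the proof of Theorem \ref{thm_shift_composition}) yields $T^{\sigma\top} = -T$, which, writing $\sigma(\Bff{z}_1,\Bff{z}_2) = \Bff{z}_1^\top \Bff{J}\Bff{z}_2$, is equivalent to $\Bff{J}T$ being a (complex) symmetric matrix. A short matrix computation using this symmetry then gives
\[
	\Re \sigma(\overline{\Bff{z}}, \ii T\Bff{z}) = -\overline{\Bff{z}}^\top (\Bff{J}\,\Im T)\Bff{z},
\]
and $\Bff{J}\,\Im T = \Im(\Bff{J}T)$ is a \emph{real symmetric} matrix. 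For any real symmetric $S$ one has $\overline{\Bff{z}}^\top S\Bff{z} = (\Re\Bff{z})^\top S\,\Re\Bff{z} + (\Im\Bff{z})^\top S\,\Im\Bff{z}$, so positive-definiteness on $\Bbb{R}^{2n}\setminus\{0\}$ and on $\Bbb{C}^{2n}\setminus\{0\}$ coincide, proving (1) $\Leftrightarrow$ (2). The main bookkeeping obstacle is verifying that $\Bff{J}\,\Im T$ is real symmetric from $T^{\sigma\top} = -T$; once that structural fact is in hand, the reduction from a sesquilinear positivity statement to a real one is automatic, and the Cayley step is a direct algebraic identity.
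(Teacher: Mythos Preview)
Your proof is correct and follows essentially the same route as the paper's. Both arguments dispose of $-1\notin\operatorname{Spec}\Bff{K}$ by noting that an eigenvector for $-1$ violates strict positivity, use the Cayley substitution $\Bff{w}=(1+\Bff{K})^{-1}\Bff{z}$ to identify the Hermitian form in (1) with the strict-positivity form in (3), and reduce (1) to (2) via the $\sigma$-antisymmetry $T^{\sigma\top}=-T$ (your phrasing in terms of the real symmetric matrix $\Bff{J}\,\Im T$ is just a matrix translation of the paper's observation that the cross terms $\sigma(\Bff{x},T\Bff{y})$ and $\sigma(\Bff{y},T\Bff{x})$ cancel).
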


\begin{proof}
The second and third conditions are equivalent because $(1+\Bff{K})^{-1}(1-\Bff{K})$ is antisymmetric with respect to $\sigma$, and therefore when $\Bff{z} = \Bff{x} + \ii \Bff{y}$ for $\Bff{x}, \Bff{y} \in \Bbb{R}^{2n}$,
\[
	\sigma(\overline{\Bff{z}}, \ii(1+\Bff{K})^{-1}(1-\Bff{K}) \Bff{z}) = \sigma(\Bff{x}, \ii(1+\Bff{K})^{-1}(1-\Bff{K}) \Bff{x}) + \sigma(\Bff{y}, \ii(1+\Bff{K})^{-1}(1-\Bff{K}) \Bff{y}).
\]
We work with the second condition because it allows us to make complex linear changes of variables.

Strict positivity implies that $\Bff{K}$ has no eigenvalue of modulus one; \emph{a fortiori}, if $\Bff{K}$ is strictly positive, $-1 \notin \opnm{Spec}\Bff{K}$.

Let $\Bff{w} = (1+\Bff{K})^{-1}$, so
\[
	\begin{aligned}
	\sigma(\overline{\Bff{z}}, \ii(1+\Bff{K})^{-1}(1-\Bff{K}) \Bff{z}) &= \ii \sigma(\overline{(1+\Bff{K})\Bff{w}}, (1-\Bff{K})\Bff{w})
	\\ &= \ii\left(\sigma(\overline{\Bff{w}}, \Bff{w}) - \sigma(\overline{\Bff{Kw}}, \Bff{Kw}) + \sigma(\overline{\Bff{w}}, \Bff{Kw}) - \sigma(\overline{\Bff{Kw}}, \Bff{w})\right)
	\\ &= \ii\left(\sigma(\overline{\Bff{w}}, \Bff{w}) - \sigma(\overline{\Bff{Kw}}, \Bff{Kw}) + 2\Re(\sigma(\overline{\Bff{w}}, \Bff{Kw}))\right).
	\end{aligned}
\]
Seeing now that
\[
	\Re \sigma(\overline{\Bff{z}}, \ii(1+\Bff{K})^{-1}(1-\Bff{K}) \Bff{z}) = \ii\left(\sigma(\overline{\Bff{w}}, \Bff{w}) - \sigma(\overline{\Bff{Kw}}, \Bff{Kw})\right),
\]
the proposition is obvious from Definiton \ref{def_positive_canonical} of strict positivity of $\Bff{K}$.
\end{proof}

Our third natural ellipticity condition is that $\ee^{-\ii Q}$ should be compact, which is determined by \eqref{eq_FBI_compactness}. This condition is also equivalent to strict positivity of the associated canonical transformation.

\begin{proposition}\label{prop_compact_strictly_pos}
For $q$ a supersymmetric quadratic form as defined in Definition \ref{def_supersymmetric} and setting $Q = q^w$, the operator $\exp(-\ii Q)$ from \eqref{eq_FBI_evolution} is compact if and only if $\exp H_q$ is strictly positive as in Definition \ref{def_positive_canonical}.
\end{proposition}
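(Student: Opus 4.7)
The plan is to reduce both conditions via the FBI--Bargmann transform $\mathfrak{T}$ adapted to $q$, so that each becomes an equivalent statement about the matrix $M$ in the normal form $\tilde{q}(x,\xi) = Mx\cdot\ii\xi$ of \eqref{eq_q_tilde}. Under $\mathfrak{T}$, compactness of $\ee^{-\ii Q}$ on $L^2(\Bbb{R}^n)$ is the same as compactness of $\ee^{-\ii \tilde{Q}}$ on $H_\Phi$, which by \cite[Thm.~2.9]{Aleman_Viola_2014b} is the criterion \eqref{eq_FBI_compactness}, i.e.\ $\Phi(\ee^{\ii M}x) > \Phi(x)$ for all $x \neq 0$. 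On the other hand, by \eqref{eq_similarity_FBI} and \eqref{eq_K_FBI}, strict positivity of $\Bff{K} = \exp H_q$ is equivalent to saying that the block-diagonal $\tilde{\Bff{K}} = \Bff{T}\Bff{K}\Bff{T}^{-1} = \opnm{diag}(\ee^{\ii M}, \ee^{-\ii M^\top})$ strictly increases the transplanted Hermitian form
\[
\tilde{H}(\Bff{w}) := \ii\sigma(\overline{\Bff{T}^{-1}\Bff{w}}, \Bff{T}^{-1}\Bff{w}), \quad \Bff{w} \in \Bbb{C}^{2n}.
\]

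The structural input relating $\tilde{H}$ to $\Phi$ is that $\Bff{T}$ maps $\Bbb{R}^{2n}$ bijectively onto the I-Lagrangian $\Lambda_\Phi = \{(x, (2/\ii)\partial_x \Phi(x))\}$. Since the form $\ii\sigma(\overline{\cdot}, \cdot)$ vanishes on $\Bbb{R}^{2n}$, $\tilde{H}$ vanishes on $\Lambda_\Phi$; together with non-degeneracy this forces $\tilde{H}$ to have signature $(n,n)$ with $\Lambda_\Phi$ a maximal isotropic subspace. Using the Bargmann-type FBI construction available for supersymmetric $q$ (as in \cite{Sjostrand_1974, Aleman_Viola_2014b}), in the splitting $\Bbb{C}^{2n} = \Bbb{C}^n_x \oplus \Bbb{C}^n_\xi$ that diagonalizes $\tilde{\Bff{K}}$ one obtains the clean decomposition
\[
\tilde{H}(x,\xi) = 2\Phi(x) - 2\Psi(\xi),
\]
where $\Psi$ is a strictly positive Hermitian form dual to $\Phi$ (with $\Psi = \Phi$ in the Bargmann example of Example \ref{ex_Bargmann}).

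Given this decomposition and the block structure of $\tilde{\Bff{K}}$, strict positivity becomes $[\Phi(\ee^{\ii M}x) - \Phi(x)] - [\Psi(\ee^{-\ii M^\top}\xi) - \Psi(\xi)] > 0$ for $(x,\xi) \neq (0,0)$, which by testing against $(x,0)$ and $(0,\xi)$ decouples into $\Phi(\ee^{\ii M}x) > \Phi(x)$ for $x \neq 0$ and $\Psi(\ee^{-\ii M^\top}\xi) < \Psi(\xi)$ for $\xi \neq 0$. Because $\ee^{\ii M}$ and $\ee^{-\ii M^\top}$ are inverse transposes and $\Psi$ is dual to $\Phi$, these two conditions are equivalent: writing each as a positive-definiteness statement for a Hermitian matrix and taking a transpose-conjugate shows that a $\Phi$-expansion by $\ee^{\ii M}$ is the same as a $\Psi$-contraction by $\ee^{-\ii M^\top}$. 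Thus strict positivity of $\Bff{K}$ is equivalent to \eqref{eq_FBI_compactness}, i.e.\ to compactness. The main obstacle I expect is the identification $\tilde{H} = 2\Phi - 2\Psi$ in suitable coordinates: a priori $\tilde{H}$ could carry cross terms between $x$ and $\xi$ since $\Bff{T}$ does not commute with complex conjugation, and eliminating them requires the specific supersymmetric FBI construction of \cite[Prop.~3.3]{Aleman_Viola_2014b}.
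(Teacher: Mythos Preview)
Your reduction to the FBI--Bargmann side is the right starting point, but the key structural claim
\[
\tilde{H}(x,\xi) = 2\Phi(x) - 2\Psi(\xi)
\]
is not justified and is in fact false for a general supersymmetric $q$. Block-diagonality of $\tilde{H}$ in the $(x,\xi)$-splitting means precisely that the two complex Lagrangians $\Lambda_1 = \Bff{T}^{-1}(\Bbb{C}^n_x\times\{0\})$ and $\Lambda_2 = \Bff{T}^{-1}(\{0\}\times\Bbb{C}^n_\xi)$ satisfy $\sigma(\overline{\Lambda_1},\Lambda_2) = 0$, i.e.\ $\overline{\Lambda_1} = \Lambda_2$ (since both are Lagrangian). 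But $\Lambda_1, \Lambda_2$ are the $H_q$-invariant planes $\Lambda^\pm = \{\xi = G_\pm x\}$ of Definition~\ref{def_supersymmetric}, so your decomposition is equivalent to $\overline{G_+} = G_-$. The supersymmetric hypothesis only requires $\pm\Im G_\pm > 0$; for instance $q(x,\xi) = (\xi - \ii x)(\xi + 2\ii x)$ gives $G_+ = \ii$, $G_- = -2\ii$, and the cross terms in $\tilde{H}$ do not vanish. The construction of \cite[Prop.~3.3]{Aleman_Viola_2014b} is designed to block-diagonalize $H_q$, not the Hermitian form $\ii\sigma(\bar\cdot,\cdot)$, and these two reductions are incompatible when $\overline{\Lambda^+}\neq\Lambda^-$. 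Without the clean splitting, your decoupling into separate $\Phi$- and $\Psi$-conditions, and the subsequent duality argument, both collapse.

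The paper sidesteps this entirely: rather than analysing $\tilde{H}$ on all of $\Bbb{C}^{2n}$, it invokes Proposition~\ref{prop_Mehler_integrability} to replace strict positivity by negativity of the Mehler exponent on $\Bbb{R}^{2n}$, transports that to $\Lambda_\Phi$, and then observes via the substitution $y = (\ee^{\ii M}+1)^{-1}x$ and the polarization $\Phi(x,y) = \Re(x\cdot\Phi'(y))$ that the resulting expression is exactly $4(\Phi(\ee^{\ii M}y) - \Phi(y))$. This never leaves the single real $2n$-dimensional plane $\Lambda_\Phi$, so the question of cross terms in the ambient $\Bbb{C}^{2n}$ simply does not arise.
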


\begin{proof}
Recall the FBI--Bargmann side flow $\tilde{\Bff{K}}$ in \eqref{eq_K_FBI}. If $-1 \in \opnm{Spec}\tilde{\Bff{K}}$ then $-1 \in \opnm{Spec}(\ee^{\ii M})$, so there exists some $x_0 \in \Bbb{C}^n \backslash \{0\}$ such that $\ee^{\ii M}x_0 = -x_0$. Because $\Phi$ is quadratic, if this occurs, then \eqref{eq_FBI_compactness} clearly cannot hold.

Assuming therefore that $-1 \notin \opnm{Spec}\Bff{K}$, by Proposition \ref{prop_Mehler_integrability}, $\Bff{K} = \exp H_q$ is strictly positive if and only if
\[
	\sigma((x,\xi), \ii(1+\tilde{\Bff{K}})^{-1}(1 - \tilde{\Bff{K}})(x,\xi)) = 2\ii(1+\ee^{\ii M})^{-1}(1-\ee^{\ii M})x\cdot \xi
\]
decays along 
\[
	\Lambda_\Phi = \Bff{T}(\Bbb{R}^{2n}) = \left\{\left(x, \frac{2}{\ii}\Phi'_x(x)\right)\right\}_{x \in \Bbb{C}^n}.
\]
(See, for instance, \cite[Thm.~13.5]{Zworski_2012} for a discussion of $\Lambda_\Phi$.) This is true if and only if
\begin{equation}\label{eq_compact_FBI_Mehler}
	\Re\left(2\ii(\ee^{\ii M}+1)^{-1}(\ee^{\ii M}-1)x\cdot \frac{2}{\ii}\Phi'_x(x)\right) > 0, \quad x \in \Bbb{C}^n \backslash \{0\}.
\end{equation}

Note that, since $\Phi$ is a real-quadratic form on $\Bbb{C}^n$, we can write
\[
	\Phi(x,y) = \Re \left(x \cdot \Phi'(y)\right)
\]
as the unique real-valued symmetric quadratic form on $\Bbb{C}^{2n}$ such that $\Phi(x,x) = \Phi(x)$. We then make the change of variables $y = (\ee^{\ii M} + 1)^{-1}x$ to compute
\[
	\begin{aligned}
	4 \Re((\ee^{\ii M}+1)^{-1}&(\ee^{\ii M}-1)x\cdot \Phi'_x(x)) = 4 \Phi((\ee^{\ii M}+1)^{-1}(\ee^{\ii M}-1)x, x)
	\\ &= 4 \Phi((\ee^{\ii M}-1)y, (\ee^{\ii M}+1)y)
	\\ &= 4 \Phi(\ee^{\ii M}y) + 4\Phi(\ee^{\ii M}y, y) - 4\Phi(y, \ee^{\ii M}y) - 4\Phi(y)
	\\ &= 4\left(\Phi(\ee^{\ii M}y) - \Phi(y)\right).
	\end{aligned}
\]
This computation makes it clear that \eqref{eq_compact_FBI_Mehler} holds if and only if \eqref{eq_FBI_compactness} holds, proving the proposition.
\end{proof}

Let $\varphi(x,y):\Bbb{R}^{2n}\to\Bbb{C}$ be a holomorphic quadratic form for which $\det \varphi''_{xy} \neq 0$. The integral operator
\[
	\mathfrak{T}_\varphi u(x) = \int_{\Bbb{R}^n}\ee^{\ii \varphi(x,y)}u(y)\,\dd y
\]
is associated with the canonical transformation
\begin{equation}\label{eq_K_phi_kernel}
	(y, -\varphi'_y(x,y)) \stackrel{\Bff{K}_\varphi}{\mapsto} (x, \varphi'_x(x,y)),
\end{equation}
or equivalently
\begin{equation}\label{eq_def_K_phi}
	\Bff{K}_\varphi = \left(\begin{array}{cc} -(\varphi''_{yx})^{-1} \varphi''_{yy} & -(\varphi''_{yx})^{-1} \\ \varphi''_{xy} - \varphi''_{xx} (\varphi''_{yx})^{-1} \varphi''_{yy} & -\varphi''_{xx} (\varphi''_{yx})^{-1}\end{array}\right).
\end{equation}
In the following proposition, we confirm via a standard computation that a Gaussian kernel for which $\det \varphi''_{xy} \neq 0$ is associated with a canonical transformation which is strictly positive if and only if the Gaussian kernel is non-degenerate (that is, integrable).

\begin{proposition}\label{prop_Gaussian_nondeg_iff_positive}
If $\varphi:\Bbb{R}^n_x \times \Bbb{R}^n_y \to \Bbb{C}$ is a quadratic form for which $\det \varphi''_{xy} \neq 0$, then $\Bff{K}_\varphi$ in \eqref{eq_def_K_phi} is strictly positive if and only if $\Im \varphi$ is positive definite.
\end{proposition}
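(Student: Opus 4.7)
The plan is to reduce strict positivity of $\Bff{K}_\varphi$ to positive definiteness of $\Im \varphi$ by a direct computation of $\ii[\sigma(\overline{\Bff{K}_\varphi \Bff{z}}, \Bff{K}_\varphi \Bff{z}) - \sigma(\overline{\Bff{z}}, \Bff{z})]$ in terms of $\varphi$, using the defining relation \eqref{eq_K_phi_kernel}. The hypothesis $\det \varphi''_{xy} \neq 0$ is used twice: once to make the canonical transformation $\Bff{K}_\varphi$ well-defined, and once to provide a convenient bijective parametrization.

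First, I would parametrize $\Bff{z} \in \Bbb{C}^{2n}$ by $z = (x,y) \in \Bbb{C}^{2n}$ via $\Bff{z} = (y, -\varphi'_y(x,y))$, under which $\Bff{K}_\varphi \Bff{z} = (x, \varphi'_x(x,y))$. The map $(x,y) \mapsto \Bff{z}$ is linear with Jacobian of determinant $\pm \det \varphi''_{yx} \neq 0$, hence a bijection on $\Bbb{C}^{2n}$. A short calculation from $\sigma((x_1,\xi_1),(x_2,\xi_2)) = \xi_1 \cdot x_2 - \xi_2 \cdot x_1$ gives the general identity $\ii \sigma(\overline{(a,b)}, (a,b)) = 2 \Im(b \cdot \overline{a})$, so
\[
	\ii\bigl[\sigma(\overline{\Bff{K}_\varphi \Bff{z}}, \Bff{K}_\varphi \Bff{z}) - \sigma(\overline{\Bff{z}}, \Bff{z})\bigr] = 2\Im\bigl(\varphi'_x(x,y) \cdot \overline{x} + \varphi'_y(x,y) \cdot \overline{y}\bigr).
\]

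Next, writing $\varphi(z) = \tfrac{1}{2} z^\top A z$ with $A$ the complex-symmetric Hessian, the bracket on the right is the polarized form of Euler's identity, $\varphi'_x \cdot \overline{x} + \varphi'_y \cdot \overline{y} = \overline{z}^\top A z$. Splitting $A = \Re A + \ii \Im A$ with $\Re A, \Im A$ real symmetric, one checks that $\overline{z}^\top (\Re A) z$ is real (so contributes only to the real part), and $\Im(\overline{z}^\top A z) = \overline{z}^\top (\Im A) z$. Thus the displayed expression equals $2\, \overline{z}^\top (\Im A)\, z$, the standard Hermitian extension to $\Bbb{C}^{2n}$ of the real quadratic form $\Im \varphi$.

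Finally, since $(x,y) \mapsto \Bff{z}$ is bijective, strict positivity of $\Bff{K}_\varphi$ is equivalent to $\overline{z}^\top (\Im A) z > 0$ for all nonzero $z \in \Bbb{C}^{2n}$. Writing $z = u + \ii v$ with $u,v \in \Bbb{R}^{2n}$ and using that $\Im A$ is real symmetric, $\overline{z}^\top (\Im A) z = u^\top (\Im A) u + v^\top (\Im A) v$, so this Hermitian positive-definiteness is equivalent to positive-definiteness of $\Im A$ on $\Bbb{R}^{2n}$, i.e.\ to $\Im \varphi > 0$. I do not foresee any serious obstacle; the only thing to watch is the sign and factor bookkeeping between $\sigma$, $\nabla\varphi$, and the Hermitian extension, which the identity $\ii\sigma(\overline{(a,b)},(a,b)) = 2\Im(b\cdot\overline a)$ makes transparent.
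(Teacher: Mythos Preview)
Your argument is correct and considerably cleaner than the paper's. The paper proceeds by explicitly computing the block entries of $\overline{\Bff{K}_\varphi}^{-1}\Bff{K}_\varphi$, writing the positivity form $\Psi(x,\xi)=\ii\sigma((\overline{x},\overline{\xi}),(\overline{\Bff{K}_\varphi}^{-1}\Bff{K}_\varphi-1)(x,\xi))$ as a Hermitian matrix in those blocks, and then completing the square in $\xi$ to reduce to a Schur-complement condition that matches positive definiteness of $\Im\varphi$. Your parametrization $\Bff{z}=(y,-\varphi'_y(x,y))$, $\Bff{K}_\varphi\Bff{z}=(x,\varphi'_x(x,y))$ bypasses all of this matrix algebra: the identity $\ii\sigma(\overline{(a,b)},(a,b))=2\Im(b\cdot\overline a)$ turns the positivity form directly into $2\Im(\overline z^\top A z)=2\,\overline z^\top(\Im A)z$, after which the equivalence with $\Im\varphi>0$ is immediate. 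The cost is that one must check the bijectivity of the parametrization (which you do), whereas the paper's computation stays in the standard $(x,\xi)$ coordinates throughout; the gain is that your route avoids the explicit block inverse and the Schur complement entirely, and makes transparent why the generating-function picture is the natural one here.
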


\begin{proof}
The adjoint of $\mathfrak{T}_\varphi$ is $\mathfrak{T}_{\varphi^*}$ where
\[
	\varphi^*(x,y) = -\overline{\varphi(\overline{y}, \overline{x})},
\]
so it comes as no surprise that, as may be verified directly,
\[
	\Bff{K}_{\varphi^*} = \overline{\Bff{K}_\varphi}^{-1}.
\]
We then write
\[
	\Bff{A} := \Bff{K}_{\varphi^*}\Bff{K}_\varphi = \left(\begin{array}{cc} A_{11} & A_{12} \\ A_{21} & A_{22}\end{array}\right),
\]
where
\[
	\begin{aligned}
	A_{11} &= (\overline{\varphi''_{xy}})^{-1}\left( \varphi''_{xy} - 2\ii \Im(\varphi''_{xx})(\varphi''_{yx})^{-1}\varphi''_{yy}\right)
	\\ A_{12} &= (\overline{\varphi''_{xy}})^{-1} (-2\ii \Im \varphi''_{xx})(\varphi''_{yx})^{-1}
	\\ A_{21} &= 2\ii \left(\Im \left(\overline{\varphi''_{yx}}(\varphi''_{yx})^{-1}\varphi''_{yy} \right) + \overline{\varphi''_{yy}}(\overline{\varphi''_{xy}})^{-1} \Im (\varphi''_{xx})(\varphi''_{yx})^{-1}\varphi''_{yy}\right)
	\\ A_{22} &= \left(\overline{\varphi''_{yx}} + 2\ii \overline{\varphi''_{yy}}(\overline{\varphi''_{xy}})^{-1} \Im (\varphi''_{xx})\right)(\varphi''_{yx})^{-1}.
	\end{aligned}
\]

The map $\Bff{K}_\varphi$ is strictly positive if and only if
\[
	\begin{aligned}
	\Psi(x,\xi) &:= \ii \sigma((\overline{x}, \overline{\xi}), (\overline{\Bff{K}_\varphi}^{-1}\Bff{K}_\varphi - 1)(x,\xi)) 
	\\ &= (\overline{x}, \overline{\xi})\cdot \left(\begin{array}{cc} -\ii A_{21} & -\ii (A_{22} - 1) \\ \ii (A_{11} - 1) & \ii A_{12}\end{array}\right)\left(\begin{array}{c} x \\ \xi \end{array}\right)
	\end{aligned}
\]
is a positive definite quadratic form. (Note that the matrix is Hermitian.) Since
\[
	\Psi(0, \varphi''_{yx}\eta) = 2\overline{\eta} \cdot (\Im \varphi''_{xx})\eta,
\]
strict positivity of $\Bff{K}_\varphi$ implies that $\Im \varphi''_{xx}$ is positive definite.

Completing the square,
\[
	\begin{aligned}
	\Psi(x,\xi) &= (\overline{\xi} + \overline{A_{12}^{-1}(A_{11}-1)x})\cdot \ii A_{12}(\xi + A_{12}^{-1}(A_{11}-1)x) 
	\\ & \quad + \overline{x}\cdot \ii((A_{22}-1)A_{12}^{-1}(A_{11}-1)-A_{21})x.
	\end{aligned}
\]
This shows that $\Psi$ is positive definite if and only if this second term is positive definite, and a direct computation shows that
\[
	\ii((A_{22}-1)A_{12}^{-1}(A_{11}-1)-A_{21}) = 2\Im(\varphi''_{yy}) - 2\Im(\varphi''_{yx})(\Im \varphi''_{xx})^{-1} \Im(\varphi''_{xy}).
\]
By the same argument via difference of squares, this matrix and $\Im \varphi''_{xx}$ are both positive definite if and only if $\Im \varphi$ is a positive definite quadratic form; this proves the proposition.
\end{proof}

\subsection{Supersymmetry and strictly positive linear canonical transformations}\label{ssec_supersymmetric_onto}

In this work we focus on quadratic forms yielding strictly positive Hamilton flows. In this section, we show that this hypothesis implies supersymmetric structure, and therefore allows us to apply the definition of $\ee^{-\ii P}$ in Section \ref{ssec_Fock}. Furthermore, we see that every strictly positive linear canonical transformation is the Hamilton flow of some quadratic form, which is necessarily supersymmetric.

\begin{proposition}\label{prop_positivity_implies_supersymmetry}
Let $q:\Bbb{R}^{2n} \to \Bbb{C}$ be a quadratic form such that $\exp H_q$ is strictly positive. Then $q$ is supersymmetric in the sense of Definition \ref{def_supersymmetric}.
\end{proposition}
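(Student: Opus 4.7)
The plan is to recover the supersymmetric decomposition from the stable and unstable invariant Lagrangians of $H_q$. Strict positivity of $\Bff{K} = \exp H_q$ forbids any eigenvalue on the unit circle (if $\Bff{K}\Bff{v} = \ee^{\ii\theta}\Bff{v}$, then $\ii\sigma(\overline{\Bff{K}\Bff{v}},\Bff{K}\Bff{v}) = \ii\sigma(\overline{\Bff{v}},\Bff{v})$, contradicting Definition \ref{def_positive_canonical}). Consequently $H_q$ has no purely imaginary eigenvalue, so I may split $\Bbb{C}^{2n} = \Lambda_+ \oplus \Lambda_-$ into the generalized eigenspaces of $H_q$ corresponding to $\Re\lambda > 0$ and $\Re\lambda < 0$. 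Because $H_q$ is a Hamiltonian matrix its spectrum is symmetric under $\lambda \mapsto -\lambda$, so $\dim \Lambda_\pm = n$.

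Next I will verify that $\Lambda_\pm$ are Lagrangian and carry a definite Hermitian form. Symplectic invariance of the flow gives $\ee^{t(\lambda_1 + \lambda_2)}\sigma(\Bff{v}_1,\Bff{v}_2) = \sigma(\Bff{v}_1,\Bff{v}_2)$ for eigenvectors $\Bff{v}_j$ of $H_q$ with eigenvalues $\lambda_j$, and a standard generalized-eigenvector extension shows $\sigma$ vanishes on $\Lambda_\pm \times \Lambda_\pm$; combined with $\dim \Lambda_\pm = n$, the subspaces are Lagrangian. For $\Bff{v} \in \Lambda_+$ we have $\Bff{K}^{-N}\Bff{v} \to 0$, so iterating the strict positivity inequality backwards yields $\ii\sigma(\overline{\Bff{v}},\Bff{v}) > \ii\sigma(\overline{\Bff{K}^{-N}\Bff{v}},\Bff{K}^{-N}\Bff{v}) \to 0$, hence $\ii\sigma(\overline{\Bff{v}},\Bff{v}) > 0$ on $\Lambda_+\setminus\{0\}$. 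The analogous forward iteration gives $\ii\sigma(\overline{\Bff{v}},\Bff{v}) < 0$ on $\Lambda_-\setminus\{0\}$.

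Since $\ii\sigma(\overline{\Bff{v}},\Bff{v}) = 0$ whenever $\Bff{v} = (0,\xi_0)$, the definiteness on $\Lambda_\pm$ forces $\Lambda_\pm \cap \{x=0\} = \{0\}$, so each $\Lambda_\pm$ is a graph $\{\xi = G_\pm x\}$. The Lagrangian property is equivalent to $G_\pm = G_\pm^\top$, and the formula $\ii\sigma(\overline{(x,G_\pm x)},(x,G_\pm x)) = 2\overline{x}^\top(\Im G_\pm) x$ translates the sign information into $\pm \Im G_\pm > 0$, matching Definition \ref{def_supersymmetric}. Definiteness of opposite signs also gives $\Lambda_+ \cap \Lambda_- = \{0\}$, so $G_+ - G_-$ is invertible and $y_\pm := \xi - G_\pm x$ furnish global linear coordinates on $\Bbb{C}^{2n}$.

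Finally I will deduce the product structure of $q$. Invariance of the Lagrangian $\Lambda_+$ under $H_q$ gives, for all $\Bff{z},\Bff{w}\in\Lambda_+$, the identity $q''(\Bff{z},\Bff{w}) = \sigma(\Bff{w},H_q\Bff{z}) = 0$, since $H_q\Bff{z} \in \Lambda_+$ and $\Lambda_+$ is isotropic; the same holds on $\Lambda_-$. Therefore $q$ vanishes both on $\{y_+ = 0\}$ and on $\{y_- = 0\}$. Writing $q$ as a quadratic form in the coordinates $(y_+, y_-)$ and discarding the $y_+\cdot y_+$ and $y_-\cdot y_-$ blocks forced to zero by the two vanishings leaves $q(x,\xi) = B\, y_+ \cdot y_- = B(\xi - G_+ x)\cdot(\xi - G_- x)$ for some $B \in \Bbb{C}^{n\times n}$, which is the supersymmetric form. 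The step most prone to subtlety is the backwards/forwards iteration identifying the sign of $\ii\sigma(\overline{\Bff{v}},\Bff{v})$ on $\Lambda_\pm$, since it relies on extending strict positivity across all iterates of $\Bff{K}$ and on using convergence of $\Bff{K}^{\mp N}\Bff{v}\to 0$ within generalized (not just proper) eigenspaces.
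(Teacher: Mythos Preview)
Your argument is correct and essentially self-contained, whereas the paper's proof outsources the two hardest steps. The paper invokes \cite[Prop.~3.3]{Aleman_Viola_2014b} to say that it suffices to find $H_q$-invariant positive and negative Lagrangian planes, and then, rather than checking positivity directly, it introduces the auxiliary elliptic form $q_1(\Bff{z}) = \sigma(\Bff{z}, -\ii(1+\Bff{K})^{-1}(1-\Bff{K})\Bff{z})$ (with $\Re q_1 > 0$ by Proposition~\ref{prop_Mehler_integrability}), quotes Sj\"ostrand's result that the generalized eigenspaces of $H_{q_1}$ split into positive and negative Lagrangians, and finally uses the Cayley-type identity $\Bff{K} = (1+\tfrac{\ii}{2}H_{q_1})^{-1}(1-\tfrac{\ii}{2}H_{q_1})$ to match those spaces with the $\Re\lambda \gtrless 0$ generalized eigenspaces of $H_q$.

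Your route avoids both citations: you verify the Lagrangian property directly from symplectic invariance of the flow, and you obtain the sign of $\ii\sigma(\overline{\Bff{v}},\Bff{v})$ on $\Lambda_\pm$ by a clean dynamical iteration of the strict positivity inequality together with $\Bff{K}^{\mp N}\Bff{v}\to 0$. You then redo the content of \cite[Prop.~3.3]{Aleman_Viola_2014b} explicitly (graph representation, symmetry of $G_\pm$, sign of $\Im G_\pm$, and the vanishing of $q$ on each $\Lambda_\pm$ forcing the product form). The payoff of the paper's approach is brevity and a tie-in with the Mehler exponent $q_1$ used elsewhere in Section~\ref{sec_generators_canonical}; the payoff of yours is that it is elementary and makes the mechanism transparent without appealing to the elliptic theory of Sj\"ostrand. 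The one point you flagged as delicate---the iteration argument on generalized eigenspaces---is fine: the sequence $\ii\sigma(\overline{\Bff{K}^{-N}\Bff{v}},\Bff{K}^{-N}\Bff{v})$ is strictly decreasing with limit $0$, so its initial term is strictly positive.
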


\begin{proof} By \cite[Prop.~3.3]{Aleman_Viola_2014b}, it is enough to show that $H_q$ has invariant subspaces $\Lambda^+, \Lambda^-$ which are positive and negative Lagrangian planes. The only fact \cite[Prop.~3.3]{Sjostrand_1974} we need about this type of invariant subspace is that, if $q_1$ is a quadratic form on $\Bbb{R}^{2n}$ such that $\Re q_1$ is positive definite, then we may define the two subspaces $\Lambda^\pm(q_1)$ as the sum of generalized eigenspaces
\begin{equation}\label{eq_Lambda_pm_elliptic}
	\Lambda^\pm(q_1) = \bigoplus_{\pm \Im \lambda > 0} \ker(\frac{1}{2}H_{q_1} - \lambda)^n.
\end{equation}
(Since $n = \dim \Lambda^\pm(q_1)$, we know that $\ker(\frac{1}{2}H_{q_1} - \lambda)^n$ always gives the generalized eigenspace of $\frac{1}{2}H_{q_1}$ corresponding to an eigenvalue $\lambda$.)

Let 
\begin{equation}\label{eq_q1_Mehler_canonical}
	q_1(\Bff{z}) = \sigma(\Bff{z}, -\ii(1+\Bff{K})^{-1}(1-\Bff{K})\Bff{z}).
\end{equation}
By Proposition \ref{prop_Mehler_integrability}, $\Re q_1$ is positive definite on $\Bbb{R}^{2n}$, so we may apply \eqref{eq_Lambda_pm_elliptic}. Furthermore, 
\begin{equation}\label{eq_Hq1_ssym}
	\frac{1}{2}H_{q_1}= -\ii(1+\Bff{K})^{-1}(1-\Bff{K}),
\end{equation}
which implies
\[
	\Bff{K} = (1+\frac{\ii}{2} H_{q_1})^{-1}(1-\frac{\ii}{2} H_{q_1}),
\]
and the linear fractional transformation $f(\zeta) = (1+\ii \zeta)^{-1}(1-\ii \zeta)$ maps $\{\pm \Im \zeta > 0\}$ to $\{|\zeta|^{\pm 1} > 1\}$. (It suffices to check that $f(\pm 1) = \mp \ii$ and $f(0) = 1$, so the real axis is mapped to the unit circle, and that $f(-\ii) = 0$.) Since $\Bff{K} = \exp H_q$, we compute
\[
	\begin{aligned}
	\Lambda^\pm(q_1) &= \bigoplus_{\pm \Im \lambda > 0} \ker(\frac{1}{2}H_{q_1} - \lambda)^n
	\\ &= \bigoplus_{|\lambda|^{\pm 1} > 1} \ker(\Bff{K} - \lambda)^n
	\\ &= \bigoplus_{\pm \Re \lambda > 0} \ker(H_q - \lambda)^n.
	\end{aligned}
\]

Direct sums of generalized eigenspaces of $H_q$ are $H_q$-invariant, so the hypotheses of \cite[Prop.~3.3]{Aleman_Viola_2014b} are satisfied and Proposition \ref{prop_positivity_implies_supersymmetry} is proved.
\end{proof}

\begin{remark}
The fact that $\opnm{Spec} H_q|_{\Lambda^+} \subset \{\Re \lambda > 0\}$ is a necessary and sufficient condition for existence of some $s_0 \geq 0$ such that $\ee^{-\ii s Q}$ is compact for all $s > s_0$. This is more or less immediate from the compactness condition \eqref{eq_FBI_compactness} and the fact \cite[Cor.~3.4]{Aleman_Viola_2014b} that, for $M$ in \eqref{eq_q_tilde},
\[
	\opnm{Spec}M = -\ii \opnm{Spec}H_q|_{\Lambda^+} = \ii \opnm{Spec}H_q|_{\Lambda^-}.
\]
\end{remark}

Next, we prove that an arbitrary strictly positive canonical transformation corresponds to the Hamilton flow of a supersymmetric quadratic form (in fact, infinitely many). As a result, the set of Schr\"odinger evolutions of supersymmetric quadratic forms suffices to describe the set of Fourier integral operators associated to strictly positive canonical transformations in the sense of \cite{Hormander_1995}. We perform the proof on the FBI--Bargmann side, because there we can make a natural choice of $\log \Bff{K}$.

\begin{proposition}\label{prop_supersymmetric_onto}
Let $\Bff{K}:\Bbb{C}^{2n} \to \Bbb{C}^{2n}$ be a strictly positive linear canonical transformation. Then there exists a supersymmetric quadratic form $q$ such that $\exp H_q = \Bff{K}$.
\end{proposition}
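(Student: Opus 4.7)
The plan is to run an FBI--Bargmann reduction that simultaneously block-diagonalizes $\Bff{K}$, then take a matrix logarithm on the Bargmann side and pull back to define $q$. Since Proposition \ref{prop_positivity_implies_supersymmetry} already shows that any quadratic form whose Hamilton flow is strictly positive must be supersymmetric, it suffices to exhibit any quadratic form $q$ with $\exp H_q = \Bff{K}$; supersymmetry then comes for free.

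\textbf{Auxiliary elliptic form and Lagrangian subspaces.} Strict positivity of $\Bff{K}$ forces $-1 \notin \Spec \Bff{K}$, so the Cayley-type quadratic form
\[
    q_1(\Bff{z}) = \sigma(\Bff{z}, -\ii(1+\Bff{K})^{-1}(1-\Bff{K})\Bff{z})
\]
is well defined, and Proposition \ref{prop_Mehler_integrability} shows $\Re q_1$ is positive definite on $\Bbb{R}^{2n}$. Thus $q_1$ is elliptic in the sense of Sjöstrand \cite{Sjostrand_1974}, and $H_{q_1}$ admits the Lagrangian invariant subspaces $\Lambda^\pm$ of \eqref{eq_Lambda_pm_elliptic}. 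By construction $\frac{1}{2}H_{q_1} = -\ii(1+\Bff{K})^{-1}(1-\Bff{K})$, so $\Bff{K}$ and $H_{q_1}$ are rational functions of one another; in particular $\Lambda^\pm$ are also $\Bff{K}$-invariant and are transverse (disjoint spectral parts).

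\textbf{Block-diagonalization and logarithm.} Choosing dual symplectic bases of the transverse Lagrangians $\Lambda^\pm$ produces a complex linear canonical transformation $\Bff{T}:\Bbb{C}^{2n}\to\Bbb{C}^{2n}$ with $\Bff{T}(\Lambda^+) = \{\xi = 0\}$ and $\Bff{T}(\Lambda^-) = \{x = 0\}$. Because $\Bff{K}$ preserves $\Lambda^\pm$ and is canonical, $\tilde{\Bff{K}} := \Bff{T}\Bff{K}\Bff{T}^{-1}$ is block-diagonal of the form $\mathrm{diag}(A, (A^\top)^{-1})$ for some $A \in \mathrm{GL}_n(\Bbb{C})$. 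Choose any $M \in \Bbb{M}_{n\times n}(\Bbb{C})$ with $\ee^{\ii M} = A$ (an invertible matrix always has a logarithm). Setting $\tilde{q}(x,\xi) = Mx \cdot \ii \xi$, the computation leading to \eqref{eq_K_FBI} gives $\exp H_{\tilde{q}} = \mathrm{diag}(\ee^{\ii M}, \ee^{-\ii M^\top}) = \tilde{\Bff{K}}$.

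\textbf{Pullback and conclusion.} Define $q = \tilde{q}\circ \Bff{T}$. This is a degree-2 complex polynomial on $\Bbb{C}^{2n}$, hence a complex-valued quadratic form on $\Bbb{R}^{2n}$. The similarity \eqref{eq_similarity_FBI} yields $H_q = \Bff{T}^{-1} H_{\tilde{q}} \Bff{T}$, so $\exp H_q = \Bff{T}^{-1}\tilde{\Bff{K}}\Bff{T} = \Bff{K}$. Since $\exp H_q$ is strictly positive by hypothesis, Proposition \ref{prop_positivity_implies_supersymmetry} immediately implies $q$ is supersymmetric. The main conceptual obstacle is arranging the simultaneous block-diagonalization of $\Bff{K}$ via $\Lambda^\pm$; once these subspaces are read off from the elliptic spectral theory of $q_1$, everything else is algebraic. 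No \emph{a priori} distinguished branch of $\log A$ is needed for existence, though selecting $M$ with spectrum in the upper half-plane would additionally align the construction with the compactness condition \eqref{eq_FBI_compactness} on the Bargmann side.
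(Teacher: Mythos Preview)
Your proof is correct and follows essentially the same approach as the paper: both use the auxiliary elliptic form $q_1$ to produce a canonical transformation $\Bff{T}$ that block-diagonalizes $\Bff{K}$, then take a matrix logarithm on the block-diagonal side and pull back, with supersymmetry read off from Proposition~\ref{prop_positivity_implies_supersymmetry}. The only cosmetic difference is that the paper obtains $\Bff{T}$ by invoking the FBI--Bargmann reduction for the supersymmetric form $q_1$ (Section~\ref{ssec_Fock}), whereas you build $\Bff{T}$ directly by choosing dual symplectic bases for the transverse Lagrangians $\Lambda^\pm$; these two constructions amount to the same thing.
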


\begin{proof}
Let $\Bff{K}$ be a strictly positive canonical transformation, and let $q_1$ be the quadratic form with positive definite real part from \eqref{eq_q1_Mehler_canonical}. Since $q_1$ is supersymmetric, as in Section \ref{ssec_Fock} we may choose some FBI--Bargmann transform $\mathfrak{T}$ associated with a canonical transformation $\Bff{T}$ such that
\[
	\tilde{q}_1(x,\xi) := (q_1 \circ \Bff{T}^{-1})(x,\xi) = M_1x\cdot \ii \xi.
\]
But on the other hand, because $\Bff{T}$ is canonical, when
\[
	\tilde{\Bff{K}} = \Bff{T} \Bff{K}\Bff{T}^{-1},
\]
we have
\[
	\tilde{q}_1(\Bff{z}) = \sigma(\Bff{z}, -\ii(1+\tilde{\Bff{K}})^{-1}(1-\tilde{\Bff{K}})\Bff{z}).
\]
Since $\tilde{\Bff{K}}$ is canonical, $(1+\tilde{\Bff{K}})^{-1}(1-\tilde{\Bff{K}})$ is antisymmetric with respect to $\sigma$, so 
\begin{equation}\label{eq_M1_tildeK}
	H_{\tilde{q}_1} = -2\ii (1+\tilde{\Bff{K}})^{-1}(1-\tilde{\Bff{K}}) = \left(\begin{array}{cc} \ii M_1 & 0 \\ 0 & -\ii M_1^\top\end{array}\right).
\end{equation}

By \cite[Cor.~3.4]{Aleman_Viola_2014b},
\[
	\Spec \frac{1}{2} M_1 \subset \Spec \frac{1}{2\ii}H_{q_1} = \Spec (\Bff{K}+1)^{-1}(\Bff{K}-1),
\]
which transparently does not contain $\pm 1$. Solving for $\tilde{\Bff{K}}$ in \eqref{eq_M1_tildeK},
\[
	\tilde{\Bff{K}} = \left(\begin{array}{cc} M_2 & 0 \\ 0 & (M_2^\top)^{-1}\end{array}\right)
\]
when
\[
	M_2 = (1-M_1/2)^{-1}(1+M_1/2).
\]
Since $M_2$ is invertible, we may define via the Jordan normal form
\begin{equation}\label{eq_log_FBI_side}
	M = \frac{1}{\ii}\log M_2.
\end{equation}

Setting
\[
	\tilde{q}(x,\xi) = Mx\cdot \ii \xi,
\]
we have
\[
	\exp H_{\tilde{q}} = \left(\begin{array}{cc} \exp \log M_2 & 0 \\ 0 & \exp(-\log M_2^\top)\end{array}\right) = \tilde{\Bff{K}}.
\]
Therefore let
\[
	q = \tilde{q}\circ \Bff{T}.
\]
By the induced similarity relation on Hamilton maps, $\exp H_q = \Bff{K}$, and $q$ is supersymmetric by Proposition \ref{prop_positivity_implies_supersymmetry}. Note that, by \cite[Prop.~3.3]{Aleman_Viola_2014b}, supersymmetry is already implied by the existence of a reduction to $Mx\cdot \ii \xi$ given by $\Bff{T}$. This proves the proposition.
\end{proof}

\begin{remark}
The matrix $M$ in \eqref{eq_log_FBI_side} may be modified on any Jordan block by adding $2\pi$ to the associated eigenvalue, so there are infinitely many quadratic forms corresponding to any given strictly positive linear canonical transformation. One can freely modify the sign associated with $\ee^{-\ii Q}$ unless the size of every Jordan block in the Jordan normal form $M_2$ is even. Note that it is possible to find a strictly positive canonical transformation $\Bff{K}$ such that that only one such quadratic form obeys the ellipticity condition $\Im q \leq 0$, as can be seen by taking $\exp H_{\eps q_\theta}$ from Example \ref{ex_Davies} with $\theta \neq 0$ and $\eps > 0$ sufficiently small depending on $\theta$. 
\end{remark}

Proposition \ref{prop_supersymmetric_onto} in the special case $\overline{\Bff{K}}^{-1} = \Bff{K}$ deserves particular attention due to its importance in Section \ref{ssec_norm_quadratic}. The hypotheses we specify are essentially to rule out a situation like $\Bff{K} = -1$ which is associated with the harmonic oscillator evolution $\ee^{-\ii \pi Q_0}u(x) = -\ii u(-x)$.

\begin{proposition}\label{prop_real_log}
Let $\Bff{K}$ be a strictly positive linear canonical transformation for which $\overline{\Bff{K}}^{-1} = \Bff{K}$. Then the following are equivalent.
\begin{enumerate}
\item There exists a quadratic form $q:\Bbb{R}^{2n}\to \Bbb{C}$ such that $-\ii q$ is real and $\Bff{K} = \exp H_q$.
\item There exists a quadratic form $q:\Bbb{R}^{2n}\to \Bbb{C}$ such that $\Bff{K} = \exp H_q$ and $\langle u, \ee^{-\ii q^w} u\rangle \geq 0$ for all $u \in L^2(\Bbb{R}^n)$.
\item There exists a quadratic form $q:\Bbb{R}^{2n}\to \Bbb{C}$ such that $\Bff{K} = \exp H_q$ and $\langle u, \ee^{-\ii q^w} u\rangle \leq 0$ for all $u \in L^2(\Bbb{R}^n)$.
\end{enumerate}
\end{proposition}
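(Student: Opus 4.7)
\emph{Plan.} The equivalence will be established as $(1)\Rightarrow(2)$ directly, $(2)\Leftrightarrow(3)$ via a $2\pi$-shift of $q$ that flips the sign of $\ee^{-\ii Q}$, and the main step $(2)\Rightarrow(1)$ via an FBI--Bargmann reduction adapted to a real normal form of the canonical quadratic form $q_1$ from \eqref{eq_q1_Mehler_canonical}. The preparation is this: under $\overline{\Bff{K}}^{-1}=\Bff{K}$ we have $(1+\overline{\Bff{K}})^{-1}(1-\overline{\Bff{K}})=-(1+\Bff{K})^{-1}(1-\Bff{K})$ (multiply both sides by $(1+\overline{\Bff{K}})$ and $(1+\Bff{K})$ and use $\overline{\Bff{K}}\Bff{K}=I$), hence $\overline{q_1(\Bff{z})}=q_1(\Bff{z})$ for $\Bff{z}\in\Bbb{R}^{2n}$, and by Proposition \ref{prop_Mehler_integrability} the form $q_1$ is positive definite. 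Williamson's theorem and the coordinate-wise Bargmann transform (Example \ref{ex_Bargmann}) then furnish an FBI--Bargmann $\mathfrak{T}$ with weight $\Phi(x)=\frac{1}{2}|x|^2$ under which $\tilde{q}_1=M_1x\cdot\ii\xi$ with $M_1$ diagonal and pure imaginary.

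The direction $(1)\Rightarrow(2)$ is immediate: with $q=\ii r$ and $r:\Bbb{R}^{2n}\to\Bbb{R}$, the operator $-\ii Q=r^w$ is self-adjoint, by Proposition \ref{prop_compact_strictly_pos} the operator $\ee^{-\ii Q}=\ee^{r^w}$ is compact, and the functional calculus gives $\langle u,\ee^{r^w}u\rangle=\|\ee^{r^w/2}u\|^2\geq 0$. For $(2)\Leftrightarrow(3)$, any $q$ with $\exp H_q=\Bff{K}$ Poisson-commutes with $q_1$ since $H_{q_1}$ is a function of $\Bff{K}$ by \eqref{eq_Hq1_ssym}, so $\mathfrak{T}$ simultaneously reduces $q$ to $\tilde{q}=Mx\cdot\ii\xi$. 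A direct pullback computation using $\Bff{B}_0^{-1}$ and the identity $(a-\ii b)(-\ii a+b)=-\ii(a^2+b^2)$ identifies $\ii x_j\xi_j$ on the FBI side with $R_j=(x_j^2+\xi_j^2)/2$ on the physical side in the Williamson coordinates of $q_1$. Replacing one eigenvalue $m_j$ of $M$ by $m_j+2\pi$ leaves $\ee^{-\ii M}$, and hence $\Bff{K}$, unchanged, but modifies $q$ to $q+2\pi R_j$; on the quantum side
\[
    \ee^{-\ii(q+2\pi R_j)^w}=\ee^{-\ii Q}\,\ee^{-2\pi\ii R_j^w}=-\ee^{-\ii Q},
\]
because $R_j^w$ has spectrum $\frac{1}{2}+\Bbb{N}$ on its coordinate, giving $\ee^{-2\pi\ii R_j^w}=-I$. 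This sign-flip exchanges (2) and (3).

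For $(2)\Rightarrow(1)$, commutativity $[M,M_1]=0$ combined with the diagonalizability of $\ee^{-\ii Q}$ in an orthonormal basis (from self-adjointness in (2)) lets us place $M$ diagonal in the orthonormal eigenbasis of $M_1$, after if necessary a unitary rotation within each degenerate eigenspace of $M_1$ that pulls back to a real symplectic transformation preserving the Williamson form of $q_1$. Writing the eigenvalues of $M$ as $\ii\beta_j+2\pi k_j$ with $\beta_j\in\Bbb{R}$ and $k_j\in\Bbb{Z}$, the action \eqref{eq_FBI-side_evolution} of $\ee^{-\ii\tilde{Q}}$ on the monomial basis $\{x^\alpha\}$ of $H_\Phi$ has eigenvalues
\[
    \mu_\alpha=(-1)^{\sum_j k_j}\prod_j\ee^{\beta_j(\alpha_j+\frac{1}{2})},
\]
and positivity $\mu_\alpha>0$ forces $\sum_j k_j\in 2\Bbb{Z}$. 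Subtracting $2\pi k_j$ from each $m_j$ produces $M'=\ii\,\mathrm{diag}(\beta)$, anti-Hermitian, with $\ee^{-\ii M'}=\ee^{-\ii M}$; pulling back via $\mathfrak{T}$ yields a quadratic form $q'$ with $-\ii q'$ real on $\Bbb{R}^{2n}$ and $\exp H_{q'}=\Bff{K}$, proving (1). The main obstacle is the diagonalizability/semisimplicity argument for $M$ within the eigenspaces of $M_1$, which I handle by combining the self-adjoint structure of $\ee^{-\ii Q}$ with the commutativity $[M,M_1]=0$ to rule out nontrivial Jordan blocks.
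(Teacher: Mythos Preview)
Your approach via the FBI--Bargmann reduction is genuinely different from the paper's, which works entirely on the real-symplectic side: the paper applies Williamson's theorem to the real positive definite form $q_1$, solves $\tanh(H_q/2)=\frac{1}{2\ii}H_{q_1}$ coordinate-by-coordinate via $\operatorname{arctanh}$, and then cites \cite[Prop.~2.5]{Viola_2016} for the equivalence between reality of the resulting $-\ii q$ and positivity of $\ee^{-\ii Q}$. Your route through the Bargmann space is more self-contained in principle, but as written it has gaps.

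The main gap is in $(2)\Rightarrow(1)$: you assert that self-adjointness of $\ee^{-\ii Q}$, together with $[M,M_1]=0$, lets you ``place $M$ diagonal'' in an orthonormal basis, but diagonalizability of the \emph{operator} $\ee^{-\ii Q}$ does not imply diagonalizability of the \emph{matrix} $M$ (a non-diagonalizable $M$ can yield a diagonalizable $\ee^{-\ii M}$ when eigenvalues collide modulo $2\pi$). What self-adjointness of $\ee^{-\ii\tilde{Q}}$ on the Bargmann space \emph{does} give you---via the reproducing-kernel identity $(T_A)^*=T_{A^*}$ for $T_A u(x)=u(Ax)$---is that $A:=\ee^{-\ii M}$ is Hermitian and $c:=\ee^{-\frac{\ii}{2}\tr M}$ is real. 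Positivity then forces $c>0$ (test on constants) and $A$ positive definite (test on linear functions $u_v(x)=v\cdot x$). At that point you can bypass diagonalizing $M$ entirely: set $M'=\ii\log A$ using the unique Hermitian logarithm of the positive definite $A$; then $M'$ is anti-Hermitian, $\ee^{-\ii M'}=A=\ee^{-\ii M}$ so the flow is unchanged, and your own pullback computation shows that $-\ii q'$ is real on $\Bbb{R}^{2n}$ precisely when $M'$ is anti-Hermitian. The commutativity with $M_1$ and the unitary rotation become unnecessary.

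A second, smaller issue: your $(2)\Leftrightarrow(3)$ argument shifts ``one eigenvalue $m_j$ of $M$'' by $2\pi$, but this operation is only well-defined once $M$ is diagonal in a basis where the single-mode symbol pulls back to $R_j$. Since such a form is only available after $(1)$ is established, the cleaner order is $(1)\Rightarrow(2)$, $(2)\Rightarrow(1)$, and then $(2)\Leftrightarrow(3)$ via the explicit diagonal model. Finally, a minor slip: $M_1$ is real diagonal (equal to $\operatorname{diag}(\mu_j)$, since $R_j$ pulls forward to $\ii x_j\xi_j$), not pure imaginary.
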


\begin{proof}
When $\Bff{K}$ is strictly positive, $\overline{\Bff{K}}^{-1} = \Bff{K}$ if and only if $q_1$ from \eqref{eq_q1_Mehler_canonical} is real on $\Bbb{R}^{2n}$: if $\Bff{z} \in \Bbb{R}^{2n}$, then
\[
	\begin{aligned}
	\overline{q_1(\Bff{z})} &= \sigma(\Bff{z}, \ii(1+\overline{\Bff{K}})^{-1}(1-\overline{\Bff{K}}^{-1})\Bff{z})
	\\ &= \sigma(\Bff{z}, -\ii(1+\overline{\Bff{K}}^{-1})^{-1}(1-\overline{\Bff{K}}^{-1})\Bff{z}).
	\end{aligned}
\]
This gives the Hamilton map $H_{\overline{q_1}}$, which is equal to $H_{q_1}$ if and only if $\Bff{K} =  \overline{\Bff{K}}^{-1}$. Furthermore, since $\Bff{K}$ is strictly positive, $q_1(\Bff{z})$ is positive definite.

It is therefore classical that there exists a linear canonical transformation $\Bff{U}:\Bbb{R}^{2n}\to\Bbb{R}^{2n}$ and $\{\mu_j\}_{j=1}^n$ positive real numbers such that, when 
\[
	\Bff{z} = (\Bff{z}_1, \dots,\Bff{z}_n) = (x_1, \xi_1, \dots, x_n, \xi_n),
\]
\[
	(q_1\circ \Bff{U}^{-1})(\Bff{z}) = \sum_{j=1}^n \frac{\mu_j}{2} \Bff{z}_j^2.
\]
Comparing with the harmonic oscillator model $q_0(\Bff{z}) = \frac{1}{2}\Bff{z}^2$, for which $\opnm{Spec} H_{q_0} = \{\pm \ii\}$, we see that $\opnm{Spec}H_{q_1 \circ \Bff{U}^{-1}} = \{\pm \ii \mu_j\}_{j=1}^n$. Because, by \eqref{eq_Hq1_ssym},
\[
	1 \notin \Spec (1+\Bff{K})^{-1}(1-\Bff{K}) = \opnm{Spec}\left(\frac{\ii}{2}H_{q_1}\right)
\]
we have that no $\mu_j$ is equal to 2. Solving \eqref{eq_Hq1_ssym} for $\Bff{K} = \exp H_q$ gives that
\[
	\tanh(H_q/2) = \frac{1}{2\ii} H_{q_1}.
\]
We simplify the process of taking the inverse hyperbolic tangent of $H_{q_1}$ by again using the harmonic oscillator model, for which $\tanh(\tau H_{q_0}) = (\tan \tau) H_{q_0}.$ We pose
\[
	(-\ii q \circ \Bff{U}^{-1})(\Bff{z}) = \sum_{j=1}^n \frac{\tau_j}{2} \Bff{z}_j^2,
\]
which gives the requirement $\tan \tau_j = \frac{1}{2\ii} \mu_j$. We therefore can define $q$ via
\begin{equation}\label{eq_real_log_reduced}
	(-\ii q\circ \Bff{U}^{-1})(\Bff{z}) = \sum_{j=1}^n \opnm{arctanh} (\mu_j/2) \Bff{z}_j^2,
\end{equation}
which may be chosen real if and only if $\mu_j \in (0, 1)$ for all $j$. By \cite[Prop.~2.5]{Viola_2016}, this is equivalent to positivity of $\ee^{-\ii (q\circ \Bff{U}^{-1})^w}$ on $L^2(\Bbb{R}^n)$, and this may be pulled back to positivity of $\ee^{-\ii q^w}$ via a metaplectic transformation. Passing between the conditions on positivity and negativity can be done by replacing $\opnm{arctanh}(\mu_1/2)$ with $\opnm{arctanh}(\mu_1/2) + 2\pi \ii$. This leaves $\exp H_q$ unchanged while reversing the sign of $\ee^{-\ii q^w}$, since in the reduced coordinates \eqref{eq_real_log_reduced} it is equivalent to multiplying by 
\[
	\exp(-\pi \ii(x_1^2 + D_{x_1}^2)) = -1,
\]
as discussed in Remark \ref{rem_ambiguity}.
\end{proof}

\subsection{Egorov relations, Mehler formulas, and the classical-quantum correspondence}\label{ssec_semigroup}

Having set up the equivalent problem on the FBI--Bargmann side, we can readily deduce the Egorov relation and the Mehler formula for $\ee^{-\ii Q}$ via the Egorov relation for the change of variables and the Fourier inversion formula.

\begin{proposition}\label{prop_Mehler_via_FBI}
Let $q$ be any quadratic form for which the canonical transformation $\Bff{K} = \exp H_q$ is strictly positive, let $Q = q^w$, and let $\ee^{-\ii Q}$ be defined as in \eqref{eq_FBI_evolution}. Then the operator $\ee^{-\ii Q}$ is associated with an Egorov relation for polynomial symbols: if $a(x,\xi)$ is a polynomial on $\Bbb{C}^{2n}$, then
\begin{equation}\label{eq_semigroup_Egorov}
	\ee^{-\ii Q}a^w = (a\circ\Bff{K}^{-1})^w(x,D_x)\ee^{-\ii Q}.
\end{equation}
Furthermore, there is a choice of sign such that the Mehler formula gives the Weyl symbol of $\ee^{-\ii Q}$:
\begin{equation}\label{eq_Mehler}
	\ee^{-\ii Q} = \Op^w\left(\frac{\pm 1}{\sqrt{\det\cosh(H_q/2)}}\exp\left(\sigma((x,\xi), \frac{1}{\ii}\tanh(H_q/2)(x,\xi))\right)\right).
\end{equation}
\end{proposition}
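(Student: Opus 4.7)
The plan is to reduce both assertions to the FBI--Bargmann side via the conjugation $\ee^{-\ii Q} = \mathfrak{T}^* \ee^{-\ii \tilde{Q}} \mathfrak{T}$, where $\tilde{Q} = Mx\cdot D_x + \tfrac{1}{2}\tr M$ acts by the weighted pullback \eqref{eq_FBI-side_evolution}, and to exploit the intertwining \eqref{eq_Egorov_FBI} of Weyl quantization together with the similarity $\Bff{K} = \Bff{T}^{-1}\tilde{\Bff{K}}\Bff{T}$ from \eqref{eq_K_FBI}.

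For the Egorov relation \eqref{eq_semigroup_Egorov}, the key observation is that $\ee^{-\ii \tilde{Q}}$ is a scalar multiple of a linear pullback, so conjugation of the generators $x_j$ and $D_{x_j}$ by $\ee^{-\ii \tilde{Q}}$ can be computed by elementary calculus and yields precisely the action of $\tilde{\Bff{K}}^{-1}$ on the phase-space coordinates described in \eqref{eq_K_FBI}. The exact Egorov identity $\ee^{-\ii \tilde{Q}}\tilde{a}^w = (\tilde{a}\circ\tilde{\Bff{K}}^{-1})^w\ee^{-\ii \tilde{Q}}$ then extends to every polynomial symbol $\tilde{a}$, since both sides depend linearly and algebraically on $\tilde{a}$ and the conjugation is a homomorphism on polynomial Weyl symbols. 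Applying $\mathfrak{T}^*(\,\cdot\,)\mathfrak{T}$ to this identity, converting symbols via \eqref{eq_Egorov_FBI} applied to $a^w = \mathfrak{T}^*(a\circ \Bff{T}^{-1})^w \mathfrak{T}$, and substituting $\Bff{T}^{-1}\tilde{\Bff{K}}^{-1}\Bff{T} = \Bff{K}^{-1}$ gives \eqref{eq_semigroup_Egorov}.

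For the Mehler formula \eqref{eq_Mehler}, Proposition \ref{prop_Mehler_integrability} guarantees that the proposed Weyl symbol is a Gaussian integrable on $\Bbb{R}^{2n}$, so its Weyl quantization defines a Hilbert--Schmidt operator. I would identify this operator with $\ee^{-\ii Q}$ by computing the Schwartz kernel of $\mathfrak{T}^* \ee^{-\ii \tilde{Q}} \mathfrak{T}$: plugging in the Gaussian FBI--Bargmann phase $\varphi$ and the scaling $u \mapsto u(\ee^{-\ii M}\,\cdot\,)$ produces a Gaussian integral in the intermediate variable whose evaluation yields a Gaussian kernel $K(x,y)$, and recovering the Weyl symbol is then a Fourier transform in $x-y$, producing a further Gaussian. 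After using the similarity $H_q = \Bff{T}^{-1}H_{\tilde{q}}\Bff{T}$ and standard linear-symplectic identities relating $\Bff{K}$ and $\tanh(H_q/2)$, the resulting exponent reorganizes into $\sigma(\Bff{z}, \tfrac{1}{\ii}\tanh(H_q/2)\Bff{z})$, and the product of the two Gaussian determinants collapses to $(\det\cosh(H_q/2))^{-1/2}$.

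The main obstacle I expect is tracking the prefactor and the sign through the two successive Gaussian integrations. A convenient shortcut is to first verify the formula in the block-diagonal model $\tilde{q}(x,\xi) = Mx\cdot \ii \xi$, where $\cosh(H_{\tilde{q}}/2)$ and $\tanh(H_{\tilde{q}}/2)$ reduce to explicit expressions in $M$ and the kernel and symbol computation is essentially a Gaussian computation in one complex variable at a time; the identity then transfers to general $q$ via the canonical similarity by $\Bff{T}$, using that $\det \cosh$ is a similarity invariant and that $\sigma$ is preserved by any canonical transformation. The residual $\pm$ in \eqref{eq_Mehler} is intrinsic, since a holomorphic square root of a determinant is defined only up to sign.
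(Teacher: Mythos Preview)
Your approach to the Egorov relation is exactly the paper's: reduce to the FBI--Bargmann side where $\ee^{-\ii\tilde Q}$ is a weighted linear pullback, verify the conjugation on the generators $x_j, D_{x_j}$, extend to polynomials, and pull back via \eqref{eq_Egorov_FBI} and \eqref{eq_K_FBI}.

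For the Mehler formula your overall strategy---prove it first in the block-diagonal model $\tilde q(x,\xi)=Mx\cdot\ii\xi$ and then transfer by the canonical similarity $\Bff{T}$---is also the paper's. The execution differs in one respect worth noting. You propose to compute the Schwartz kernel of $\ee^{-\ii\tilde Q}$ (or of $\mathfrak{T}^*\ee^{-\ii\tilde Q}\mathfrak{T}$) and then recover the Weyl symbol by a Fourier transform in $x-y$, which costs two Gaussian integrations and the attendant determinant bookkeeping. The paper bypasses the kernel entirely: it writes $\ee^{-\ii\tilde Q}u(x)$ via the Fourier inversion formula as an oscillatory integral with phase $\ii(\ee^{-\ii M}x-y)\cdot\xi$, separately computes $\Op^w(\ee^{2\ii Ax\cdot\xi})u(x)$ as an oscillatory integral, and equates the two phases after a linear change in $\xi$. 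This forces $A=(1+\ee^{\ii M})^{-1}(1-\ee^{\ii M})$, which is recognized as $-\tanh(H_{\tilde q}/2)$ acting in the $x$-block, and the prefactor $\ee^{-\frac{\ii}{2}\tr M}\det(1-A)=(\det\cosh(\ii M/2))^{-1}$ drops out with no Gaussian integrals at all. Your route is perfectly valid but longer; the paper's comparison of oscillatory-integral phases is what makes the prefactor and sign tracking painless.
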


\begin{remark}
We prove the Egorov theorem for polynomial symbols because $a\circ \Bff{K}^{-1}$ is generally not defined for $a \in \mathscr{S}'(\Bbb{R}^{2n})$, or even $a \in\mathscr{S}(\Bbb{R}^{2n})$, since $\Bff{K}$ is a complex linear transformation. We recall also that the hypothesis that $\Bff{K}$ is strictly positive is sufficient to apply the reduction in Section \ref{ssec_Fock} by Proposition \ref{prop_positivity_implies_supersymmetry}.
\end{remark}

\begin{proof}
We perform the analysis on the FBI--Bargmann side; see for instance \cite[Section 13.4]{Zworski_2012} for details on the Weyl quantization there. It is classical that, when $G\in GL(n, \Bbb{C})$, the operator
\[
	\mathcal{V}_G = (\det G)^{1/2}u(Gx), \quad u \in \opnm{Hol}(\Bbb{C}^n)
\]
is associated via an Egorov relation with the canonical tranformation $\Bff{V}_G(x,\xi) = (G^{-1}x, G^\top \xi)$. The relation \eqref{eq_semigroup_Egorov} follows by passing to the FBI--Bargmann side, where from \eqref{eq_FBI-side_evolution} we know that $\ee^{-\ii \tilde{Q}} = c\mathcal{V}_{\ee^{-\ii M}}$, which is related to the canonical transformation $\Bff{V}_{\ee^{-\ii M}} = \exp H_{\tilde{q}}$.

As for the Mehler formula, we begin by writing the solution \eqref{eq_FBI_evolution} via the Fourier inversion formula,
\begin{equation}\label{eq_FBI_Fourier_Inversion}
	\ee^{-\ii \tilde{Q}}u(x) = (2\pi)^{-n}\ee^{-\frac{\ii}{2}\tr M}\int \ee^{\ii (\ee^{-\ii M}x - y)\cdot \xi}u(y)\,\dd y \,\dd \xi.
\end{equation}
If $A$ is a matrix for which $1 \notin \opnm{Spec}A$,
\begin{equation}\label{eq_FBI_Weylq_Mehler}
	\begin{aligned}
	\Op^w(\ee^{2 \ii A x\cdot \xi})u(x) &= (2\pi)^{-n}\int \ee^{\ii (x-y)\cdot\xi + \ii A(x+y)\cdot \xi}u(y)\,\dd y\,\dd \xi
	\\ &= (2\pi)^{-n} \int \ee^{\ii((1-A)^{-1}(1+A)x - y)\cdot (1-A^\top)\xi}u(y)\,\dd y\,\dd \xi
	\\ &= \frac{(2\pi)^{-n}}{\det (1-A)}\int \ee^{\ii((1-A)^{-1}(1+A)x - y)\cdot \xi}u(y)\,\dd y\,\dd \xi.
	\end{aligned}
\end{equation}

We pose
\[
	(1-A)^{-1}(1+A) = \ee^{-\ii M}.
\]
Since we have assumed that $\Bff{K}$ is strictly positive, $1 \notin \opnm{Spec}\Bff{K}$. By the similarity relation \eqref{eq_similarity_FBI}, $1 \notin \ee^{-\ii M}$, so we may solve for $A$ to obtain
\begin{equation}\label{eq_FBI_Mehler_matrix}
	A = (1+\ee^{\ii M})^{-1}(1-\ee^{\ii M}).
\end{equation}
The computation
\begin{equation}\label{eq_FBI_cosh_comp}
	1-A = 2\ee^{\ii M}(1+\ee^{\ii M})^{-1}
\end{equation}
shows that $1 \notin \opnm{Spec}A$, as we had supposed. Using \eqref{eq_similarity_FBI} again, we deduce that, if $\tilde{\Bff{K}} = \Bff{TKT}^{-1} = \exp H_{\tilde{q}}$, then
\[
	2\ii Ax\cdot \xi = \ii \sigma((x,\xi), (1+\tilde{\Bff{K}})^{-1}(1-\tilde{\Bff{K}})(x,\xi)).
\]
Note that $\tanh(H_{\tilde{q}}/2) = -(1+\tilde{\Bff{K}})^{-1}(1-\tilde{\Bff{K}})$, so all that remains in proving \eqref{eq_Mehler} on the FBI--Bargmann side is to compute the coefficient.

We have shown that, with $A$ defined in \eqref{eq_FBI_cosh_comp},
\[
	\ee^{-\ii \tilde{Q}} = \ee^{-\frac{\ii}{2}\tr M}\det(1-A)\Op^w(\ee^{2\ii Ax\cdot\xi}).
\]
We therefore compute
\[
	\begin{aligned}
	\ee^{-\frac{\ii}{2}\tr M}\det(1-A) &= 2^{2n}\det\left(\ee^{\frac{\ii}{2}M}(1+\ee^{\ii M})^{-1}\right)
	\\ &= \left(\det \cosh(\ii M/2)\right)^{-1}.
	\end{aligned}
\]
We note also that 
\[
	(\det\cosh(\ii M/2))^2 = \det \cosh(H_{\tilde{q}}/2).
\]
This finishes the proof of \eqref{eq_Mehler} on the FBI--Bargmann side, for $\tilde{q}$ and $\tilde{K}$. The general Mehler formula \eqref{eq_Mehler} follows from the Egorov relation \eqref{eq_Egorov_FBI} for the FBI--Bargmann transform and the similarity relation \eqref{eq_similarity_FBI}.
\end{proof}

\begin{remark}\label{rem_ambiguity}
We emphasize that there is no ambiguity in either $\ee^{-\ii Q}$ or in \eqref{eq_Mehler}, despite making a choice of a square root. Indeed, if we have computed the matrix $M$ associated with $q$ on the FBI--Bargmann side, the correct choice is dictated by $\det\cosh(\ii M/2)$.

The canonical example of this choice (and of the Maslov index) appears with the usual harmonic oscillator $Q_0$ in dimension one, for which
\[
	\ee^{-2\pi \ii Q_0} = -1.
\]
The classical Bargmann transform in Example \ref{ex_Bargmann} reduces $Q_0$ to $x\cdot \partial_x + \frac{1}{2}$, meaning in this case $M = 1$. It is then obvious that $\det \cosh(\pi H_{q_0}) = 1$ while $\cosh(\ii \pi) = -1$.
\end{remark}

We now verify \eqref{eq_Weyl_semigroup} for quadratic forms for which $\exp H_{q_j}$ are positive definite. Fortunately, with the Mehler formula \eqref{eq_Mehler} in hand, verifying this statement is straightforward.

\begin{proposition}\label{prop_classical_quantum}
Let $q_j:\Bbb{R}^{2n}\to\Bbb{C}$ for $j=1,2,3$ be three quadratic forms such that the Hamilton flows $\exp H_{q_j}$ are strictly positive, and let $Q_j = q_j^w$. Then
\[
	\exp H_{q_1}\exp H_{q_2} = \exp H_{q_3} \iff \exists \omega \in \{\pm 1\} \::\: \ee^{-\ii Q_1}\ee^{-\ii Q_2} = \omega \ee^{-\ii Q_3},
\]
\end{proposition}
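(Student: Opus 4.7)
The plan is to prove the two implications separately, relying on the Mehler formula \eqref{eq_Mehler} and the Egorov relation \eqref{eq_semigroup_Egorov} from Proposition~\ref{prop_Mehler_via_FBI}.

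For the reverse implication, suppose $\ee^{-\ii Q_1}\ee^{-\ii Q_2} = \omega\,\ee^{-\ii Q_3}$. Iterating \eqref{eq_semigroup_Egorov}, for any polynomial symbol $a$,
\[
\ee^{-\ii Q_1}\ee^{-\ii Q_2}\,a^w \;=\; \ee^{-\ii Q_1}\,(a\circ \Bff{K}_2^{-1})^w\,\ee^{-\ii Q_2} \;=\; \bigl(a\circ(\Bff{K}_1\Bff{K}_2)^{-1}\bigr)^w\,\ee^{-\ii Q_1}\ee^{-\ii Q_2},
\]
while by the same relation $\omega\,\ee^{-\ii Q_3}\,a^w = \omega\,(a\circ \Bff{K}_3^{-1})^w\,\ee^{-\ii Q_3}$. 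Equating the two expressions and using that $\ee^{-\ii Q_3}$ has dense range (its FBI--Bargmann-side realization \eqref{eq_FBI-side_evolution} is $u(x)\mapsto c\,u(\ee^{-\ii M}x)$, whose image contains all polynomials), we conclude $a\circ(\Bff{K}_1\Bff{K}_2)^{-1} = a\circ \Bff{K}_3^{-1}$ for every polynomial $a$; taking $a$ to run through the coordinate functions yields $\Bff{K}_3 = \Bff{K}_1\Bff{K}_2$.

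For the forward implication, assume $\Bff{K}_3 = \Bff{K}_1\Bff{K}_2$. By Mehler \eqref{eq_Mehler}, each $\ee^{-\ii Q_j}$ has Weyl symbol $\alpha_j \exp\bigl(\sigma(\Bff{z},\tfrac{1}{\ii}T_j\Bff{z})\bigr)$ with $T_j=\tanh(H_{q_j}/2)$ and $\alpha_j=\pm(\det\cosh(H_{q_j}/2))^{-1/2}$; by Proposition~\ref{prop_Mehler_integrability} these are integrable Gaussians on $\Bbb{R}^{2n}$. Computing the Moyal (twisted) product of two such symbols by an explicit Gaussian integral yields another Gaussian, whose quadratic exponent corresponds, via the Cayley-type relation $\Bff{K}=(1-T)^{-1}(1+T)$, to the canonical transformation $\Bff{K}_1\Bff{K}_2 = \Bff{K}_3$. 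Hence the composition has the same quadratic exponent as the Mehler symbol of $\ee^{-\ii Q_3}$, so $\ee^{-\ii Q_1}\ee^{-\ii Q_2} = c\,\ee^{-\ii Q_3}$ for some constant $c\in\Bbb{C}$.

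To identify $c\in\{\pm 1\}$, one can either finish the Gaussian integral and match the determinant prefactor with $\alpha_3$ using a matrix identity relating $\det(1-T_1T_2)$ to the three factors $\det\cosh(H_{q_j}/2)$, or argue by analytic continuation from the subclass $\{\Im q_j \le 0\}$, where the conclusion $c\in\{\pm 1\}$ is the classical metaplectic composition rule (see \cite[Prop.~5.9]{Hormander_1995}) and the scalar $c$ is an analytic function of $(q_1,q_2)$ on the connected cone of pairs with strictly positive Hamilton flows. The main obstacle is the coherent bookkeeping of sign branches in the square roots entering the Mehler coefficients $\alpha_j$: this Maslov-type ambiguity is precisely why the conclusion is inherently only up to the sign $\omega$.
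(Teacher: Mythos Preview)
Your forward-implication argument is the paper's approach: both use the Mehler formula and compute the Moyal product of the two Gaussian symbols. The paper carries out the computation you only sketch, giving the closed form $\tilde T_3 = 1-(1-T_2)(1+T_1T_2)^{-1}(1-T_1)$, reducing it under the Cayley substitution $T_j=(\Bff{K}_j+1)^{-1}(\Bff{K}_j-1)$ to $(\Bff{K}_1\Bff{K}_2+1)^{-1}(\Bff{K}_1\Bff{K}_2-1)$, and then matching the determinant prefactor to $\alpha_3$ directly rather than by analytic continuation; your separate Egorov argument for the reverse implication is correct, and the paper itself remarks that this direction ``could have been deduced from the Egorov relations.''
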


\begin{proof}
From \cite[Thm.~(5.6), Prop.~(5.12)]{Folland_1989} or \cite[Prop.~5.1]{Viola_2016}, if $T_1, T_2$ are matrices antisymmetric with respect to $\sigma$ and if $\ii \sigma(\Bff{z}, T_j\Bff{z})$ has positive definite real part on $\Bff{z}\in\Bbb{R}^{2n}$, we have the formula
\[
	\Op^w(\ee^{-\ii \sigma(\Bff{z}, T_1\Bff{z})})\Op^w(\ee^{-\ii\sigma(\Bff{z}, T_2\Bff{z})}) = (\det(1 + T_1 T_2))^{-1/2}\Op^w(\ee^{-\ii \sigma(\Bff{z}, \tilde{T}_3\Bff{z})}),
\]
where
\[
	\tilde{T}_3 = 1-(1-T_2)(1+T_1T_2)^{-1}(1-T_1).
\]
To obtain this formula, one computes the sharp product via the Fourier transform of a Gaussian and one uses identities like $\ii \sigma(\Bff{z}, T_j \Bff{z}) = \Bff{z}\cdot \ii \Bff{J}T \Bff{z}$ and $T_j^\top = \Bff{J}T_j\Bff{J}$ for $\Bff{J}(x,\xi) = (-\xi, x)$; we refer the reader to the references for this computation.

Supposing that $T_j = (\Bff{K}_j+1)^{-1}(\Bff{K}_j-1)$ for $j = 1,2$, as is the case when $T_j = \tanh(H_{q_j}/2)$ and $\Bff{K}_j = \exp H_{q_j}$, simplifies this formula even further, particularly because
\[
	1+T_1T_2 = 2(1+\Bff{K}_1)^{-1}(1+\Bff{K}_1\Bff{K}_2)(1+\Bff{K}_2)^{-1}.
\]
We also see that $1 + T_j = 2(1+\Bff{K}_j)^{-1}$, so
\[
	\begin{aligned}
	\tilde{T}_3 &= 1- 2(1+\Bff{K}_2)^{-1}\frac{1}{2}(1+\Bff{K}_2)(1+\Bff{K}_1\Bff{K}_2)^{-1}(1 + \Bff{K}_1)2(1+\Bff{K}_1)^{-1}
	\\ &= 1 - 2(1+ \Bff{K}_1\Bff{K}_2)^{-1}
	\\ &= (\Bff{K}_1\Bff{K}_2+1)^{-1}(\Bff{K}_1 \Bff{K}_2-1).
	\end{aligned}
\]

From \eqref{eq_Mehler}, the fact that $\cosh(H_{q_j}/2) = \frac{1}{2}\ee^{-H_{q_j}/2}(1+\Bff{K}_j)$, and the computations above,
\[
	\begin{aligned}
	\ee^{-\ii Q_1}\ee^{-\ii Q_2} &= \left(\det\left(\cosh(H_{q_1}/2)\cosh(H_{q_2}/2)(1+T_1T_2)\right)\right)^{-1/2}\Op^w(\ee^{\ii \sigma(\Bff{z}, \tilde{T}_3\Bff{z})})
	\\ &= \left(2^{-2n}\det\left(\ee^{H_{q_1}/2}\ee^{H_{q_2}/2}(1+\Bff{K}_1\Bff{K}_2)\right)\right)^{-1/2}\Op^w(\ee^{\ii \sigma(\Bff{z}, \tilde{T}_3\Bff{z})}).
	\end{aligned}
\]
Writing the Mehler formula for $\ee^{-\ii Q_3}$, we see that
\[
	\ee^{-\ii Q_1}\ee^{-\ii Q_2} = \pm \ee^{-\ii -Q_3},
\]
where we have not specified the signs of any of the square roots, if and only $T_3 = \tilde{T}_3$ and if
\[
	\det\left(\ee^{H_{q_1}/2}\ee^{H_{q_2}/2}(1+\Bff{K}_1\Bff{K}_2)\right) = \pm \det\left(\ee^{H_{q_3}/2}(1+\Bff{K}_3)\right).
\]
The former condition holds if and only if $\Bff{K}_1 \Bff{K}_2 = \Bff{K}_3$, which could have been deduced from the Egorov relations. The latter condition is a consequence of the former, because $\det (\Bff{K}_1^{1/2}\Bff{K}_2^{1/2}) = \pm \det ((\Bff{K}_1\Bff{K}_2)^{1/2}).$
\end{proof}

\subsection{Associating a Schr\"odinger evolution to a Gaussian kernel}\label{ssec_proof_gaussian_kernels}

At this point, we can show that Schr\"odinger evolutions of perturbed supersymmetric quadratic forms describe, up to constants, all nondegenerate Gaussian kernels so long as $\det \varphi''_{xy} \neq 0$.

\begin{proof}[Proof of Theorem \ref{thm_gaussian_kernels}]
From \eqref{eq_K_phi_kernel}, one may see that
\[
	\mathfrak{T}_\varphi u(x) = \int \ee^{\ii \varphi(x,y)} u(y)\,\dd y
\]
is associated with the affine canonical transformation 
\[
	\Bff{L}_\varphi \Bff{z} = \Bff{K}_\varphi \Bff{z} + \Bff{w},
\]
where $\Bff{K}_\varphi$ is defined in \eqref{eq_def_K_phi} and
\begin{equation}\label{eq_Gaussian_kernel_shift}
	\Bff{w} = (-(\varphi''_{yx})^{-1}\varphi'_x(0,0), -\varphi''_{xx}(\varphi''_{yx})^{-1} \varphi'_y(0,0) + \varphi'_x(0,0)).
\end{equation}

By Proposition \ref{prop_Gaussian_nondeg_iff_positive}, $\Bff{K}_\varphi$ is strictly positive; by Proposition \ref{prop_supersymmetric_onto}, let $q$ be a quadratic form for which $\exp H_q = \Bff{K}_\varphi$. Since $\Bff{K}_\varphi$ is strictly positive, $1 \notin \opnm{Spec} \Bff{K}_\varphi$, so we may define
\[
	\Bff{v} = (1-\Bff{K}_\varphi)^{-1} \Bff{w}.
\]
The operators $\mathfrak{T}_\varphi$ and $\ee^{-\ii P}$, with $P = \Op^w(q((x,\xi) - \Bff{v}))$, are chosen to correspond to the same canonical transformation \eqref{eq_Egorov_exp-iP}.

By Lemma \ref{lem_shift_Egorov} and Proposition \ref{prop_Mehler_via_FBI}, we can write
\[
	\ee^{-\ii P}u(x) = \int \ee^{\ii \Psi(x,y,\xi)}u(y)\,\dd y\,\dd \xi
\]
for $\Psi$ a degree-2 polynomial. By Proposition \ref{prop_Mehler_integrability}, $\Im \Psi''$ is positive definite. We may therefore integrate out the $\xi$-variables to obtain, for some degree-2 polynomial $\psi(x,y)$,
\[
	\ee^{-\ii P}u(x) = \int \ee^{\ii \psi(x,y)}u(y)\,\dd y = \mathfrak{T}_\psi.
\]
From \eqref{eq_def_K_phi} and \eqref{eq_Gaussian_kernel_shift},  the canonical transformation associated with $\mathfrak{T}_\varphi$ determines the derivative of $\varphi$ and therefore identifies $\varphi$ up to constants. Since $\mathfrak{T}_\varphi$ and $\mathfrak{T}_\psi$ correspond to the same canonical transformation, there exists some $c_0 \in \Bbb{C}$ such that
\[
	\varphi = \psi + c_0.
\]
Setting $c = \ee^{\ii c_0}$ gives
\[
	\mathfrak{T}_\varphi = c\mathfrak{T}_\psi = c\ee^{-\ii P},
\]
which is the statement of the theorem.
\end{proof}

\section{Applications}\label{sec_app}

As an application of the results in this work, we focus principally on the rotated harmonic oscillator. This gives a complete accounting of the possible models in dimension one \cite[Lem.~2.1]{Pravda-Starov_2007} and allows us to visualize the dynamics on phase space associated with subelliptic phenomena and return to equilibrium. As a final example, we show how the classical Bargmann transform can be formally obtained as a non-elliptic Schr\"odinger evolution of a purely imaginary Hamiltonian.

\subsection{The non-self-adjoint harmonic oscillator}

For $\theta \in (-\pi/2, \pi/2)$, let
\begin{equation}\label{eq_def_q_theta}
	q_\theta(x,\xi) = \frac{1}{2}(\ee^{-\ii \theta} \xi^2 + \ee^{\ii\theta} x^2)
\end{equation}
and let
\begin{equation}\label{eq_def_Q_theta}
	Q_\theta = q_\theta^w(x,D_x) = \frac{1}{2}(\ee^{-\ii\theta}D_x^2 + \ee^{\ii\theta}x^2).
\end{equation}
Theorem \ref{thm_norm_quadratic} allows us to find the norm of $\ee^{-\ii tQ_\theta}$ as an operator in $\mathcal{L}(L^2(\Bbb{R}))$, adding to the two expressions found in \cite[Thm.~1.1, 1.2]{Viola_2016}. We remark that all three methods of proof are somewhat different.

\begin{proposition}
Fix $\theta \in (-\pi/2, \pi/2)$ and $t \in \Bbb{C}$. Let $Q_\theta$ be as in \eqref{eq_def_Q_theta}, and define
\begin{equation}\label{eq_RHO_a}
	a = |\cos t|^2 + \cos(2\theta)|\sin t|^2.
\end{equation}
Then $\ee^{-\ii t Q_\theta}$ is compact on $L^2(\Bbb{R})$ if and only if $a > 1$ and $\Im t < 0$, and in this case
\begin{equation}\label{eq_RHO_norm}
	\|\ee^{-\ii t Q_\theta}\| = \left(a - \sqrt{a^2 - 1}\right)^{1/4}.
\end{equation}
\end{proposition}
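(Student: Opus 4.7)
The plan is to apply Theorem \ref{thm_norm_quadratic} to the quadratic form $tq_\theta$, so I need to (i) compute the Hamilton flow $\Bff{K} = \exp(tH_{q_\theta})$, (ii) characterize when $\Bff{K}$ is strictly positive (equivalently, when $\ee^{-\ii tQ_\theta}$ is compact, by Proposition \ref{prop_compact_strictly_pos}), and (iii) read off the unique eigenvalue $\mu\in(0,1)$ of $\overline{\Bff{K}}^{-1}\Bff{K}$.

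For (i) and (iii) the key observation is that
\[
H_{q_\theta} = \begin{pmatrix} 0 & \ee^{-\ii\theta} \\ -\ee^{\ii\theta} & 0\end{pmatrix}
\]
satisfies $H_{q_\theta}^2 = -I$, so $\Bff{K} = (\cos t)I + (\sin t)H_{q_\theta}$. Because $\Bff{K}$ is symplectic, $\det(\overline{\Bff{K}}^{-1}\Bff{K}) = 1$, and a short matrix multiplication gives $\operatorname{tr}(\overline{\Bff{K}}^{-1}\Bff{K}) = 2|\cos t|^2 + 2\cos(2\theta)|\sin t|^2 = 2a$. Thus the two eigenvalues are $a\pm\sqrt{a^2-1}$, and assuming strict positivity has been established in step (ii), Theorem \ref{thm_norm_quadratic} will give $\mu = a-\sqrt{a^2-1}$ and the stated norm $\mu^{1/4}$.

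The main obstacle is step (ii), because the trace calculation produces a quantity that is even in $\Im t$, whereas the true compactness criterion also picks out the sign $\Im t < 0$. I extract this sign from Proposition \ref{prop_Mehler_integrability}, which recasts strict positivity as negative definiteness of $\Re\sigma(\overline{\Bff{z}},\ii(1+\Bff{K})^{-1}(1-\Bff{K})\Bff{z})$ on $\Bbb{C}^2$. Using $H_{q_\theta}^2 = -I$ and the odd Taylor series of $\tanh$, one gets $(1+\Bff{K})^{-1}(1-\Bff{K}) = -\tan(t/2)\,H_{q_\theta}$, together with $\sigma(\overline{\Bff{z}},H_{q_\theta}\Bff{z}) = \ee^{-\ii\theta}|\xi|^2 + \ee^{\ii\theta}|x|^2$. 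Writing $\tan(t/2) = A + \ii B$, the target inequality becomes
\[
(B\cos\theta - A\sin\theta)|\xi|^2 + (B\cos\theta + A\sin\theta)|x|^2 < 0 \quad \text{for all }(x,\xi)\neq 0,
\]
which is equivalent to $B < 0$ and $B^2\cos^2\theta > A^2\sin^2\theta$. Using the identity $\tan(t/2) = (\sin t_1 + \ii\sinh t_2)/(2(\cos^2(t_1/2)+\sinh^2(t_2/2)))$, the first condition becomes exactly $\Im t < 0$, and the second, after clearing positive denominators and using $|\cos t|^2 = \cos^2 t_1 + \sinh^2 t_2$ and $|\sin t|^2 = \sin^2 t_1 + \sinh^2 t_2$, simplifies via $a - 1 = 2\cos^2\theta\sinh^2 t_2 - 2\sin^2\theta \sin^2 t_1$ to precisely $a > 1$. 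With this, both the compactness criterion and the norm formula follow from Theorem \ref{thm_norm_quadratic}.
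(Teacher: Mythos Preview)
Your proof is correct, and the norm computation (steps (i) and (iii)) is essentially identical to the paper's: both compute $\Bff{K}=\cos t\,I+\sin t\,H_{q_\theta}$ from $H_{q_\theta}^2=-I$, use $\det(\overline{\Bff{K}}^{-1}\Bff{K})=1$ and $\tfrac12\operatorname{tr}(\overline{\Bff{K}}^{-1}\Bff{K})=a$ to get eigenvalues $a\pm\sqrt{a^2-1}$, and apply Theorem~\ref{thm_norm_quadratic}.

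For step (ii), the compactness/strict positivity criterion, you take a genuinely different route. The paper works directly with the Hermitian matrix $B=\ii\Bff{J}(\overline{\Bff{K}}^{-1}\Bff{K}-1)$, computing $\det B=2(a-1)$ and $\tr B=-2\cos\theta\sinh(2\Im t)$ by a somewhat lengthy entry-by-entry multiplication, and then reads off positive definiteness from $\det B>0$ and $\tr B>0$. You instead invoke Proposition~\ref{prop_Mehler_integrability} and exploit the functional calculus afforded by $H_{q_\theta}^2=-I$ to obtain the clean closed form $(1+\Bff{K})^{-1}(1-\Bff{K})=-\tan(t/2)\,H_{q_\theta}$; the resulting Hermitian form diagonalizes immediately in $|x|^2$ and $|\xi|^2$, and the conditions $\Im t<0$ and $a>1$ drop out from the real and imaginary parts of $\tan(t/2)$ together with the identity $a-1=2\cos^2\theta\sinh^2 t_2-2\sin^2\theta\sin^2 t_1$. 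Your approach avoids the matrix arithmetic of $\overline{\Bff{K}}^{-1}\Bff{K}$ entirely and makes the geometric origin of the two conditions more transparent; the paper's route is more self-contained in that it stays with the single object $\overline{\Bff{K}}^{-1}\Bff{K}$ already computed for the norm. One minor point worth making explicit: Proposition~\ref{prop_Mehler_integrability} presupposes $-1\notin\operatorname{Spec}\Bff{K}$, but since $\operatorname{Spec}\Bff{K}=\{\ee^{\pm\ii t}\}$ this is automatic once $\Im t\neq 0$.
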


\begin{proof}
Since
\[
	H_{q_\theta} = \left(\begin{array}{cc} 0 & \ee^{-\ii\theta} \\ -\ee^{\ii\theta} & 0\end{array}\right)
\]
and $H_{q_\theta}^2 = -1$, we compute that the canonical transformation associated with $\ee^{-\ii tQ_\theta}$ is 
\[
	\Bff{K}_1 = \exp(t H_{q_\theta}) = \cos t + H_{q_\theta}\sin t.
\]
The canonical transformation $\Bff{K}_2 = \overline{\Bff{K}_1}^{-1}$ corresponds to $(\ee^{-\ii t Q_\theta})^*= \ee^{\ii \overline{t}Q_{-\theta}}$, and is therefore given by
\[
	\Bff{K}_2 = \overline{\cos t} - \overline{H_{q_\theta}\sin t}.
\]
We obtain
\[
	\begin{aligned}
	\Bff{K}_2\Bff{K}_1 &= |\cos t|^2 + |\sin t|^2\left(\begin{array}{cc} \ee^{2\ii\theta} & 0 \\ 0 & \ee^{-2\ii\theta}\end{array}\right) + 2\ii \Im(H_{q_\theta}\overline{\cos t}\sin t)
	\\ &= \left(\begin{array}{cc} |\cos t|^2 + \ee^{2\ii\theta}|\sin t|^2 & 2\ii \Im(\ee^{-\ii\theta} \overline{\cos t} \sin t) \\ -2\ii \Im(\ee^{\ii\theta} \overline{\cos t} \sin t) & |\cos t|^2 + \ee^{-2\ii\theta}|\sin t|^2\end{array}\right).
	\end{aligned}
\]

The fact that $\Bff{K}_2 \Bff{K}_1$ is canonical implies that $\det(\Bff{K}_2\Bff{K}_1) = 1$. Therefore, when 
\[
	\begin{aligned}
	a &= \frac{1}{2}\tr(\Bff{K}_2\Bff{K}_1)
	\\ &= |\cos t|^2 + \cos(2\theta)|\sin t|^2,
	\end{aligned}
\]
the eigenvalues in Theorem \ref{thm_norm_quadratic} are given by
\[
	\opnm{Spec}(\Bff{K}_2 \Bff{K}_1) = \{a \pm \sqrt{a^2 - 1}\}.
\]
This proves \eqref{eq_RHO_norm}; what remains is to check the strict positivity condition.

As usual, let $\Bff{J}(x,\xi) = (-\xi, x)$. Writing
\[
	B = \ii \Bff{J}(\Bff{K}_2\Bff{K}_1 - 1),
\]
we compute the determinant by computing that
\[
	-2\ii \Im(\ee^{\ii\theta} \overline{\cos t} \sin t) 2\ii \Im(\ee^{-\ii\theta} \overline{\cos t} \sin t) = -2\Re (\overline{\cos^2 t}\sin^2 t) + 2\cos(2\theta)|\sin t \cos t|^2
\]
and that
\begin{multline*}
	(|\cos t|^2 + \ee^{2\ii\theta}|\sin t|^2-1)(|\cos t|^2 + \ee^{-2\ii\theta}|\sin t|^2-1) = (a-1)^2 + \sin^2(2\theta) |\sin t|^4
	\\ = |\cos t|^4 + 2\cos(2\theta)|\sin t \cos t|^2 + |\sin t|^4 - 2a + 1.
\end{multline*}
These together give that
\[
	\begin{aligned}
	\det B &= -2\Re (\overline{\cos^2 t}\sin^2 t) - |\cos t|^4 - |\sin t|^4 + 2a - 1
	\\ &= -\Re\left((\overline{\cos^2 t} + \overline{\sin^2 t})(\cos^2 t + \sin^2 t)\right) + 2a - 1
	\\ &= 2(a-1).
	\end{aligned}
\]

On the other hand,
\[
	\begin{aligned}
	\tr B &= -4(\cos \theta) \Im (\overline{\cos t}\sin t)
	\\ &= -4(\cos\theta)\Im\left(\frac{1}{4\ii}(-2\sinh (2\Im t) + 2\ii \sin(2\Re t))\right)
	\\ &= -2\cos\theta \sinh 2\Im t.
	\end{aligned}
\]
Therefore, by Proposition \ref{prop_compact_strictly_pos}, $\ee^{-\ii t Q}$ is compact if and only if $\opnm{Spec} B \subset \{\lambda > 0\}$, which holds if and only if $\Im t < 0$ and $a > 1$. This completes the proof of the proposition.
\end{proof}

\begin{example}\label{ex_Davies}
One of the principal motivations of this work is to obtain precise information on the behavior of linear perturbations of subelliptic quadratic Hamiltonians, meaning those with positive semidefinite real part for which the Schr\"odinger evolution is compact via some averaging phenomenon. The simplest example is the evolution of the Davies operator \cite[Sec.~14.5]{Davies_2007}
\[
	Q = D_x^2 + \ii x^2 = \ee^{\ii \pi/4} Q_{\pi/4},
\]
for which the semigroup $\{\ee^{-sQ}\}_{s > 0}$ is obviously smoothing. Less obviously, solutions $\ee^{-sQ}u(x)$ for $s > 0$ and $u \in L^2(\Bbb{R})$ are also superexponentially decaying, which can be seen essentially because $\exp (\ii s H_q)$ is strictly positive and therefore $\ee^{-sQ}$ compares favorably with the harmonic oscillator $Q_0$. Specifically, there exists some $C > 0$ such that $\{\ee^{\frac{s^3}{C} Q_0}\ee^{-sQ}\}_{0 \leq s \leq 1/C}$ is a uniformly bounded family in $\mathcal{L}(L^2(\Bbb{R}))$; see, for example, \cite[Sec.~1.2.1]{Aleman_Viola_2014b} or \cite[Prop.~4.1]{Hitrik_Pravda-Starov_Viola_2015b}.

This corresponds with a slow decrease for $\|\ee^{-s Q}\|$ for small positive $s$. Note that when
\[
	t_1 = -t_2 = \frac{1}{\sqrt{2}}s,
\]
then
\[
	\ee^{-sQ} = \ee^{-\ii(t_1 + \ii t_2)Q_{\pi/4}}.
\]
In this case, $a$ from \eqref{eq_RHO_a} is
\[
	\begin{aligned}
	a &= |\cosh(t_1 + \ii t_2)|^2
	\\ &= \cosh^2 t_1 \cos^2 t_2 + \sinh^2 t_1 \sin^2 t_2
	\\ &= 1 + \frac{1}{6}s^4 + \BigO(s^8)
	\end{aligned}
\]
and therefore
\[
	\|\ee^{-sQ}\| = 1 - \frac{1}{4\sqrt{3}}s^2 + \BigO(s^4).
\]

\end{example}

\subsection{The shifted non-self-adjoint harmonic oscillator}

Let
\[
	\Bff{K} = \exp((t_1 + \ii t_2)H_{q_\theta}), \quad t_1, t_2 \in \Bbb{R}.
\]
Suppose that $\Bff{K}$ is strictly positive, and therefore $\det \Im \Bff{K} \neq 0$. If
\[
	A_1 = (\Im \Bff{K})^{-1}(\Re \Bff{K} - 1),
\]
then
\[
	\Bff{a}_1 = \Re \Bff{v} + A_1 \Im \Bff{v}.
\]
A moderately involved but elementary computation reveals that
\[
	A_1 = \frac{1}{a_0}\left(\begin{array}{cc} a_1 & a_2 \\ a_3 & a_4\end{array}\right)
\]
when
\[
	\begin{aligned}
	a_0 &:= \det \Im \Bff{K}_1 = \cos^2 \theta \sinh^2 t_2 - \sin^2 \theta \sin^2 t_1
	\\ a_1 &= \sin t_1 \sinh t_2 - \frac{1}{2}(\sin 2\theta)(\sin^2 t_1 + \sinh^2 t_2)
	\\ a_2 &= (\cos \theta \sinh t_2 + \sin\theta \sin t_1)(\cos t_1 - \cosh t_2)
	\\ a_3 &= (\sin \theta \sin t_1 - \cos \theta \sinh t_2)(\cos t_1 - \cosh t_2)
	\\ a_4 &= \sin t_1 \sinh t_2 + \frac{1}{2}(\sin 2\theta)(\sin^2 t_1 + \sinh^2 t_2)
	\end{aligned}
\]
Note that, because 
\[
	(\ee^{-\ii(t_1 + \ii t_2)Q_\theta})^* = \ee^{-\ii(-t_1 + \ii t_2)Q_{-\theta}},
\]
replacing $\Bff{K}$ by $\overline{\Bff{K}}^{-1} = \exp(-(t_1 - \ii t_2)H_{q_{-\theta}})$ gives that $\Bff{a}_2 = \Re \Bff{v} - A_2 \Im \Bff{v}$ with
\[
	A_2(t_1, t_2, \theta) = A_1(-t_1, t_2, -\theta) = \frac{1}{a_0}\left(\begin{array}{cc} -a_1 & a_2 \\ a_3 & -a_4\end{array}\right).
\]

While the dynamics of the phase-space centers $\Bff{a}_1, \Bff{a}_2$ are moderately complicated, in order to apply Theorem \ref{thm_norm_deg2}, we only need
\[
	A = -A_1 - A_2 = 2\left(\begin{array}{cc} 0 & \frac{\cos t_1 - \cosh t_2}{\sin \theta \sin t_1 - \cos \theta \sinh t_2} \\ \frac{\cos t_1 - \cosh t_2}{\sin \theta \sin t_1 + \cos \theta \sinh t_2} & 0\end{array}\right).
\]
Theorem \ref{thm_norm_deg2} then gives the following relatively simple expression of the influence of a complex phase-space shift on the norm of the Schr\"odinger evolution for a rotated harmonic oscillator.

\begin{proposition}\label{prop_shifted_RHO}
Let $q_\theta$ and $Q_\theta$ be as in \eqref{eq_def_q_theta} and \eqref{eq_def_Q_theta}, fix $\Bff{v} = (v_x, v_\xi) \in \Bbb{C}^{2n}$, and let $t = t_1 + \ii t_2$ for $t_1, t_2 \in \Bbb{R}$ be such that $\exp tH_{q_\theta}$ is strictly positive. Let $P = \Op^w(q_\theta((x,\xi) - \Bff{v}))$. Then, writing the growth factor $G = \|\ee^{-\ii tP}\|/\|\ee^{-\ii tQ_\theta}\|$,
\[
	\log G = \frac{\cos t_1 - \cosh t_2}{\cos \theta \sinh t_2 + \sin \theta \sin t_1}(\Im v_x)^2 + \frac{\cos t_1 - \cosh t_2}{\cos \theta \sinh t_2 - \sin\theta \sin t_1}(\Im v_\xi)^2.
\]
\end{proposition}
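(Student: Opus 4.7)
This is essentially a direct application of Theorem \ref{thm_norm_deg2}, because the matrix $A$ has already been assembled in the paragraph immediately preceding the proposition; only the symplectic pairing with $\Im \Bff{v}$ remains to be carried out. First, I would absorb the scalar $t$ into the quadratic form by writing $\ee^{-\ii tP} = \ee^{-\ii P'}$ and $\ee^{-\ii tQ_\theta} = \ee^{-\ii Q'}$, where $P' = \Op^w((tq_\theta)((x,\xi) - \Bff{v}))$ and $Q' = (tq_\theta)^w$. Since $\exp H_{tq_\theta} = \exp(tH_{q_\theta})$ is strictly positive by hypothesis, Theorem \ref{thm_norm_deg2} applies with quadratic form $tq_\theta$ and yields
\[
	\log G = -\tfrac{1}{2}\sigma(\Im \Bff{v}, A\Im \Bff{v}),
\]
with $A = A(\exp tH_{q_\theta})$ as in \eqref{eq_def_A_matrix}.

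The matrix $A$ is exactly the one produced in the preceding paragraph via $A = -A_1 - A_2$, namely
\[
	A = 2\left(\begin{array}{cc} 0 & \dfrac{\cos t_1 - \cosh t_2}{\sin \theta \sin t_1 - \cos \theta \sinh t_2} \\ \dfrac{\cos t_1 - \cosh t_2}{\sin \theta \sin t_1 + \cos \theta \sinh t_2} & 0\end{array}\right).
\]
Writing $\Im \Bff{v} = (\Im v_x, \Im v_\xi)$, the anti-diagonal action of $A$ gives $A \Im \Bff{v} = (A_{12}\Im v_\xi,\, A_{21}\Im v_x)$, and evaluating $\sigma((x_1,\xi_1),(x_2,\xi_2)) = \xi_1 x_2 - \xi_2 x_1$ on the pair $(\Im \Bff{v}, A\Im \Bff{v})$ is then a one-line calculation yielding
\[
	\sigma(\Im \Bff{v}, A\Im \Bff{v}) = A_{12}(\Im v_\xi)^2 - A_{21}(\Im v_x)^2.
\]
Multiplying by $-1/2$ and flipping the sign in the denominator that carries the minus sign rewrites this as the two stated terms over $\cos\theta\sinh t_2 \pm \sin\theta\sin t_1$, matching the proposition.

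There is essentially no genuine obstacle: the work has already been front-loaded into the computation of $A$ and into Theorem \ref{thm_norm_deg2}. The only bookkeeping point requiring attention is the sign flip $-(\sin\theta\sin t_1 - \cos\theta\sinh t_2) = \cos\theta\sinh t_2 - \sin\theta\sin t_1$ that turns the negative $(\Im v_\xi)^2$ coefficient into a positive one in the final expression, and the observation that both terms in the denominator retain the prefactor $\cos t_1 - \cosh t_2 \leq 0$, so that under the strict positivity hypothesis the growth factor $G$ behaves consistently with the dissipative nature of the evolution.
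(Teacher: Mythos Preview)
Your proposal is correct and follows exactly the approach the paper takes: the text immediately preceding the proposition computes $A = -A_1 - A_2$ and then states that ``Theorem \ref{thm_norm_deg2} then gives the following relatively simple expression,'' leaving the symplectic pairing implicit. You have simply written out that pairing, and your bookkeeping of the sign flip in the denominator of the $(\Im v_\xi)^2$ term is right.
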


\begin{example}\label{ex_shifted_HO}
If $\theta = 0$, then \eqref{eq_intro_norm_SHO} follows from Proposition \eqref{prop_shifted_RHO}. Furthermore, the trajectories associated with the phase-space centers $\Bff{a}_1, \Bff{a}_2$ simplify greatly. We compute that
\[
	\begin{aligned}
	\Im \Bff{K}_1 &= \Im \cos t + H_{q_0}\Im \sin t
	\\ &= \sinh t_2 ( -\sin t_1 + \cos t_1 H_{q_0})
	\\ &= \sinh t_2 H_{q_0}\exp(t_1 H_{q_0}),
	\end{aligned}
\]
and similarly,
\[
	\Re \Bff{K}_1 = \cosh t_2 \exp(t_1 H_{q_0}).
\]
Therefore, 
\[
	\begin{aligned}
	A_1 &= (\Im \Bff{K}_1)^{-1} (\Re \Bff{K}_1 - 1)
	\\ &= -\left(\coth t_2 + \frac{1}{\sinh t_2}\exp(-t_1 H_{q_0})\right)H_{q_0}.
	\end{aligned}
\]
We see that, for $t_2 < 0$ fixed, $\Bff{a}_1 = \Re \Bff{v} + A_1 \Im \Bff{v}$ traces counterclockwise circles (see Figure \ref{f_centers}) of radius $|\Im \Bff{v}|/\sinh t_2$ around the center 
\[
	\Bff{c}_1 = \Bff{c}_1(t_2) = \Re \Bff{v} - (\coth t_2) H_{q_0}\Im \Bff{v},
\]
beginning at $\Bff{c}_1 - \frac{1}{\sinh t_2} H_{q_0}\Im\Bff{v}$. Similarly, $\Bff{a}_2$ traces clockwise circles around $\Bff{c}_2 = \Re \Bff{v} - \coth t_2 H_{q_0}\Im \Bff{v}$. Because the difference $\Bff{a}_2 - \Bff{a}_1$ is always orthogonal to $\Im \Bff{v}$, the contribution to the norm (illustrated in Figure \ref{f_contours}) is simply 
\[
	\exp\left(\frac{1}{2}|\Bff{a}_2 - \Bff{a}_1|\,|\Im \Bff{v}|\right) = \exp\left(\frac{\cos t_1 - \cosh t_2}{\sinh t_2}|\Im \Bff{v}|^2\right).
\]

In addition to this geometric characterization of the norm of $\ee^{-\ii tP_b}$, we can geometrically understand return to equilibrium: as $t_2 \to -\infty$, the centers $\Bff{c}_1$ and $\Bff{c}_2$ tend exponentially quickly towards $\Bff{c}_{1,\infty} = \Re \Bff{v} - H_{q_0}\Im \Bff{v}$ and $\Bff{c}_{2,\infty}= \Re \Bff{v} + H_{q_0}\Im \Bff{v}$; the radius of the circles around these limit centers become exponentially small; the norm of the first spectral projection is the limit $\|\Pi_0\| = \ee^{(\Im \Bff{v})^2}$; and one can even find the ground states of $P^*$ and $P$ by applying shifts corresponding to $\Bff{c}_{1,\infty}$ and $\Bff{c}_{2,\infty}$ to the usual Gaussian $u(x) = \ee^{-x^2/2}$.
\end{example}

\begin{example}\label{ex_shifted_Davies}
As a concrete example of the fragility of the boundedness of the semigroup for a partially elliptic operator, consider for $(w_x, w_\xi) \in \Bbb{R}^2$ the operator
\begin{equation}\label{eq_shifted_Davies}
	P = (D_x - \ii w_\xi)^2 + \ii (x - \ii w_x)^2.
\end{equation}
Note that this is a shift of $Q$ in Example \ref{ex_Davies}; we therefore apply Proposition \ref{prop_shifted_RHO} to the shifted operator with $\theta = \pi/4$ and
\[
	t_1 = -t_2 = \frac{1}{\sqrt{2}}s.
\]
Note that
\[
	\begin{aligned}
	\cos t_1 - \cosh t_2 &= -\frac{1}{2}s^2 + \BigO(s^6)
	\\ \cos \theta \sinh t_2 + \sin \theta \sin t_1 &= -\frac{1}{12}s^3 + \BigO(s^7)
	\\ \cos \theta \sinh t_2 - \sin\theta \sin t_1 &= -s + \BigO(s^5).
	\end{aligned}
\]
We see that we have exponential blowup of $\|\ee^{-sP}\|$ as $s \to 0^+$ only insofar as the perturbation is in the $x$ direction:
\[
	\begin{aligned}
	\log \|\ee^{-sP}\| &= \frac{6}{s}(1+\BigO(s^4))w_x^2 + \frac{s}{2}(1+\BigO(s^4))w_\xi^2 + \log \|\ee^{-sQ}\|
	\\ &= \frac{6}{s}(1+\BigO(s^4))w_x^2 + \frac{s}{2}(1+\BigO(s^4))w_\xi^2 - \frac{1}{4\sqrt{3}}s^2 + \BigO(s^4).
	\end{aligned}
\]
\end{example}

\begin{figure}
  \centering
    \includegraphics[width=1\textwidth]{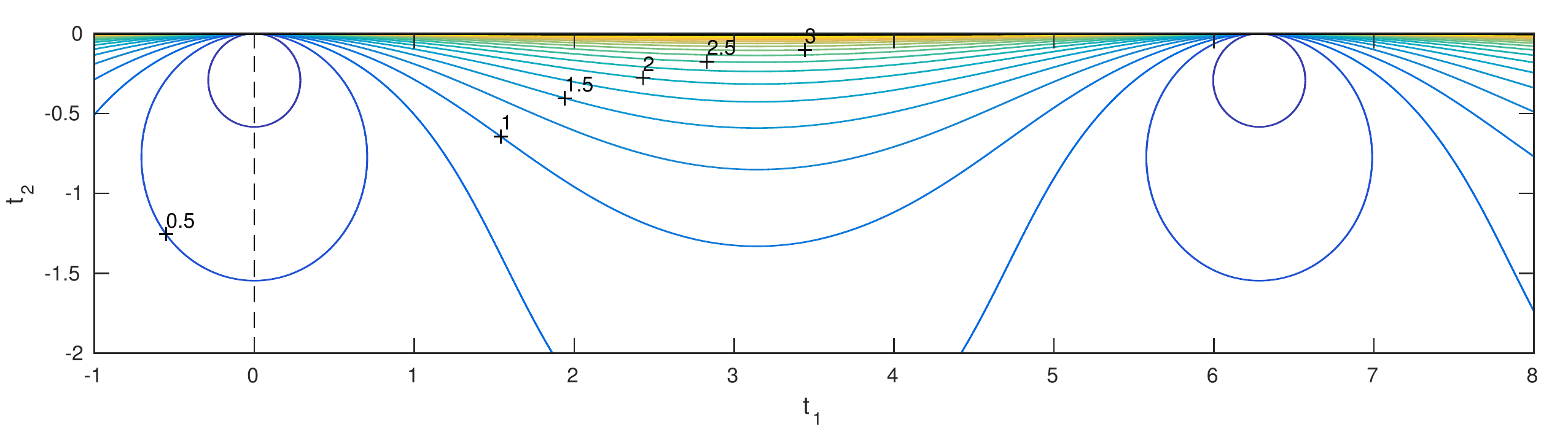}
    \includegraphics[width=1\textwidth]{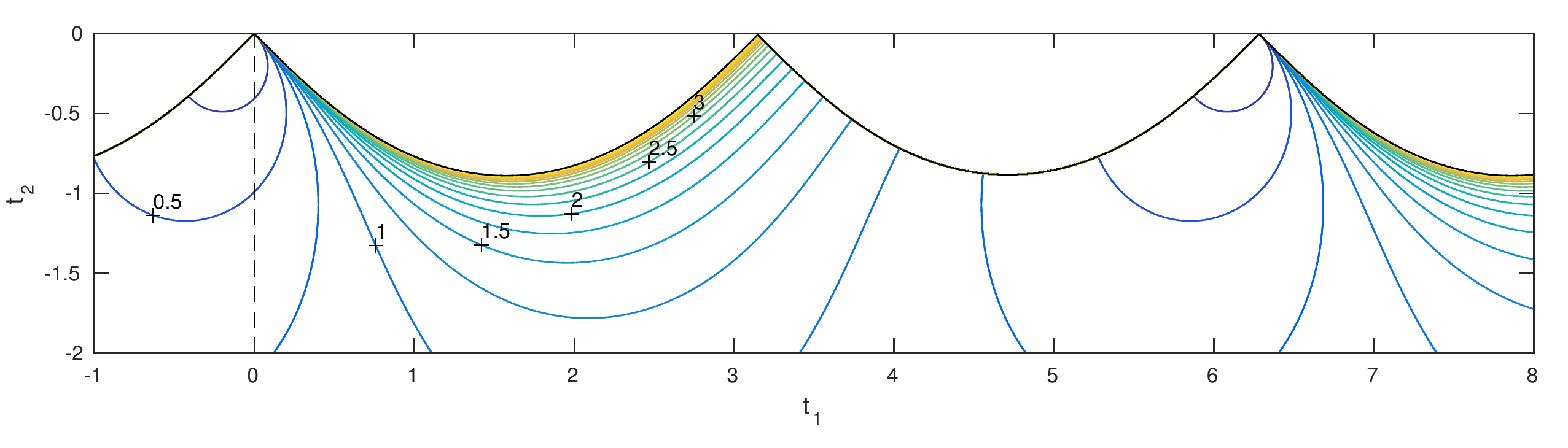}
  \caption{Contours of $\log (\log G + 1)$ for growth factor in Proposition \ref{prop_shifted_RHO} at $t = t_1 + \ii t_2$ with shift $\Bff{v} = \ii(1,0)$ and $\theta = 0$ (above) and $\pi/4$ (below).}
	\label{f_contours}
\end{figure}

\begin{figure}
  \centering
	\includegraphics[width=.4\textwidth]{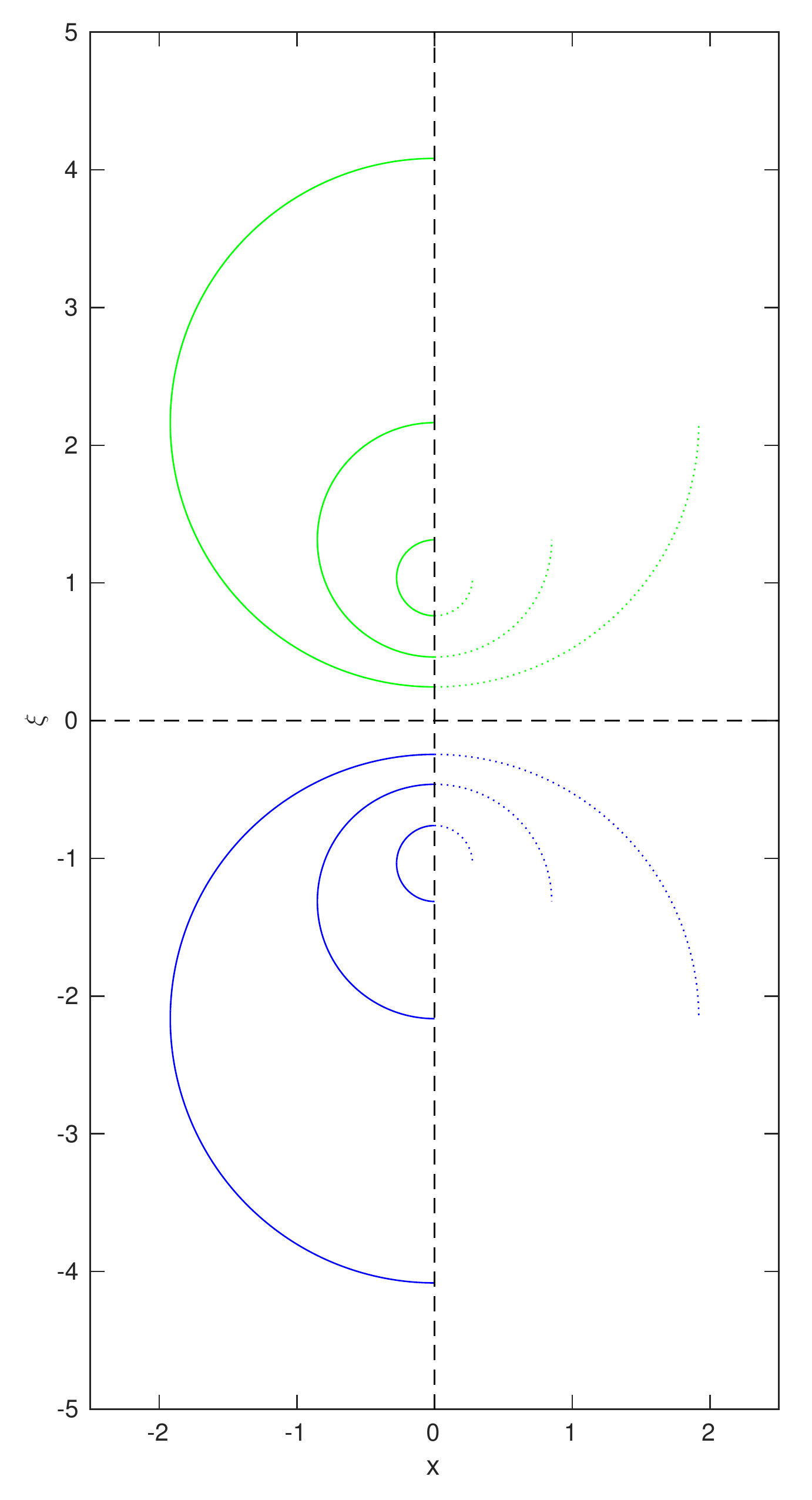}
	\includegraphics[width=.4\textwidth]{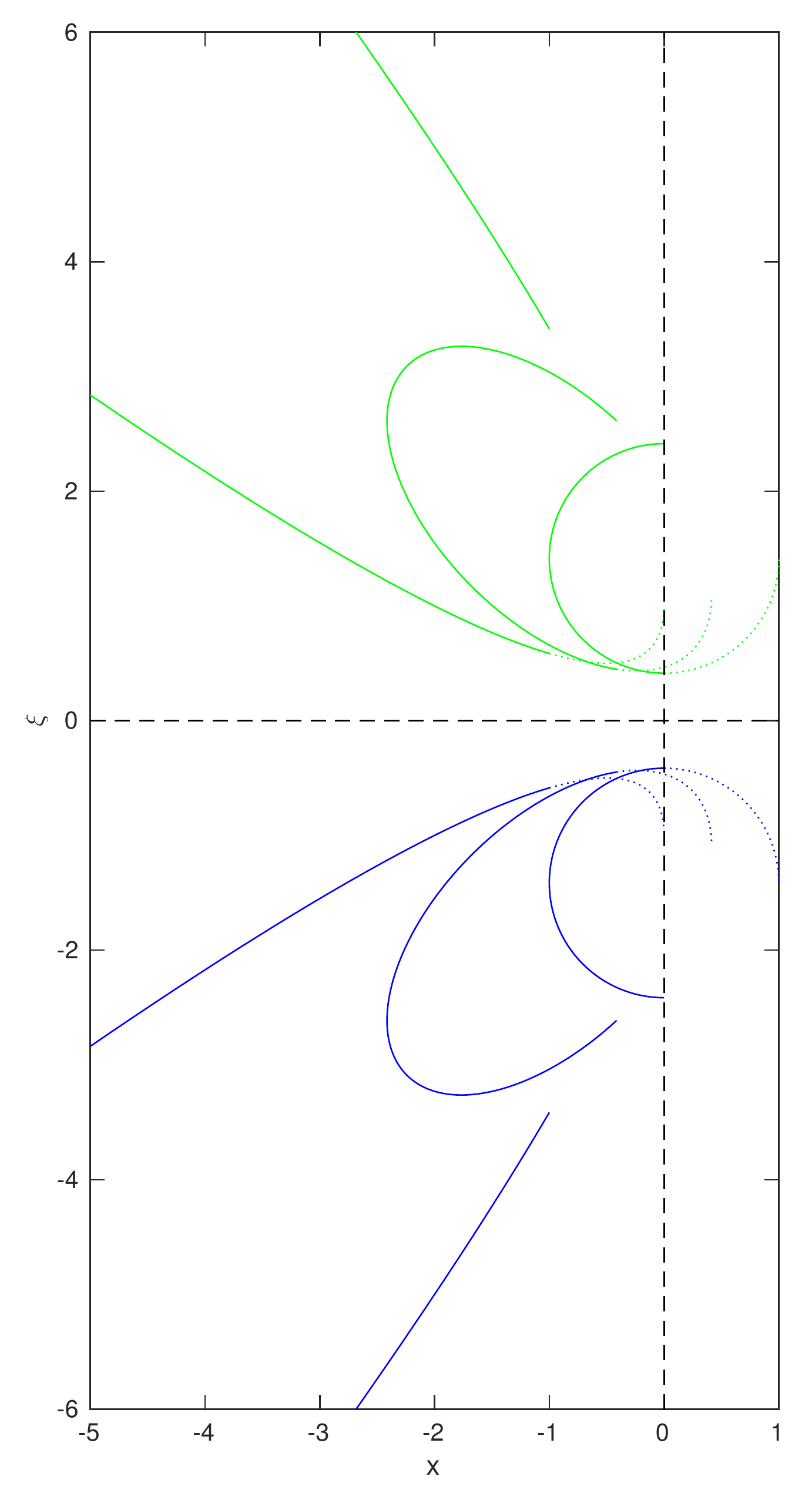}
  \caption{Paths traced by centers $\Bff{a}_1$ below and $\Bff{a}_2$ above for $P$ from \eqref{eq_shifted_Davies}, fixed $t_2$, and $t_1 \in [0, \pi]$ (solid) or $t_1 \in [-\pi/2, 0]$ (dotted). Left: $\theta = 0$ and $t_2 = 0.5, 1, 2$ outside to inside; right: $\theta = 0, \pi/8, \pi/4$ right to left and $t_2 = t_2^c(\pi/4)$ from \eqref{eq_t2_c}.}
	\label{f_centers}
\end{figure}

In Figures \ref{f_contours} and \ref{f_centers}, we illustrate this information. First, in Figure \ref{f_contours}, we draw the contours corresponding to the growth factor for either the shifted harmonic oscillator or the shifted rotated harmonic oscillator with $\pi = 4$. We see that in either case, the symmetry in $t_1$, with period $\pi$, of the norm for the rotated harmonic oscillator \cite{Viola_2016} is broken, and for the rotated harmonic oscillator we see the strong dependence of the norm on the direction in time, corresponding to the choice of a perturbation in the $x$-direction.

In Figure \ref{f_centers}, we draw the paths of $\Bff{a}_1$ and $\Bff{a}_2$ for fixed $t_2$ in various situations. To emphasize the point of departure $t_1 = 0$, we draw $0 \leq t_1 \leq \pi$ as a solid curve and $-\pi/2 \leq t_1 \leq 0$ as a dotted curve. On the left, we have the shifted harmonic oscillator $\theta = 0$. One can see both the exponential explosion of the norm as $t_2 \to 0^-$, owing to increasingly large circles, and the return to equilibrium coming from to exponentially small circles, as $t_2$ decrease. On the left, we have varying values of $\theta$, showing how dependence on the direction in phase space appears as the circle ($\theta = 0$) turns to become an ellipse and then a parabola ($\theta = \pi/4$). We have chosen the critical time
\begin{equation}\label{eq_t2_c}
	t_2^c(\theta) = -\frac{1}{2}\log\left(\frac{1+|\sin \theta|}{1-|\sin \theta|}\right)
\end{equation}
because it marks where the denominators in Proposition 5.4 can go to zero for $t_1 = \pi/2 + \pi k, k \in \Bbb{Z}$. For $t_2 < t_2^c$, fixed, the paths traced by $\Bff{a}_1$ and $\Bff{a}_2$ are bounded. At the same time, $t_2^c$ is the largest value of $t_2$ such that, for all $t_2 < t_2^c$, the operator $\ee^{-\ii(t_1 + \ii t_2)Q_\theta}$ is compact for all $t_1\in\Bbb{R}$. Third and finally, the expansion for $\ee^{-\ii(t_1 + \ii t_2)Q_\theta}$ in eigenfunctions of $Q_\theta$ converges absolutely if and only if  $t_2 < t_2^c$, \cite[Thm.~14.5.1]{Davies_2007} as well as \cite[App.~B]{Krejcirik_Siegl_Tater_Viola_2014} and the references therein.

\subsection{The Bargmann transform via a formal Mehler formula}\label{ssec_Bargmann_via_Mehler}

As a final example, we consider the Bargmann transform itself from Example \ref{ex_Bargmann}. We will see that $\mathfrak{B}_0$ may be \emph{formally} obtained as a Mehler formula along the lines of Proposition \ref{prop_supersymmetric_onto}. This suggests that the link between Hamilton flows and Schr\"odinger evolutions may be pushed far beyond the class of strictly positive Hamilton flows.

The Bargmann transform is chosen to quantize $\Bff{B}_0$ from \eqref{eq_def_B0}. Note that this canonical transformation is not strictly positive: $\overline{\Bff{B}_0}^{-1} = \Bff{B}_0$, and
\[
	\overline{\Bff{B}_0}^{-1}\Bff{B}_0 = -\ii \left(\begin{array}{cc} 0 & 1 \\ 1 & 0\end{array}\right).
\]
Therefore
\[
	\ii \Bff{J}\left(\overline{\Bff{B}_0}^{-1}\Bff{B}_0 - 1\right) = \left(\begin{array}{cc} -1 & \ii \\ -\ii & 1\end{array}\right),
\]
which would be positive definite if $\Bff{B}_0$ were positive, has spectrum $\{\pm \sqrt{2}\}$.

Nonetheless, as in Proposition \ref{prop_supersymmetric_onto}, we define a quadratic form with Hamilton map $\log \Bff{B}_0$. Recalling the harmonic oscillator symbol $q_0(x,\xi) = \frac{1}{2}(x^2 + \xi^2)$, let
\[
	\Bff{U}_0 = \exp (\frac{\pi}{4}H_{q_0}) = \frac{1}{\sqrt{2}} \left(\begin{array}{cc} 1 & 1 \\ -1 & 1\end{array}\right).
\]
This is so that
\[
	\Bff{U}_0 \Bff{B}_0 \Bff{U}_0^{-1} = \left(\begin{array}{cc} \ee^{-\ii \pi/4} & 0 \\ 0 & \ee^{\ii \pi/4}\end{array}\right).
\]
We may find $p_0$ quadratic such that $\Bff{B}_0 = \exp (\frac{\pi}{4}H_{p_0})$ by setting
\[
	H_{p_0} = \frac{4}{\pi}\log \Bff{B}_0 = \ii\Bff{U}_0^{-1} \left(\begin{array}{cc} -1 & 0 \\ 0 & 1\end{array}\right)\Bff{U}_0 = -\ii \left(\begin{array}{cc} 0 & 1 \\ 1 & 0\end{array}\right).
\]
The factor of $\pi/4$ is not essential, but seems to give a pleasant symmetry in formulas \eqref{eq_Bargmann_as_exp_reduces_HO} and \eqref{eq_HO_reduces_Bargmann_as_exp} below.
We obtain $p_0$ from its Hamilton map as
\[
	\begin{aligned}
	p_0(x,\xi) &= \frac{1}{2}\sigma((x,\xi), H_{p_0}(x,\xi))
	\\ &= \frac{\ii}{2}(x^2 - \xi^2).
	\end{aligned}
\]

Naturally, when 
\[
	P_0 = p_0^w = \frac{\ii}{2}(x^2 - D_x^2),
\]
we cannot define $\ee^{-\ii \frac{\pi}{4}P_0}$ by standard functional analysis because $\opnm{Spec}(-\ii P_0) = \Bbb{R}$. Nonetheless, we can write the Mehler formula, re-using the diagonalization of $H_{p_0}$. We begin with
\[
	\det \cosh (tH_{p_0}/2) = \det \left( \Bff{U}_0^{-1} \left(\begin{array}{cc}\cosh(-\ii t/2) & 0 \\ 0 & \cosh(\ii t/2)\end{array} \right)\Bff{U}_0\right) = \cos(t/2)^2.
\]
Since we are working formally (and the constant factor we find is different from that in Example \ref{ex_Bargmann}), we choose the positive sign for the square root in the Mehler formula.

As for the exponent, we note that
\[
	\begin{aligned}
	\frac{1}{\ii} \tanh(tH_{p_0}/2) &= \frac{1}{\ii}\Bff{U}_0^{-1}\left(\begin{array}{cc}-\tanh(\ii t/2) & 0\\ 0 & \tanh(\ii t/2)\end{array}\right)\Bff{U}_0
	\\ &= -\tan(t/2)\Bff{U}_0^{-1}\left(\begin{array}{cc} -1 & 0 \\ 0 & 1\end{array}\right)\Bff{U}_0
	\\ &= -\tan(t/2)\left(\begin{array}{cc} 0 & 1\\ 1 & 0\end{array}\right).
	\end{aligned}
\]
Therefore
\[
	\sigma((x,\xi), \frac{1}{\ii}\tanh(tH_{p_0}/2)(x,\xi)) = -\tan(t/2)(\xi^2 - x^2).
\]

For $t = \pi/4$, or even $t \in (0, \pi)$, this gives a Mehler formula
\[
	M_{tp_0}(x,\xi) = \frac{1}{\cos(t/2)^n}\exp\left(-\tan(t/2)(\xi^2 - x^2)\right),
\]
which is decaying in $\xi$ but is exponentially large as $x \to \infty$. We therefore work formally to integrate out in $\xi$ in the Weyl quantization. Writing $T = \tan(t/2)$ and using elementary trigonometric formulas,
\[
	\begin{aligned}
	M_{tp_0}^w u(x) &= \frac{(2\pi)^{-n}}{\cos(t/2)^n} \iint \ee^{\ii (x-y)\cdot \xi - T(\xi^2 - (\frac{x+y}{2})^2)}u(y)\,\dd y \,\dd \xi
	\\ &= \frac{(2\pi)^{-n}}{\cos(t/2)^n}\left(\frac{\pi}{T}\right)^{n/2}\int \ee^{\frac{1}{4}(T-\frac{1}{T})(x^2 + y^2) + \frac{1}{2}(T+\frac{1}{T})xy}\,u(y)\,\dd y
	\\ &= (\pi \sin t)^{-n/2}\int \ee^{-\frac{1}{2}(\cot t)(x^2 + y^2) + (\sec t)xy}u(y)\,\dd y
	\end{aligned}
\]
We remark that, for $y$ fixed, the kernel is integrable in $\xi$ for $t \in (0, \pi)$, but the resulting integral kernel is integrable in $y$ only when $t \in (0, \pi/2)$.

Setting $t = \pi/4$ gives, formally,
\[
	\begin{aligned}
	\ee^{\frac{\pi}{8}(x^2 - D_x^2)}u(x)  &= \ee^{-\ii \frac{\pi}{4} P_0}u(x)
	\\ &= \left(\frac{\pi}{\sqrt{2}}\right)^{-n/2}\int \ee^{-\frac{1}{2}(x^2 + y^2) + \sqrt{2}xy}u(y)\,\dd y
	\\ &= (2\pi)^{n/4}\mathfrak{B}_0u(x).
	\end{aligned}
\]
We recall that the Egorov relation for $\mathfrak{B}_0$ allows us to reduce the harmonic oscillator $Q_0$, writing, again formally, that
\begin{equation}\label{eq_Bargmann_as_exp_reduces_HO}
	\ee^{-\ii \frac{\pi}{4}P_0}Q_0 = \left(x\cdot \partial_x + \frac{n}{2}\right)\ee^{-\ii \frac{\pi}{4}P_0}.
\end{equation}
What is more, recalling that $\Bff{U}_0 = \exp(\frac{\pi}{4}H_{q_0})$ and that therefore
\[
	H_{p_0\circ \Bff{U}_0^{-1}} = \Bff{U}_0 H_{p_0}\Bff{U}_0^{-1} = \left(\begin{array}{cc} -\ii & 0 \\ 0 & -\ii \end{array}\right),
\]
it is the harmonic oscillator itself which gives a corresponding reduction for $P_0$:
\begin{equation}\label{eq_HO_reduces_Bargmann_as_exp}
	\ee^{-\ii \frac{\pi}{4}Q_0}P_0 = -\left(x\cdot \partial_x + \frac{n}{2}\right)\ee^{-\ii \frac{\pi}{4}Q_0}.
\end{equation}

\bibliographystyle{abbrv}
\bibliography{BibMehler}

\end{document}